\definecolor{refkey}{rgb}{1,0,0} 
\definecolor{labelkey}{rgb}{0,0,1}
\newcommand{\al}{\alpha}
\newcommand{\bt}{\beta}
\newcommand{\g}{\gamma}
\newcommand{\de}{\delta}
\newcommand{\e}{\varepsilon}
\newcommand{\tht}{\theta}
\newcommand{\la}{\lambda}
\newcommand{\s}{\sigma}
\newcommand{\om}{\omega}
\newcommand{\G}{\Gamma}
\newcommand{\D}{\Delta}
\newcommand{\Om}{\Omega}
\newcommand{\bbR}{\mathbb R} 
\newcommand{\bbP}{\mathbb P}
\newcommand{\bbE}{\mathbb E}
\renewcommand{\1}{\mathbbm{1}}
\newcommand{\cal}{\mathcal}
\newcommand{\cC}{\cal C}
\newcommand{\cD}{\cal D}
\newcommand{\cE}{\cal E}
\newcommand{\cF}{\cal F}
\newcommand{\cH}{\cal H}
\newcommand{\cL}{\cal L}
\newcommand{\cV}{\cal V}
\newcommand{\calc}{\mathscr}
\newcommand{\ccD}{\calc D}
\newcommand{\ccL}{\calc L}
\newcommand{\wt}{\widetilde}
\newtheorem{theorem}{Theorem}[section]
\newtheorem{lemma}[theorem]{Lemma}
\newtheorem{proposition}[theorem]{Proposition}
\newtheorem{example}{Example}
\theoremstyle{definition}
\newtheorem{definition}[theorem]{Definition}
\newtheorem{assumption}[theorem]{Assumptions}
\theoremstyle{remark}
\newtheorem{remark}{Remark}
\newcommand{\be}{\begin{equation}}
\newcommand{\ee}{\end{equation}}
\newcommand{\bea}{\be\begin{aligned}}
\newcommand{\eea}{\end{aligned}\ee}
\newcommand{\bal}{\begin{aligned}}
\newcommand{\eal}{\end{aligned}}
\newcommand \ba {\begin{array}}
\newcommand \ea {\end{array}}
\newcommand{\abs}[1]{\left\lvert{#1}\right\rvert}
\newcommand{\norm}[1]{\left\lVert{#1}\right\rVert}
\newcommand{\bra}[1]{\left\langle{#1}\right\rangle}
\newcommand{\pare}[1]{\left({#1}\right)}
\newcommand{\cro}[1]{\left[{#1}\right]}
\newcommand{\acc}[1]{\left\{{#1}\right\}}
\newcommand{\wconver}[3]{\overset{#3}{\xrightharpoonup[#1\to#2]{}}}
\newcommand{\conv}[2]{\xrightarrow[#1\to#2]{}}
\newcommand{\conver}[3]{\overset{#3}{\xrightarrow[#1\to#2]{}}}
\newcommand\grad{\nabla}
\newcommand\del{\partial}
\newcommand\dint{\mathrm{d}}
\newcommand\dx{\dint x}
\newcommand\ds{\dint s}
\newcommand\dt{\dint t}
\newcommand\dl{\dint l}
\newcommand\dm{\dint m}
\newcommand\supp{\mathrm{supp}\,}
\newcommand\barr[1]{\overline{#1}}
\newcommand{\vide}{\varnothing}
\newcommand{\seg}{\geqslant}
\newcommand{\ieg}{\leqslant}
\numberwithin{equation}{section}
\begin{document}

\title{Averaging principle for diffusion processes via Dirichlet forms}
\author{Florent Barret \and Max von Renesse}
\address{F. Barret, Universit\"at Leipzig, Fakultät für Mathematik und Informatik, Augustusplatz 10, 04109 Leipzig, Germany}
\email{barret@uni-leipzig.de}
\address{M. von Renesse, Universit\"at Leipzig, Fakultät für Mathematik und Informatik, Augustusplatz 10, 04109 Leipzig, Germany}
\email{renesse@uni-leipzig.de}


\begin{abstract}
We study diffusion processes driven by a Brownian motion with regular drift in a finite dimension setting. The drift has two components on different time scales, a fast conservative component and a slow dissipative component. Using the theory of Dirichlet form and Mosco-convergence we obtain simpler proofs, interpretations and new results of the averaging principle for such processes when we speed up the conservative component. As a result, one obtains an effective process with values in the space of connected level sets of the conserved quantities. The use of Dirichlet forms provides a simple and nice way to characterize this process and its properties.
\end{abstract}

\maketitle

\small
\noindent 
Subject classifications: 60J45, 34C29, 70K70.\\
Keywords: Averaging principle, stochastic diffusion processes, Dirichlet forms, Mosco-convergence.
\date{\today}
\normalsize

\section{Introduction}

Our aim in this article is to introduce a new method, to obtain new results and to prove averaging principles for stochastic diffusion equations. We consider standard It\^o diffusions in a finite dimensional setting with drifts on different time scale. 
\medskip

We study the diffusion equation in $\bbR^{2}$, defined for $\al,\e>0$ by
\be\label{eq:diff.25}
\dint X_t=A\grad H(X_t)\dt-\al e(X_t)\dt+\sqrt{2\al\e}\dint B_t
\ee
where $A\grad$ is the symplectic gradient. $H$ is the Hamiltonian and $e$ is the drift and plays the role of a friction term (hence the minus sign in front of this term in Equation \eqref{eq:diff.25}). On the time scale $O\pare{\frac1{\al}}$, $Y_t=X_{\frac t{\al}}$ is a solution to the following equation:
\be\label{eq:diff.2}
\dint Y_t=\frac1{\al}A\grad H(X_t)\dt- e(X_t)\dt+\sqrt{2\e}\dint B_t.
\ee
We want to consider the limit $\al\to0$, for which we obtain an averaging along the orbits of the Hamiltonian system
\be
\dot{x}(t)=A\grad H(x(t)).
\ee
In fact, one can make the observation that since $H$ is preserved by the Hamiltonian vector field, the process $H(Y_t)$ satisfies
\be
H(Y_t)=H(Y_0)+\int_0^t-\grad H(Y_s)\cdot e(Y_s)+\e\D H(Y_s)\ds
+\int_0^t\grad H(Y_s)\cdot\dint B_s
\ee
which does not depend on $\al$. Then one could expect that the law of of the process $H(Y)$ will converge. However, we cannot obtain a convergence for $H(Y)$ itself if $H$ is not injective but for a projection of $Y$ on the ``orbit space" $\G$ defined as the space of connected level sets of $H$.
Previous convergence results have been obtained, notably by Freildin and Wentzell in several papers (see the new edition of their book \cite{freidlin.wentzell12} for a review, and the previous articles \cite{freidlin.wentzell93,freidlin.wentzell94}). 
\medskip

The main novelty of this paper concerns the method we use to prove this results and that it leads to new, more general results. Freidlin and Wentzell prove their results using the martingale formulation of the diffusion $H(Y)$. As $\al$ goes to $0$, they prove the convergence of the martingale problem and deduce the unicity of the limit and the equation satisfied by the solution. Existence is proven via tightness of the laws.

This paper is a nice application of the theory of convergence of Dirichlet form. We rely on the theory of non-symmetric Dirichlet forms (as exposed in the book \cite{ma.rockner92} by Ma and R\"ockner). We prove convergence, in a suitable sense, of a sequence of Dirichlet forms associated to the process defined in Equation \eqref{eq:diff.25}. As a consequence, we obtain convergence of the resolvants and therefore of the associated semigroups. In probabilistic terms, we prove convergence of the finite dimensional marginals of the process. Associated to the tightness of the sequence of laws, this proves the convergence in law.

We use results by Hino \cite{hino98}, and T\"olle \cite{toelle06} about the convergence of non-symmetric Dirichlet forms. Note that in our case, the functional space also changes along the convergence, and we refer to Kolesnikov \cite{kolesnikov05} who investigated such cases.
We refer also to Kuwae and Shioya \cite{kuwae.shioya03} for a quite complete exposition of spectral structures and their relations (semigroups, resolvants, Dirichlet forms and generators). 

The use of Dirichlet forms is very quick and simple, despite giving an indirect description of the limiting process (or its infinitesimal generator). However, with the use of the coarea Formula, we can recover the infinitesimal generator and its domain, therefore achieve a complete description of the process. 

We assume that we can define a nice Dirichlet form with a reference measure which is preserved under the accelerated flow. The other assumptions we make are quite general and could probably be greatly relaxed.

\medskip 

In this paper, we also prove convergence of the Dirichlet forms for higher dimensional diffusions on $\bbR^n$ defined by
\be
\dint Y_t=\frac1{\al}v(Y_t)\dt+u(Y_t)\dt+\sqrt{2\e}\s(Y_t)\dint B_t.
\ee
We consider the limit of the Dirichlet forms, as $\al$ goes to $0$, for the process $G(Y)$ where $G$ is a $\bbR^m$-valued function (with $m<n$) conserved along the flow of the vector field $v$. We only sketch the computation of the limiting infinitesimal generator since a complete computation would need a very detailed description of the space $\G$ (defined as the quotient space for the connected level sets of $G$) which is beyond the scope of this article.

Averaging principles for higher dimensional diffusions have been investigated in several papers. In \cite{freidlin.wentzell04}, Freidlin and Wentzell show an averaging principle for a diffusion where the fast component concerns only the first two coordinates. In \cite{freidlin.weber04} by Freidlin and Weber, the fast component contains a Brownian term and there is only one first integral ($m=1$ in our notation). In \cite{freidlin.weber01}, the same authors investigate the perturbation of an Hamiltonian system with only one first integral but draws conclusions on the PDE counterpart of the averaging.

\medskip 

We give now more details for the Hamiltonian two-dimensional case. We choose a reference measure $\mu$ and define the (pre-)Dirichlet form, denoted  $E_\al$, associated to the infinitesimal generator, denoted $L_\al$, of the diffusion given by Equation \ref{eq:diff.2} in $L^2(\mu)$: for $f,g$ two $C^2$ functions with compact support
\be
E_\al(f,g)=-\bra{L_\al f,g}_{L^2(\mu)}=-\int_{\bbR^2}L_\al fg\dint\mu.
\ee
Under suitable conditions on $\mu$, $E_\al$ is a Dirichlet form and characterizes completely the infinitesimal generator $L_\al$.

The projected Dirichlet form, denoted $\cE$, is constructed by restricting the set of test functions : we consider functions constant on connected level sets of $H$. We do it rigorously by considering the connected level sets of $H$ as equivalence classes. We denote $\G$ the quotient set, and $\pi$ the canonical projection onto $\G$. Naturally, we can associate a $L^2(\G)$ function space to the space $\G$ (containing functions $f$ defined on $\G$ such that $f\circ\pi$ is in $L^2(\mu)$). The projected Dirichlet form $\cE$ is defined as: for $f,g$
\be\label{eq:intro.0}
\cE(f,g)=E_\al(f\circ\pi,g\circ\pi).
\ee
Due to the choice of $\mu$, we prove, and this is the most important remark, that $\cE$ does not depend on $\al$ and is, in itself, a nice Dirichlet form on $L^2(\G)$.

Moreover, we prove that, in a Mosco-convergence sense, the sequence of Dirichlet forms $E_\al$ and their domains, converges to $\cE$. The convergence in law of the process follows by using the tightness.

\medskip

The convergence of Dirichlet forms is quite abstract but powerful since it can be applied to very general cases (see Section \ref{sec:general}). However, in order to have a more intuitive description of the limiting process, we have to write the infinitesimal operator $\cL$ (and its domain) associated to the Dirichlet form as 
\be\label{eq:intro.1}
\cE(f,g)=-\bra{\cL f,g}_{L^2(\G)}=-\int_{\bbR^2}(\cL f)\circ\pi g\circ\pi\dint \mu.
\ee

To this aim, we need a better understanding of the space $\G$, it can be easily done in the Hamiltonian case on $\bbR^2$, but it is much more involved in higher dimensions. 

In $\bbR^2$, the space $\G$ is a graph with vertices and edges, on each edge the averaging process is a classical diffusion whose drift and diffusion coefficients could be easily computed. However, at a vertex (a point gluing together several edges), we obtain a so-called gluing condition giving a restriction on the domain of the operator and therefore on the behavior of the process when (or if) it reaches this vertex. 

The limiting diffusion is therefore a process on a graph and is described by 
\begin{itemize}
\item an infinitesimal generator on each edge (a second order differential operator);
\item a gluing condition at each vertex.
\end{itemize}
Analysis of such processes could be done based on one-dimensional diffusions (see Feller \cite{feller54} or Mandl \cite{mandl68}), we also cite the work by Kant, Klauss, Voigt and Weber \cite{KKVW09} which investigates such processes from a Dirichlet-form point of view and more recently the work of Kostrykin, Potthoff and Schrader \cite{KPS12}. Large deviations for diffusions process on graphs have been proven in \cite{freidlin.sheu00}.

The infinitesimal generator is solution of Equation \eqref{eq:intro.1}. We compute the infinitesimal generator in two steps: 
\begin{enumerate}
\item using the coarea Formula, we compute the measure $\pi_*\mu$ on $\G$ (projection of $\mu$ on $\G$) and we write the Dirichlet form as an integral on $\G$;
\item we use a integration by part on each edge to transfer the derivatives on $g$ to $f$.
\end{enumerate}
The identification of the two sides of Equation \eqref{eq:intro.1} is made separately on the edges and the vertices on $\G$. On the edges, we obtain a second-order differential operator:
\be
\cL u=au''+bu'
\ee
whereas at a vertex $O$, we obtain a gluing relation:
\be
-\bt\cL u(O)=\g u(O)+\sum_{i}\al_iD_iu(O)
\ee
where $\g$, $\bt$, $\al_i$ are constants, the sum is made on all edges incident to the vertex $i$ and $D_iu$ is the one-sided derivative of $u$ at $O$ along the edge $i$.

\medskip 

The remaining part of this paper is organized as follows. In Section \ref{sec:2d} we present the Hamiltonian case in $\bbR^2$, the main assumptions, we define the Dirichlet form $E_\al$ and the orbit space $\G$. In Section \ref{sec:conv}, we prove the convergence in law of the projected process to a limiting process defined by the projected Dirichlet form $\cE$. In Section \ref{sec:iden}, we compute the infinitesimal generator and draw some consequences about the behavior of the limiting process on $\G$. 
Lastly, in the last section (Section \ref{sec:general}), we generalize our method to diffusions in arbitrary dimension.

\paragraph{\textbf{Acknowledgements}} We thank an anonymous referee for pointing out the fact that condition \eqref{eq:supermedian} of Assumption \ref{ass.2} could probably be relaxed to $\grad\cdot(hF)\ieg c$ for some positive constant $c$. In this case, one should work with lower bounded semi-Dirichlet forms (see e.g. \cite{oshima13}). However, we ask for condition \eqref{eq:supermedian} in order to work with simple Dirichlet forms (and thus simplify the Mosco-convergence results).

\section{Two dimensional case with additive noise}\label{sec:2d}
\subsection{Properties of the process}

We consider the solution $(Y_t)$ of equation \eqref{eq:diff.2}. 
\begin{assumption}\label{ass.1}
We assume that $e$ is a $C^1$ bounded vector field. We suppose that $H$ is $C^2$, bounded from below and has compact level sets. We also assume that $H$ has bounded second derivatives.
\end{assumption}

\begin{remark}
The assumption on $e$ (and the fact that $H$ must have bounded second derivatives) ensures the existence of a solution $Y$ to Equation \eqref{eq:diff.2}.
The boundedness of the second derivative of $H$ is a technical assumption (also made in \cite{freidlin.wentzell12}) which ensures an easy proof of the tightness of the process.
\end{remark}

We follow Ma-Rockner \cite{ma.rockner92}. The Dirichlet form is defined through the infinitesimal operator $L_\al$, which is a closed, densely defined operator. We define this operator and its domain $\cD(L_\al)$ via the transition semigroup of Equation \eqref{eq:diff.2}:
\begin{align}
\cD(L_\al)
&=\acc{f\in C_b(\bbR^2), t^{-1}(\bbE_{\cdot}[f(Y_t)]-f(\cdot)) \text{ converges uniformly as $t\to0$}}\\
L_{\al}f
&=\frac1{\al}A\grad H\cdot\grad f- e\cdot\grad f+\e\D f\text{ for $f\in C^2_c(\bbR^2)$.}
\end{align}
We consider a measure $\dint \mu=h(x)\dx$, where $h$ is $C^2$ and strictly positive.
We define the Hilbert spaces $L^2(\mu)$ and $H^1(\mu)$ as the weighted $L^2$ and $H^1$ sets with their scalar products $\bra{\cdot,\cdot}_{L^2(\mu)}$ and $\bra{\cdot,\cdot}_{H^1(\mu)}$:
\begin{align}
L^2(\mu)&=\acc{f, \bra{f,f}_{L^2(\mu)}=\int f^2\dint \mu<+\infty}\\
H^1(\mu)&=\acc{f, \bra{f,f}_{H^1(\mu)}=\int f^2+\abs{\grad f}^2\dint \mu<+\infty}.
\end{align}

Let us define the vector field $F$ as
\be\label{eq:F}
F=e+\frac{\e}{h}\grad h.
\ee

We have first the following Lemma
\begin{lemma}
Let us consider the bilinear form, for $f,g\in\cD\cap C_c(\bbR^2)$
\begin{align}
E_{\al}(f,g)&=-\bra{L_\al f,g}_{L^2(\mu)}
\end{align}
The bilinear form $E_\al$ can be uniquely decomposed in two parts: one symmetric, $E^s_{\al}$, and one antisymmetric, $E^a_{\al}$:
\begin{align}\nonumber
E_{\al}(f,g)
&=E^s_{\al}(f,g)+E^a_{\al}(f,g)\\\label{eq:DFsym}
E^s_{\al}(f,g)
&=\e\int\grad f\cdot\grad g\dint \mu
-\frac12\int \frac1h \grad\cdot(hF) fg\dint \mu
+\frac1{2\al}\int\frac1h\grad\cdot(hA\grad H)fg\dint \mu\\\label{eq:DFantisym}
E^a_{\al}(f,g)
&=-\frac1{2\al}\int_{\bbR^2}A\grad H\cdot\cro{g\grad f-f \grad g}\dint \mu
+\frac12\int F\cdot \cro{\grad f g-\grad g f}\dint \mu.
\end{align}
\end{lemma}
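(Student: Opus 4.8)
The plan is to compute $E_\al(f,g)$ explicitly from the differential expression for $L_\al$ and then apply the elementary fact that every bilinear form splits uniquely into a symmetric and an antisymmetric part. First I would write
\be
E_\al(f,g)=-\int_{\bbR^2}\pare{\frac1\al A\grad H\cdot\grad f-e\cdot\grad f+\e\D f}\,g\,h\dx
\ee
and treat the three summands separately. The first two are already first order in $f$; the Laplacian term is the only one needing manipulation. Integrating by parts (the boundary terms vanish since $f,g$ have compact support),
\be
-\e\int(\D f)\,g\,h\dx=\e\int\grad f\cdot\grad(gh)\dx=\e\int\grad f\cdot\grad g\dint\mu+\int\frac\e h\grad h\cdot\grad f\,g\dint\mu .
\ee
Adding the last integral to $\int e\cdot\grad f\,g\dint\mu$ and using the definition \eqref{eq:F} of $F=e+\frac\e h\grad h$, I arrive at
\be\label{eq:lem1.mid}
E_\al(f,g)=\e\int\grad f\cdot\grad g\dint\mu+\int(F\cdot\grad f)\,g\dint\mu-\frac1\al\int(A\grad H\cdot\grad f)\,g\dint\mu .
\ee

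Next I would invoke that any bilinear form $B$ decomposes uniquely as $B=B^s+B^a$ with $B^s(f,g)=\frac12\pare{B(f,g)+B(g,f)}$ and $B^a(f,g)=\frac12\pare{B(f,g)-B(g,f)}$, which settles both existence and uniqueness in the statement. The term $\e\int\grad f\cdot\grad g\dint\mu$ is symmetric, so it contributes only to $E^s_\al$. For a generic first-order term $\int(V\cdot\grad f)\,g\dint\mu$ with $V$ a $C^1$ vector field, one computes the symmetric part by a second integration by parts, $\int(V\cdot\grad f)\,g\dint\mu+\int(V\cdot\grad g)\,f\dint\mu=\int V\cdot\grad(fg)\dint\mu=-\int\frac1h\grad\cdot(hV)\,fg\dint\mu$, so its symmetric part equals $-\frac12\int\frac1h\grad\cdot(hV)\,fg\dint\mu$ and its antisymmetric part equals $\frac12\int V\cdot\cro{g\grad f-f\grad g}\dint\mu$. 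Applying this once with $V=F$ and once with $V=-\frac1\al A\grad H$ in \eqref{eq:lem1.mid}, and collecting the contributions, gives exactly \eqref{eq:DFsym} and \eqref{eq:DFantisym}.

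The only genuine point to check — and what I would regard as the main (mild) obstacle — is the legitimacy of the two integrations by parts. Here Assumption \ref{ass.1} together with $h\in C^2$ is what is used: $hF=he+\e\grad h$ is $C^1$ and $hA\grad H$ is $C^1$, so $\grad\cdot(hF)$ and $\grad\cdot(hA\grad H)$ are well-defined continuous functions, hence the integrands in \eqref{eq:DFsym} make sense, and all boundary terms vanish because $f,g$ are compactly supported (with enough regularity for $L_\al f$ to be the stated differential expression, i.e.\ $f\in C^2_c(\bbR^2)$). One can additionally note that $\grad\cdot(A\grad H)=0$, hence $\grad\cdot(hA\grad H)=\grad h\cdot A\grad H$, although this identity is not needed for the lemma itself.
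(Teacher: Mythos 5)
Your proof is correct and follows essentially the same route as the paper: the same two integration-by-parts identities (one for the Laplacian term, one for a generic first-order term $\int (V\cdot\grad f)\,g\dint\mu$ applied with $V=F$ and $V=-\frac1\al A\grad H$), combined with the standard unique symmetric/antisymmetric splitting. The only (minor) difference is that the paper adds a final density argument to pass from smooth compactly supported functions to all of $\cD\cap C_c(\bbR^2)$, whereas you work directly with $f,g\in C^2_c(\bbR^2)$.
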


\begin{proof}
The lemma follows from an integration by parts.
For $f,g\in C^2_c(\bbR^2)$, we get
\be\label{eq:int.1}
\int \D f g\dint\mu =-\int \grad f \cdot \grad (g h)\dx 
=-\int \grad f\cdot\grad g\dint \mu-\int \frac1h\grad h\cdot \grad f g\dint \mu.
\ee
Also, for any regular vector field $G\in C^2(\bbR^2, \bbR^2)$, we have
\begin{align}\label{eq:int.2}
\int G\cdot \grad f g\dint \mu
&=\frac12\int G\cdot (\grad f g-\grad g f)\dint \mu-\frac12\int\frac1h\grad \cdot (hG)fg\dint \mu.
\end{align}

Using Equations \eqref{eq:int.1} and \eqref{eq:int.2}, we obtain Equations \eqref{eq:DFsym} and \eqref{eq:DFantisym} for $f,g$ sufficiently regular with compact support. By density of smooth functions in $\cD\cap C_c(\bbR^2)$, we get the lemma.
\end{proof}

Using Ma-Rockner, we will extend the bilinear form $E_\al$ as a Dirichlet form. We denote
\begin{align}\label{eq:N1}
E^1_{\al}(f,g)&=E_\al(f,g)+\bra{f,g}_{L^2(\mu)}\\\label{eq:N2}
E^{s,1}_{\al}(f,g)&=E^s_\al(f,g)+\bra{f,g}_{L^2(\mu)}
\end{align}
Let us recall (\cite{ma.rockner92} Definition 4.5 p.34) that the bilinear form $E_\al$ with a domain $\ccD$ dense in $L^2(\mu)$ is a Dirichlet form if:
\begin{itemize}
\item $E^{s}_\al$ is positive definite on $\ccD$;
\item $(E_\al,\ccD)$ is closed (i.e. $\ccD$ is closed and complete w.r.t. $E^{s,1}_\al$, or equivalently $(\ccD,E^{s,1}_\al)$ is a Hilbert space);
\item $(E_\al,\ccD)$ is coercive i.e. there exists $K>0$ such that for all $f,g\in\ccD$
\be
\abs{E^1_\al(f,g)}\ieg  K E^{s,1}_\al(f,f)^{1/2} E^{s,1}_\al(g,g)^{1/2};
\ee
\item for all $u\in\ccD$, we have the contraction properties:
\begin{align}\label{eq:contractions}
\min(u_+,1)\in \ccD
&&E_{\al}(u+\min(u_+,1),u-\min(u_+,1))&\seg0\\\nonumber
&&E_{\al}(u-\min(u_+,1),u+\min(u_+,1))&\seg0
\end{align}
where $u_+=\max(u,0)$ is the positive part of $u$.
\end{itemize}
A Dirichlet form is said to satisfy the local property if for any $f,g\in\ccD$ such that $\supp f\cap \supp g=\vide$, $E_\al(f,g)=0$. The Dirichlet form is said to be regular if $C_c\cap\ccD$ is dense in $(\ccD,E^{s,1}_\al)$ and dense in $C_c$ with respect to the uniform norm.

We complete our set of assumptions.
\begin{assumption}\label{ass.2}
We assume that there exists a function $h$ constant on connected level sets of $h$ such that 
\be\label{eq:supermedian}
\grad\cdot(hF)\ieg 0
\ee
i.e. the $h$-divergence of $F$ is non positive ($F$ is defined by Equation \eqref{eq:F}).
We also assume that the vector field $F$ is bounded on $\bbR^2$, and that its divergence (with respect to $h$, i.e. $h^{-1}\grad \cdot(hF)$) is also bounded.
\end{assumption}

We make several remarks on these assumptions.

\begin{remark}
These assumptions allow us to extend the bilinear form $E_\al$ as a Dirichlet form. We need the condition \eqref{eq:supermedian} to ensure that $E^{s,1}_\al$ is a positive bilinear form. The condition that $h$ is constant on connected level sets ensures us that the symmetric part \eqref{eq:DFsym} does not depend on $\al$. Lastly, the symmetric form $E^{s,1}_\al$ is a norm equivalent to the $H^1(\mu)$ norm thanks to the assumption that $F$ and its $h$-divergence are bounded.
\end{remark}

\begin{remark}
Note that Condition \eqref{eq:supermedian} simplifies in
\be
\e\D h+\grad\cdot(he)=\e\D h+\grad h\cdot e+h\grad\cdot e\ieg 0.
\ee
Therefore if $\grad\cdot e\ieg 0$, we can choose $h$ to be constant and then $\mu$ can be the Lebesgue measure. 
Moreover, our assumption implies a necessary condition on $e$. In fact, we ask for $h$ to be constant on connected level set, therefore, on the interior of $\acc{\grad H=0}$, $e$ must satisfies $\grad\cdot e\ieg 0$.
Finally, note that if $e=\grad f$ where $f$ is a real-valued potential constant on connected level sets of $H$, one can choose the Gibbs measure i.e. $h=e^{-f/\e}$.
\end{remark}

\begin{remark}
Let us also remark that the adjoint $L^*_\al$ (w.r.t. the usual scalar product in $L^2$) of $L_\al$ is, for $f\in C^2_c$
\be
L^*_\al f=\e\D f+\grad\cdot(f e)-\frac1{\al}\grad\cdot(fA\grad H).
\ee
Thus the fact that $\grad h\cdot A\grad H=0$ (since $h$ is constant on connected level sets) and the condition \eqref{eq:supermedian} implies that
\be
L^*_\al h
=\e\D h+\grad\cdot(h e)-\frac1{\al}\grad\cdot(hA\grad H)
=\e\D h+\grad\cdot(h e)=\grad\cdot(hF)\ieg0.
\ee
Therefore $h$ must be the density of a supermedian measure for the transitions probabilities of Equation \eqref{eq:diff.2} (cf. \cite{ma.rockner92} p.62) i.e. for any $t\seg 0$, and $f\in C_b(\bbR^2)$
\be
\bbE_\mu[f(Y_t)]=\int \bbE_x[f(Y_t)]\dint \mu(x)\ieg\int f\dint \mu.
\ee
Note that this is precisely the condition needed in order to define a Dirichlet form from a diffusion process (\cite{ma.rockner92} Section IV.2 p.92).
\end{remark}

\begin{proposition}
Under the set of assumptions \ref{ass.1} and \ref{ass.2}, $E_{\al}$ is coercive, closable and can be extended into a Dirichlet form, also denoted $E_{\al}$, on $L^2(\mu)$ with domain $\ccD(E_\al)=H^1(\mu)$ defined as the completion in $L^2(\mu)$ of $\ccD(L_\al)$.
Moreover $E_\al$ is regular and has the local property. The Dirichlet form is then $E_{\al}(f,g)=E^s_{\al}(f,g)+E^a_{\al}(f,g)$
\begin{align}\label{eq:DFsym.2}
E^s_{\al}(f,g)
&=\e\int\grad f\cdot\grad g\dint \mu
-\frac12\int \frac1h \grad\cdot(hF) fg\dint \mu\\\label{eq:DFantisym.2}
E^a_{\al}(f,g)
&=-\frac1{2\al}\int_{\bbR^2}A\grad H\cdot\cro{g\grad f-f \grad g}\dint \mu
+\frac12\int F\cdot \cro{\grad f g-\grad g f}\dint \mu.
\end{align}
\end{proposition}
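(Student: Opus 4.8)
The plan is to verify, one at a time, the defining properties (from \cite{ma.rockner92}, Def.~4.5) of a coercive closed Dirichlet form for the bilinear form $E_\al$ on $\ccD(L_\al)\cap C_c(\bbR^2)$, and then take the closure. The starting observation is that the symmetric part simplifies and is $\al$-independent: one shows $\grad\cdot(hA\grad H)=0$. Indeed $\grad\cdot(hA\grad H)=h\,\grad\cdot(A\grad H)+\grad h\cdot A\grad H$; the first term vanishes because $A$ is antisymmetric (the two mixed second partials of $H$ cancel), and the second vanishes because $h$ is constant on connected level sets of $H$, so $\grad h$ is proportional to $\grad H$ on $\{\grad H\neq0\}$ while $\grad H\cdot A\grad H=0$ again by antisymmetry of $A$, and on the closed set $\{\grad H=0\}$ both factors vanish. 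Hence the $\tfrac1{2\al}$-term in \eqref{eq:DFsym} drops out and we obtain \eqref{eq:DFsym.2}, while \eqref{eq:DFantisym.2} is literally \eqref{eq:DFantisym}.

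Next I would establish positivity and the equivalence of norms. From \eqref{eq:N2} and \eqref{eq:DFsym.2},
\[
E^{s,1}_\al(f,f)=\e\int|\grad f|^2\dint\mu+\tfrac12\int\Big(-\tfrac1h\grad\cdot(hF)\Big)f^2\dint\mu+\int f^2\dint\mu ,
\]
where by \eqref{eq:supermedian} the middle term is $\seg0$ and by boundedness of $h^{-1}\grad\cdot(hF)$ it is $\ieg C\int f^2\dint\mu$; therefore $\min(\e,1)\norm{f}_{H^1(\mu)}^2\ieg E^{s,1}_\al(f,f)\ieg\max(\e,1+C/2)\norm{f}_{H^1(\mu)}^2$, so $E^s_\al$ is positive definite and $E^{s,1}_\al$ is a norm equivalent to the $H^1(\mu)$-norm. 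Since $C^\infty_c(\bbR^2)\subset\ccD(L_\al)$ is dense in $H^1(\mu)$ (smooth strictly positive density $h$), the $E^{s,1}_\al$-completion of $\ccD(L_\al)$ is exactly $H^1(\mu)$, and closability of the positive symmetric form $E^s_\al$ is immediate: if $f_n\to0$ in $L^2(\mu)$ and $(f_n)$ is $E^{s,1}_\al$-Cauchy, it is $H^1(\mu)$-Cauchy, hence converges in $H^1(\mu)$ to a limit that must be $0$, so $E^{s,1}_\al(f_n,f_n)\to0$.

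For coercivity I write $E^1_\al=E^{s,1}_\al+E^a_\al$; Cauchy--Schwarz controls $E^{s,1}_\al$, and the $F$-part of $E^a_\al$ is bounded by $\norm{F}_\infty\norm{f}_{H^1(\mu)}\norm{g}_{H^1(\mu)}$ using boundedness of $F$. The remaining piece $-\tfrac1{2\al}\int A\grad H\cdot(g\grad f-f\grad g)\dint\mu$ is the delicate one: since $A\grad H$ only grows linearly, one must use the structural identity $\grad\cdot(hA\grad H)=0$ from the first step to rewrite it by integration by parts and then control it by $E^{s,1}_\al(f,f)^{1/2}E^{s,1}_\al(g,g)^{1/2}$, obtaining a constant $K=K(\al)$; the weak sector condition then holds, and closability of the non-symmetric $E_\al$ follows from closability of $E^s_\al$ together with the sector condition. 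For the contraction property \eqref{eq:contractions}, put $v=\min(u_+,1)\in H^1(\mu)$ so that $\grad v=\1_{\{0<u<1\}}\grad u$ and $v\grad u-u\grad v=\grad\big((u-1)_+\big)$ a.e.; expanding by (anti)symmetry gives $E_\al(u\pm v,u\mp v)=\big(E^s_\al(u,u)-E^s_\al(v,v)\big)\mp2E^a_\al(u,v)$. Writing $E^a_\al(u,v)=\tfrac12\int b\cdot(v\grad u-u\grad v)\dint\mu$ with $b=F-\tfrac1\al A\grad H$ and noting $\grad\cdot(hb)=\grad\cdot(hF)$, integration by parts yields $2E^a_\al(u,v)=\int(u-1)_+\big(-\tfrac1h\grad\cdot(hF)\big)\dint\mu\seg0$; and from $|\grad v|\ieg|\grad u|$, $v^2\ieg u^2$ and $-\tfrac1h\grad\cdot(hF)\seg0$ one checks directly $E^s_\al(u,u)-E^s_\al(v,v)\seg2E^a_\al(u,v)\seg0$, which gives both inequalities in \eqref{eq:contractions}.

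Finally, locality is clear because when $\supp f\cap\supp g=\vide$ every integrand appearing in $E_\al(f,g)$ contains a factor $fg$, $g\grad f$ or $f\grad g$ supported in the empty set; and regularity holds because $C_c\cap H^1(\mu)\supseteq C^\infty_c(\bbR^2)$ is $E^{s,1}_\al$-dense in $H^1(\mu)$ (smooth density) and uniformly dense in $C_c(\bbR^2)$. I expect the main obstacle to be precisely the weak sector condition for the symplectic term $\tfrac1\al\int A\grad H\cdot(g\grad f-f\grad g)\dint\mu$: it is the only place where the unboundedness of $\grad H$ bites, so the boundedness hypotheses on $F$ and the cancellation $\grad\cdot(hA\grad H)=0$ (hence also $\grad h\cdot A\grad H=0$) must be exploited with care, and the whole extension of $E_\al$ to a coercive Dirichlet form on $H^1(\mu)$ hinges on that estimate; the contraction property, by contrast, works cleanly because the symplectic term is divergence-free for $\mu$ and so disappears from $2E^a_\al(u,v)$.
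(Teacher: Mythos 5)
Most of your steps are sound and in places more explicit than the paper itself: your direct verification of $\grad\cdot(hA\grad H)=0$ (the paper only records this in a later remark), the norm equivalence $E^{s,1}_\al\simeq\norm{\cdot}_{H^1(\mu)}$, closability of the symmetric part, and your hands-on check of the contraction property via $v\grad u-u\grad v=\grad\big((u-1)_+\big)$ and the pointwise bound $\tfrac12(u^2-v^2)\seg (u-1)_+$ are all correct; the paper handles the contraction step by citing Ma--R\"ockner II.2.d, so there you genuinely replace a citation by an argument.

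The genuine gap is exactly at the point you flag but do not resolve: the weak sector condition (coercivity) for the term $\tfrac1{2\al}\int A\grad H\cdot(g\grad f-f\grad g)\dint\mu$. Your proposal says that after an integration by parts using $\grad\cdot(hA\grad H)=0$ one can ``control it by $E^{s,1}_\al(f,f)^{1/2}E^{s,1}_\al(g,g)^{1/2}$ with a constant $K(\al)$'', but the integration by parts only converts it into $\tfrac1{\al}\int (A\grad H\cdot\grad f)\,g\dint\mu$, and Assumptions \ref{ass.1}--\ref{ass.2} bound the \emph{second} derivatives of $H$ and the field $F$, not $\grad H$ itself; no Cauchy--Schwarz closes the estimate when $\abs{\grad H}$ is unbounded. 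Concretely, with $H=r^2/2$, $e\equiv0$, $h\equiv1$ (all assumptions satisfied), taking $f=\phi(r-R)\sin(R\theta)$ and $g=\phi(r-R)\cos(R\theta)$ for a fixed bump $\phi$ makes the ratio $\abs{E^a_\al(f,g)}/\big(E^{s,1}_\al(f,f)^{1/2}E^{s,1}_\al(g,g)^{1/2}\big)$ grow like $R$, so the bound you assert cannot hold for any constant $K(\al)$; note that the same unboundedness already threatens the absolute convergence of \eqref{eq:DFantisym.2} on all of $H^1(\mu)$, and also the extension of your contraction argument from $C_c$ to the closure. The paper's own proof does not supply this estimate either --- it simply invokes Ma--R\"ockner Theorem I.2.15, whose hypotheses include precisely this sector condition --- but since your plan is to \emph{verify} the defining properties one by one, the missing estimate is the crux, and as described your route for it would fail; completing the proposition requires either an additional hypothesis (e.g.\ $\grad H$ bounded, or decay of $h$ compensating $\abs{\grad H}$) or a genuinely different treatment of the antisymmetric term.
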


\begin{proof}
The domain $\ccD(E_\al)$ is defined as the completion of $\ccD(L_\al)$ with respect to the positive bilinear form $E_\al^{s,1}$ and it defines a coercive closed bilinear form (\cite{ma.rockner92} Theorem 2.15 p.22). The contraction properties \eqref{eq:contractions} are proved in \cite{ma.rockner92} Section II.2.d. (p.48) under some conditions (Equations (2.16) p.48) which come as a consequence of our assumption \eqref{eq:supermedian}.

The fact that the domain is $H^1(\mu)$ comes from the fact that, under the assumption \ref{ass.2}, $F$ and $\grad\cdot(hF)$ are bounded, $E_\al^{s,1}$ and $\norm{\cdot}_{H^1(\mu)}$ are equivalent norms. The regularity is obvious from the properties of the sets $H^1(\mu)$ and $C_c$. The local property is also obvious from Equations \eqref{eq:DFsym} and \eqref{eq:DFantisym}.
\end{proof}

\subsection{Orbit space}

We need to construct our new state space from the Hamiltonian $H$. We define a graph, with a set of vertices $\cV$, and edges (with their length).
In the rest of the section we denote $\cC(x)$ the connected level set of $H$ containing $x$.

We define a vertex $O$ as a connected level set of $H$ containing a stationary point. We denote $\cV$ this set to which we add a artificial vertex $O=\infty$. Let us recall that we assume that $H$ is bounded below (Assumption \ref{ass.1}).

In order to define the edges, we prove a lemma.
\begin{lemma}\label{lem:edge}
Given any $x\in\bbR^2$ such that $\cC(x)$ does not contain any stationary point (i.e. $\cC(x)$ is not a vertex), there exists a unique closed interval $I$ of $\bbR$ of the form $I=[m_-,m_+]$ or $I=[m_-,+\infty[$ such that $x\in H^{-1}(\mathring I)$ and
the connected domain $A_I$ of $H^{-1}(I)$ containing $x$ satisfies
\begin{itemize}
\item $H^{-1}(m_-)\cap A_I$ is a vertex, denoted $O_I^-$;
\item if $m_+<+\infty$, $H^{-1}(m_+)\cap A_I$ is also a vertex denoted $O_I^+$ (with the convention $O_I^+=\infty$ if $m_+=+\infty$).
\end{itemize}
Moreover for any $m\in I$, $A_I\cap H^{-1}(m)$ is a connected level set of $H$.
\end{lemma}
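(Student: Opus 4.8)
The plan is to construct the interval $I$ by following the connected component of a sublevel/superlevel set of $H$ starting from $x$ and seeing how far one can go before hitting a stationary level. More precisely, I would first work with the \emph{connected component} $A_c$ of $H^{-1}(\mathring J)$ containing $x$, where $J$ is an open interval around $H(x)$ chosen small enough that $A_c$ contains no stationary point; such a $J$ exists because the stationary points form a closed set, $H$ is $C^2$ with compact level sets, and $x$ lies on a non-critical level set. On $A_c$, the value $H(x)$ is a regular value throughout, so by the implicit function theorem each level set $A_c\cap H^{-1}(m)$ is a one-dimensional submanifold, and $A_c$ is diffeomorphic to a product (a ``tube'') of an interval of $m$-values with a circle or a line; connectedness of $A_c\cap H^{-1}(m)$ follows from this product structure. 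This handles the local picture.

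Next I would take the union of all such open connected tubes through $x$: define $m_+ = \sup\{m : x \text{ and } H^{-1}(m) \text{ lie in a common connected component of } H^{-1}((H(x)-\e,m)) \text{ containing no stationary point}\}$, and symmetrically $m_-$. Equivalently, let $\mathring I$ be the maximal open interval containing $H(x)$ such that the connected component $A$ of $H^{-1}(\mathring I)$ through $x$ contains no stationary point. Maximality plus the local argument shows $A$ is still a tube over $\mathring I$ and $A\cap H^{-1}(m)$ is connected for each $m\in\mathring I$. The key point is then to analyze what happens at the endpoints $m_\pm$ (when finite). By maximality, $H^{-1}(m_-)\cap \overline{A}$ cannot consist only of regular points — otherwise one could extend slightly past $m_-$, contradicting the definition — so $H^{-1}(m_-)\cap \overline{A}$ contains a stationary point. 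Using boundedness below of $H$ (Assumption \ref{ass.1}) and compactness of level sets, one checks that $m_-$ is always finite, whereas $m_+$ may be $+\infty$ if no critical level is reached going up (this is the $I=[m_-,+\infty[$ case, with $O_I^+=\infty$). Setting $A_I$ to be the connected component of $H^{-1}(I)$ containing $x$ (the closure of $A$ together with the relevant boundary level sets), one identifies $O_I^\pm$ as the connected level sets of $H$ in $A_I$ over $m_\pm$, and one must verify these are single connected level sets containing a stationary point, i.e.\ genuine vertices; here compactness of the level set $H^{-1}(m_\pm)$ and connectedness of $A_I$ do the work.

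For uniqueness: any interval $I' = [m'_-, m'_+]$ with the stated properties has $x\in H^{-1}(\mathring{I'})$ and $A_{I'}$ free of stationary points in its interior, hence $\mathring{I'} \subseteq \mathring I$ by maximality of $\mathring I$; conversely, if $\mathring{I'}$ were strictly smaller, then one endpoint of $I'$, say $m'_-$, would satisfy $m_- < m'_- $, but then $H^{-1}(m'_-)\cap A_{I'}$ would be a regular level set sitting in the interior tube $A$, contradicting the requirement that it be a vertex. So $I' = I$ and then $A_{I'} = A_I$ since both are the connected component of $H^{-1}(I)$ through $x$.

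I expect the main obstacle to be the careful topological bookkeeping at the endpoints: showing that when the tube $A$ cannot be extended past $m_-$, the obstruction is exactly a stationary point lying on $H^{-1}(m_-)\cap\overline{A}$, and that $H^{-1}(m_-)\cap A_I$ is a \emph{single} connected level set (a vertex) rather than several. This requires combining the regular-value tube structure on $\mathring I$, properness of $H$ on the relevant region (from compact level sets), and a limiting argument as $m\uparrow m_-$ or $m\downarrow m_-$; one should argue that the closures of the circles/lines $A\cap H^{-1}(m)$ converge to a connected compact set in $H^{-1}(m_-)$, which must then be the connected level set $\cC$ of a stationary point. The finiteness of $m_-$ via boundedness below, and the dichotomy for $m_+$, are comparatively routine once this core argument is in place.
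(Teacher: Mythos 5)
Your proposal is correct and follows essentially the same route as the paper: take the maximal interval around $H(x)$ over which the connected component of the preimage contains no stationary point, so that by maximality the endpoint levels must meet a stationary point, while the intermediate level sets are connected because the region fibers as a tube over the interval. The paper realizes this tube explicitly by flowing $\cC(x)$ along $\dot y=\grad H(y)/|\grad H(y)|^2$, which shifts the value of $H$ by exactly $t$ and gives $A_I\cap H^{-1}(m)=\phi_{m-m_0}(\cC(x))$ --- this is precisely the device you would need to upgrade your local implicit-function-theorem picture to the claimed global product structure and to see that each $A_I\cap H^{-1}(m)$ is a single connected level set.
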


\begin{proof}
Let us consider $x\in\bbR^2$ such that $\cC(x)$ does not contain a stationary point, and denote $m_0=H(x)$. 
For $a<m_0$ and $b>m_0$, let us denote $C(a,b)$ the connected domain of $\acc{x, H(x)\in ]a,b[}$ containing $\cC(x)$. 
Note that since $H$ is $C^1$ and by assumption $\cC(x)$ is compact, there exists $\eta>0$ such that $C(m_0-\eta, m_0+\eta)$ does not contain any stationary point.
Then let us denote, 
\begin{align}
m_-&=\min\acc{ a<m_0, \forall z\in C(a,m_0+\eta), \grad H(z)\neq 0}\\
m_+&=\max\acc{ b>m_0, \forall z\in C(m_0-\eta,b), \grad H(z)\neq 0}\ieg+\infty.
\end{align}
We have $m_-\ieg m_0-\eta$ and $m_+\seg m_0+\eta$. 
Then, we see that $I=[m_-,m_+]$ (or $I=[m_-,+\infty[$ if $m_+=+\infty$) satisfies our properties. The facts that $x\in H^{-1}(\mathring{I})$ and that $H^{-1}(m_-)\cap A_I$ (resp. $H^{-1}(m_+)\cap A_I$) contains a vertex are obvious.

To prove that for all $m\in I$, $A_I\cap H^{-1}(m)$ is a connected level set, we consider the flow $\phi_t$ induced by the differential equation
\be
\dot{y}(t)=\frac{\grad H(y(t))}{\abs{\grad H (y(t))}^2}.
\ee
Remark that this flow is well defined until the orbit reach a point for which $\grad H(x)=0$. Then it is well defined on $A_I\cap H^{-1}(\mathring{I})$ and that, for all $y$ in this set, $H(\phi_t(y))=t+H(y)$, for $t$ such that $\phi_s(y)$ is not a stationnary point for any $s$. Then $\phi_t(\cC(x))$ is a connected level set of $H$ for all $t\in \mathring{I}$ and by definition of $A_I$, we get $A_I\cap H^{-1}(m)=\phi_{m-m_0}(\cC(x))$. Therefore, it is a connected set for all $m\in\mathring{I}$.
\end{proof}

\begin{remark}
Given a point $x$ and the interval $I$ associated to it by Lemma \ref{lem:edge}, for $y\in A_I\cap H^{-1}(\mathring{I})$, the interval associated by the lemma is also $I$. For all $x$, we denote $(I(x),O^-_I(x),O^+_I(x))$ the interval and the two vertices associated to $x$ through Lemma \ref{lem:edge}.
Under the Assumption \ref{ass.1}, the set $S=(I_i,O^-_i,O^+_i)_i$ of distinct triplets given by Lemma \ref{lem:edge} is countable, therefore the set of indices $i$ is countable.
\end{remark}

We are ready to define our graph.
\begin{definition}
Let consider the set $S=(I_i,O^-_i,O^+_i)_i$.
Our graph $\G$, is given by the collection of edges $I_i$, the collection of vertices $\cV$. An edge $I$ is related to the vertices $O^-, O^+$ such that $(I,O^-,O^+)\in S$. 

We also define the projection $\pi$ from $\bbR^2$ to $\G$. For $x\in\bbR^2$, we define  $\pi(x)=O$ if $\cC(x)$ is the vertex $O$, otherwise $\pi(x)=(H(x),i(x))$ where $i(x)$ is defined as the index such that $I(x)=I_{i(x)}$.

We equip $\G$ with the minimal topology making $\pi$ continuous.
\end{definition}

\begin{remark}
The space $\G$ is therefore a disjoint countable union of interval $(I_i)_i$ of $\bbR$ glued together by one or two of their extremities $(O_k)$ associated to stationnary points of $H$. $i(x)$ is a discrete first integral for the system but is defined only in the interior of the edges. At a vertex we can choose one the indices of the incident edges (e.g. the lowest integer). Note also that $\G$ is a tree (i.e. it does not have any cycle).
\end{remark}

Let us now consider the equivalence relation $\sim$ on $\bbR^2$ defined by
\be
x\sim y\Leftrightarrow \text{$x$ and $y$ are in the same connected component of a level set of $H$}.
\ee

\begin{proposition}\label{prop:orbit}
We have
\be
\G=\bbR^2/\sim.
\ee
\end{proposition}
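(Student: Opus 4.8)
The plan is to identify $\pi\colon\bbR^2\to\G$ with the quotient map: I would show that the fibres of $\pi$ are exactly the equivalence classes of $\sim$, so that $\pi$ factors through a bijection $\bar\pi\colon\bbR^2/\!\sim\;\to\G$, and then observe that, since $\G$ carries by construction the topology transported through $\pi$, this $\bar\pi$ is automatically a homeomorphism. First I would record that the $\sim$-class of a point $x$ is precisely the connected level set $\cC(x)$: the relation is reflexive and symmetric by definition, and transitive because two connected components of a level set of $H$ are either equal or disjoint.

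Next I would check that $\pi$ is constant on classes. If $x\sim y$ then $\cC(x)=\cC(y)$; should this set contain a stationary point it is a single vertex $O$ and $\pi(x)=\pi(y)=O$, while otherwise Lemma~\ref{lem:edge} assigns to $x$ a triple $(I(x),O^-_I(x),O^+_I(x))$. Since $\cC(x)$ is a connected subset of $H^{-1}(I(x))$ containing $x$ it lies in the cylinder $A_{I(x)}$, and $H(x)\in\mathring I(x)$; hence $y\in\cC(x)\subseteq A_{I(x)}\cap H^{-1}(\mathring I(x))$, and the Remark following Lemma~\ref{lem:edge} gives that $y$ receives the very same triple, so $i(y)=i(x)$ and $\pi(x)=(H(x),i(x))=\pi(y)$. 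Thus $\pi$ descends to a map $\bar\pi$ on $\bbR^2/\!\sim$.

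Then I would prove that $\bar\pi$ is a bijection. For injectivity, assume $\pi(x)=\pi(y)$: if this common value is a vertex $O$ then $\cC(x)=O=\cC(y)$ and $x\sim y$; if it is an interior edge point $(m,i)$ with $m\in\mathring I_i$, then $H(x)=H(y)=m$ and $x,y$ both lie in the cylinder $A_{I_i}$ of the $i$-th triple, so by Lemma~\ref{lem:edge} the set $A_{I_i}\cap H^{-1}(m)$ is a single connected component of $H^{-1}(m)$ containing both, hence equals $\cC(x)=\cC(y)$ and $x\sim y$. For surjectivity, every vertex $O\neq\infty$ is hit by any $x\in O$, and every interior point $(m,i)$, $m\in\mathring I_i$, is hit by any $x$ in the nonempty connected level set $A_{I_i}\cap H^{-1}(m)$, which is free of stationary points by Lemma~\ref{lem:edge} so that $i(x)=i$. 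The only element of $\G$ left uncovered is the artificial vertex $\infty$, which has no $\pi$-preimage; it is a formal ideal endpoint of the unbounded edges, and the stated identity is to be read modulo it (equivalently, one adjoins to $\bbR^2/\!\sim$ an ``ideal level set at $+\infty$'', which is legitimate as $H$ is proper and bounded below).

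Finally, since the topology of $\G$ is by definition the one induced through $\pi$, the map $\pi$ is an identification and $\bar\pi$ a homeomorphism. The step I expect to be the real obstacle is injectivity: one must rule out that two non-equivalent points sit on the same edge at the same height, and this is exactly what the structural part of Lemma~\ref{lem:edge} — uniqueness of the maximal cylinder $A_I$ and the identity $A_I\cap H^{-1}(m)=\cC(x)$ for $m\in\mathring I$ — is there to supply; the rest is routine bookkeeping, with only the minor subtlety of the artificial vertex $\infty$ to flag.
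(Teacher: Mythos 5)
Your proposal is correct and follows essentially the same route as the paper: the paper also constructs the bijection between $\bbR^2/\!\sim$ and $\G$ by sending a class containing a stationary point to its vertex and otherwise using Lemma~\ref{lem:edge} to assign the pair $(H(\cC),i)$, with injectivity resting on the fact that $A_I\cap H^{-1}(m)$ is a single connected level set. Your extra observations — factoring through $\pi$, the homeomorphism remark, and flagging that the artificial vertex $\infty$ has no preimage so the identity is read modulo it — are a slightly more careful bookkeeping of the same argument (the paper's proof silently ignores the vertex $\infty$).
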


\begin{proof}
We construct a bijection $\phi$ from $\bbR^2/\sim$ to $\G$. Given a connected level set in $\cC$ in $\bbR^2/\sim$, if there is a stationary point $x\in\cC$, then by definition of $\G$, $\cC$ is a vertex and $\phi(\cC)=\cC\in\cV$. If $\cC$ does not contain any stationary point, then, according to Lemma \ref{lem:edge}, there exists a unique edge $I_i$ and $\phi(\cC)=(H(\cC),i)$ where $H(\cC)$ is the common value of $H$ along $\cC$. $\phi^{-1}$ is also well defined since, according to \ref{lem:edge}, for any $i$ and $h\in\mathring{I}$, the set $\pi^{-1}((h,i))$ is a connected level set of $H$, therefore a unique equivalence class in $\bbR^2/\sim$.
\end{proof}

We define $C_i(m)$ the connected level set 
\be
C_i(m)=\acc{x, H(x)=m, i(x)=i}=\acc{x, \pi(x)=(m,i)}.
\ee
$A_i$ is the domain of $\bbR^2$
\be\label{eq:defset}
A_i=\acc{x, i(x)=i, H(x)\in\mathring{I_i}}=\bigcup_{m\in \mathring{I_i}}C_i(m).
\ee

\begin{remark}
Let $h$ be a smooth function constant on connected level sets (such as the density of the measure $\mu$ satisfying Assumption \ref{ass.2}). Note that on each $A_i$, there exists a function $\psi_i$ such that $h=\psi_i(H)$. Then, we get, on each $A_i$,
\be
\grad\cdot(h A\grad H)=\grad\cdot(\psi_i(H) A\grad H)=0.
\ee
Since for a point $x$ which is not in any $A_i$, $\grad H(x)=0$, we have $\grad\cdot(h A\grad H)=0$ on the whole space $\bbR^2$.
\end{remark}

We introduce a decomposition of $\bbR^2$ into disjoints sets. Let us first introduce a partition of the vertices:
\begin{align}
\cV_*&=\acc{O\in \cV, \mu(\pi^{-1}(O))>0}&
\cV_0&=\acc{O\in \cV, \mu(\pi^{-1}(O))=0}.
\end{align}

\begin{lemma}\label{lem:decomp}
We have the following decompositions, and for $f\in L^1(\mu)$
\begin{align}\label{eq:decomp}
\bbR^2
&=\bigcup_i A_i\cup\bigcup_{O\in\cV_*}\pi^{-1}(O)\cup\bigcup_{O\in\cV_0}\pi^{-1}(O)\\\label{eq:integral}
\int_{\bbR^2}f\dint \mu
&=\sum_i\int_{A_i}f\dint \mu+\sum_{O\in\cV_*}\int_{\pi^{-1}(O)}f\dint \mu.
\end{align}
We also have $\grad H=0$, identically on $\pi^{-1}(O)$, for $O\in \cV_*$.
\end{lemma}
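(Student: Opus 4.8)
The plan is to treat the three assertions in turn: \eqref{eq:decomp} is elementary, \eqref{eq:integral} follows from it by countable additivity once an exceptional set is shown to be $\mu$-negligible, and the gradient statement on $\cV_*$ comes from the implicit function theorem.

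\emph{The decomposition \eqref{eq:decomp}.} For $x\in\bbR^2$ I distinguish two cases according to its connected level set $\cC(x)$. If $\cC(x)$ contains a stationary point, then by construction $\cC(x)$ is a vertex $O$, so $x\in\pi^{-1}(O)$ with $O$ in exactly one of $\cV_*,\cV_0$. If $\cC(x)$ contains no stationary point, Lemma \ref{lem:edge} provides a unique edge index $i=i(x)$ with $x\in H^{-1}(\mathring{I_i})$, hence $x\in A_i$ by \eqref{eq:defset}. The two cases exclude each other, and inside each of the three families the members are pairwise disjoint: distinct indices $i$ give disjoint $A_i$ since $i(\cdot)$ is single-valued, distinct vertices have disjoint fibres, and $A_i\cap\pi^{-1}(O)=\vide$ because every point of $A_i$ has a non-vertex connected level set. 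This gives \eqref{eq:decomp}.

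\emph{The integral formula \eqref{eq:integral}.} Each $A_i$ is Borel (a connected component of the closed set $H^{-1}(I_i)$ intersected with the open set $H^{-1}(\mathring{I_i})$), and so is each fibre $\pi^{-1}(O)$ (a connected component of a level set). The set of indices $i$ is countable by the remark following Lemma \ref{lem:edge}, and $\cV_*$ is countable because the fibres $\pi^{-1}(O)$, $O\in\cV$, are pairwise disjoint, of positive $\mu$-measure, and $\mu$ is $\sigma$-finite. Setting $N:=\bigcup_{O\in\cV_0}\pi^{-1}(O)=\bbR^2\setminus\big(\bigcup_iA_i\cup\bigcup_{O\in\cV_*}\pi^{-1}(O)\big)$, a Borel set, the decomposition \eqref{eq:decomp} and countable additivity of the integral yield
\[
\int_{\bbR^2}f\dint\mu=\sum_i\int_{A_i}f\dint\mu+\sum_{O\in\cV_*}\int_{\pi^{-1}(O)}f\dint\mu+\int_Nf\dint\mu ,
\]
so \eqref{eq:integral} amounts to $\mu(N)=0$. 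I expect this to be the one delicate point: it holds as soon as $\cV$ is countable, for then $N$ is a countable union of the $\mu$-null sets $\pi^{-1}(O)$, $O\in\cV_0$. Countability of $\cV$ is the expected situation under the standing hypotheses on $H$ (it is automatic if, for instance, $H$ is Morse, or merely has a discrete set of critical values), and I would simply invoke it here.

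\emph{The gradient on $\cV_*$.} Fix $O\in\cV_*$ and write $\cC=\pi^{-1}(O)$, a connected component of a level set $H^{-1}(c)$ with $\mu(\cC)>0$. Split $\cC=\big(\cC\cap\{\grad H=0\}\big)\sqcup\big(\cC\cap\{\grad H\neq0\}\big)$. By the implicit function theorem the set $H^{-1}(c)\cap\{\grad H\neq0\}$ is a one-dimensional embedded $C^1$ submanifold of $\bbR^2$; covering it by countably many charts shows it is Lebesgue-null, hence $\mu$-null since $\dint\mu=h\dx$ with $h>0$. Therefore $\grad H=0$ holds $\mu$-almost everywhere on $\pi^{-1}(O)$, which is all that \eqref{eq:integral} and the coarea computations of the next section require; the pointwise version follows once ``dangling'' regular arcs attached to such a positive-measure level set are excluded, again by the regularity built into the setting for $H$.
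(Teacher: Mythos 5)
The paper offers no written proof of this lemma (it is stated as immediate), so the comparison is with the argument the authors evidently have in mind, which is the one you give: the case split for \eqref{eq:decomp}, countable additivity for \eqref{eq:integral}, and the implicit function theorem for the gradient statement. Your treatment of \eqref{eq:decomp} is complete, and your handling of the last claim is in fact more careful than the statement itself: the set where $\grad H\neq0$ inside a level set is a one-dimensional $C^1$ submanifold, hence Lebesgue- and $\mu$-null, so $\grad H=0$ holds $\mu$-a.e.\ on $\pi^{-1}(O)$, and this is all that is used later (e.g.\ in the proof of Theorem \ref{th:DF}). The literal ``identically'' is actually too strong in general -- the configuration of Figure \ref{fig:2} has regular arcs attached to a positive-mass fiber -- so your final sentence claiming the pointwise version ``follows \dots by the regularity built into the setting'' should be deleted rather than asserted; stopping at the a.e.\ statement is the correct move.

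The genuine gap is the step you propose to ``simply invoke'': $\mu$-negligibility of $\bigcup_{O\in\cV_0}\pi^{-1}(O)$. Countability of $\cV$ does not follow from Assumption \ref{ass.1}: the remark after Lemma \ref{lem:edge} only gives countability of the set of \emph{edges}, and a vertex need not be incident to any edge (a critical connected level set whose value is a two-sided accumulation point of critical values is not an endpoint of any $I_i$), so $\cV_0$ may be uncountable. Worse, the conclusion itself can fail under Assumption \ref{ass.1} alone: take $H(x)=g(\abs{x})$ with $g$ strictly increasing, $C^2$, bounded below, and $g'=0$ exactly on a fat Cantor set $C\subset(0,\infty)$. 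Every circle $\acc{\abs{x}=r}$ with $r\in C$ is then a vertex whose fiber is $\mu$-null, yet the union $\acc{\abs{x}\in C}$ has positive Lebesgue, hence $\mu$, measure, so \eqref{eq:integral} fails for the indicator of that set. Thus no argument can close this step from the stated hypotheses; one needs an additional assumption (e.g.\ only finitely or countably many connected level sets containing stationary points, or a discrete set of critical values, as in the Freidlin--Wentzell setting), which should be made explicit at this point rather than invoked implicitly -- this is a gap in your write-up and, strictly speaking, in the lemma as stated.
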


%

\subsubsection{Examples}
We give in this section some examples of the space $\G$ obtain for some given $H$ and some examples of vertices. In the following we denote, for $(x_1,x_2)\in\bbR^2$, $r=x_1^2+x_2^2$.

\begin{example}
The simplest example is maybe given by 
\be\nonumber
H_1(x_1,x_2)=\frac{r^2}2
\qquad
\begin{aligned}
\xymatrix{
&&+\infty\\
&&\\
0\ar@{->}[uu]^{H_1}&&(0,0)\ar@{-}[uu]}
\end{aligned}
\ee
The space $\G$ has a unique edge $I_1=\bbR_+$ and one vertex $O=(0,0)$. The connected level set associated to $H_1=m\in I_1$ is the circle centered on $(0,0)$ with radius $\sqrt{2m}$. The vertex $O$ is a simple point. 

Note that if we choose for $H$:
\be
H_1(x_1,x_2)=
\begin{cases}
0& \text{if $r\ieg 1$}\\
(r-1)^2& \text{otherwise.}
\end{cases}
\ee
The orbit space $\G$ is the same but the vertex $O$ is the whole ball of radius $1$. This is an example of a vertex having some positive mass.
\end{example}

\begin{example}
Let us consider the function $H_2$ defined by $H_2(x_1,x_2)=\frac{x^4_1}4-\frac{x^2_1}2+\frac{x^2_2}2$.
\begin{align}\nonumber
\xymatrix{
&&&\infty&\\
H_2=0\ar@{->}[u]&&&(0,0)\ar@{-}[u]&\\
H_2=-\frac14\ar@{->}[u]&&(-1,0)\ar@{-}[ur]&&(1,0)\ar@{-}[ul]}
\end{align}
The space $\G$ is composed of three edges glued together at a point representing the connected level set of the saddle $(0,0)$.
This connected level set is the internal vertex and does not have any mass. However, as in the first example, the vertex could have some mass (see Figure \ref{fig:1} and Figure \ref{fig:2}).
\begin{figure}[h!]
\begin{minipage}{0.48\textwidth}
\centering
\psset{unit=40pt}
\begin{pspicture}(-2,-1)(2,1)
   \psaxes{->}(0,0)(-2,-1)(2,1)
   \psgrid[griddots=5, subgriddiv=0, gridlabels=0pt](-2,-1)(2,1)
   \psecurve[linewidth=1pt](-1,-0.707)(0,0)(1,0.707)(1.414,0)(1,-0.707)(0,0)(-1,0.707)(-1.414,0)(-1,-0.707)(0,0)(1,0.707)
\end{pspicture}
\caption{\label{fig:1}Connected level set associated to the internal vertex for $H_2$.}
\end{minipage}\quad%
\begin{minipage}{0.48\textwidth}
\centering
\psset{unit=40pt}
\begin{pspicture}(-2,-1)(2,1)
   \psaxes{->}(0,0)(-2,-1)(2,1)
   \psgrid[griddots=5, subgriddiv=0, gridlabels=0pt](-2,-1)(2,1)
   \pscurve[linewidth=1pt](0.5,0.707)(1,0.707)(1.414,0)(1,-0.707)(0.5,-0.707)
   \pscurve[linewidth=1pt](-0.5,0.707)(-1,0.707)(-1.414,0)(-1,-0.707)(-0.5,-0.707)
   \psframe*(-0.5,-0.707)(0.5,0.707)
\end{pspicture}
\caption{\label{fig:2}Connected level set associated to an internal vertex with some mass.}
\end{minipage}
\end{figure}
\end{example}

\begin{example}
Let us consider the function $H_3$ defined by
$H_3(x_1,x_2)=\frac{x^4_1}4-\frac{x^2_1}2+\frac{x^4_2}4-\frac{x^2_2}2$. The space $\G$ is composed of five branches glued together at a point $O_1$ representing the connected level set of the saddles $(\pm1,0),(0,\pm1)$:
\be\nonumber
\begin{aligned}
\xymatrix{
&&&\infty&\\
H_3=0\ar@{->}[u]&&(0,0)\ar@{-}[dr]&&\\
H_3=-\frac14\ar@{->}[u]&&&O_1\ar@{-}[uu]&\\
H_3=-\frac12\ar@{->}[u]&(-1,-1)\ar@{-}[urr]&(-1,1)\ar@{-}[ur]&(1,-1)\ar@{-}[u]&(1,1)\ar@{-}[ul]}
\end{aligned}
\ee
In this case the internal vertex at $H_3=-1/4$ has a more complex structure.
\begin{figure}[ht]
\centering
\psset{unit=50pt}
\begin{pspicture}(-2,-2)(2,2)
   \psaxes{->}(0,0)(-2,-2)(2,2)
   \psgrid[griddots=5, subgriddiv=0, gridlabels=0pt](-2,-2)(2,2)
   \rput{45}{%
   \psellipse[linewidth=1pt](0,0)(1.848,0.765)%
   \psellipse[linewidth=1pt](0,0)(0.765,1.848)}
\end{pspicture}
\caption{Connected level set associated to the internal vertex for $H_3$.}
\end{figure}
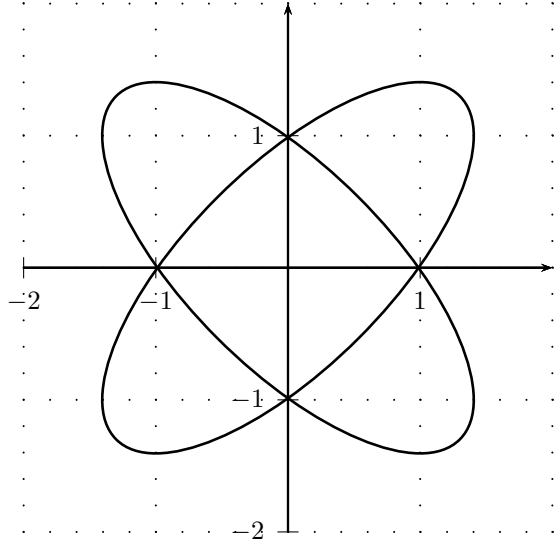

\end{example}

\section{Convergence of the Process}\label{sec:conv}


\subsection{Projected Dirichlet form}

We make the following remark: we see that if we evaluate  $E_{\al}$ on functions which are constant on connected level sets of $H$, then the $\al$-dependence in the antisymmetric part (Equation \eqref{eq:DFantisym}) of the Dirichlet form vanishes. More formally, the projection $\pi$ defines a pull-back $\pi_*$ on the functions on $\G$ :
\begin{align}\nonumber
\pi_*: \bbR^{\G}&\to \bbR^{\bbR^2}\\
u&\mapsto \barr u=u\circ \pi=u(H,i)
\end{align}

The space $\G$ has a topological structure (induced by $\pi$).
We define the $L^2$ and $H^1$ Hilbert space by using this pull-back:
\begin{align}
L^2(\G)&=\pi_*^{-1}(L^2(\mu))=\acc{u, \barr u\in L^2(\mu)}\\
\bra{u,v}_{L^2(\G)}&=\bra{\barr u,\barr v}_{L^2(\mu)}=\int_{\bbR^2}\barr u \,\barr v\dint \mu\\
H^1(\G)&=\pi_*^{-1}(H^1(\mu))=\acc{u,\barr u\in H^1(\mu)}\\
\bra{u,v}_{H^1(\G)}&=\bra{\barr u,\barr v}_{H^1(\mu)}=\int_{\bbR^2}\barr u\,\barr v+\grad\barr u\cdot \grad\barr v\dint \mu.
\end{align}
Let us denote $\Pi$ the subset of functions on $\bbR^2$ which are constant on connected level set: $\Pi=\pi_*(\bbR^{\G})$.

The pull-back naturally provides a identification (since it is an isometry by definition of $L^2(\G)$ and $H^1(\G)$):
\begin{align}
L^2(\G)&\simeq L^2(\mu)\cap\Pi&
H^1(\G)&\simeq H^1(\mu)\cap\Pi=H^1(\mu)\cap L^2(\G).
\end{align}

\begin{lemma}\label{lem:hil}
The spaces $L^2(\G)$ and $H^1(\G)$ are Hilbert spaces, and $H^1(\G)$ is dense in $L^2(\G)$.
\end{lemma}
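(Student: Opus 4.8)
\textbf{Proof proposal for Lemma \ref{lem:hil}.}

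The plan is to transport everything through the pull-back isometry $\pi_*$ and reduce the three claims to corresponding facts about the subspace $\Pi\subset L^2(\mu)$ of functions constant on connected level sets of $H$. Since $\pi_*$ is by construction a linear isometry from $L^2(\G)$ onto $L^2(\mu)\cap\Pi$ (and from $H^1(\G)$ onto $H^1(\mu)\cap\Pi$), the spaces $L^2(\G)$ and $H^1(\G)$ are Hilbert spaces as soon as $L^2(\mu)\cap\Pi$ and $H^1(\mu)\cap\Pi$ are closed subspaces of $L^2(\mu)$ and $H^1(\mu)$ respectively; a closed subspace of a Hilbert space is itself a Hilbert space. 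So the first step is to prove that $\Pi\cap L^2(\mu)$ is closed in $L^2(\mu)$: if $\barr u_n\to v$ in $L^2(\mu)$ with each $\barr u_n\in\Pi$, pass to an a.e.-convergent subsequence; the a.e. limit is again a.e. constant on each connected level set $C_i(m)$, hence (after modification on a $\mu$-null set, using the decomposition of Lemma \ref{lem:decomp} and the fact that on each $A_i$ one can push $\mu$ forward along $\pi$ via the coarea formula) $v$ agrees $\mu$-a.e. with an element of $\Pi$. Here one should be a little careful: ``constant on connected level sets'' must be interpreted $\mu$-a.e., i.e. $v\in\Pi$ means $v=u\circ\pi$ $\mu$-a.e. for some $u\in\bbR^\G$; with that convention $L^2(\G)$ is genuinely the set of (classes of) such $v$ and closedness is exactly the statement that an $L^2(\mu)$-limit of functions depending on $x$ only through $(H(x),i(x))$ still depends on $x$ only through $(H(x),i(x))$ a.e. The same argument, applied simultaneously to $\barr u_n$ and $\grad\barr u_n$, gives closedness of $H^1(\mu)\cap\Pi$ in $H^1(\mu)$; one uses that $\Pi\cap H^1(\mu)=\Pi\cap L^2(\G)$ and that $H^1(\mu)$-convergence implies $L^2(\mu)$-convergence of both the functions and their gradients, so the limit lies in $\Pi$ by the $L^2$ argument and has the requisite $H^1$ regularity by completeness of $H^1(\mu)$.

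For the density of $H^1(\G)$ in $L^2(\G)$, via the isometry this is equivalent to: $\Pi\cap H^1(\mu)$ is dense in $\Pi\cap L^2(\mu)$ for the $L^2(\mu)$-norm. Given $u\in L^2(\G)$, i.e. $\barr u\in L^2(\mu)$ with $\barr u=u\circ\pi$, I would approximate $u$ on the orbit space $\G$ directly: $\G$ is a countable union of intervals $I_i$ glued at vertices, and $L^2(\G)$ is, by the coarea-type disintegration $\int_{\bbR^2}\barr f\,\dint\mu=\sum_i\int_{\mathring I_i}\bar f(m,i)\,\rho_i(m)\,\dm+\sum_{O\in\cV_*}\bar f(O)\,\mu(\pi^{-1}(O))$ of Lemma \ref{lem:decomp}, isometric to a weighted $L^2$-space over $\G$ with an atomic part at the vertices of positive mass. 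On each edge, smooth compactly-supported functions of the single variable $m$ are dense in the corresponding weighted $L^2(\mathring I_i)$, and their pull-backs $u\circ\pi$ are then functions of $H$ alone near that edge, which lie in $H^1(\mu)$ because $H\in C^2$ with bounded second derivatives and the weight $h$ is controlled; handling the atoms at $\cV_*$ is easy since there $\grad H=0$ $\mu$-a.e. (Lemma \ref{lem:decomp}) so constants near such a vertex contribute nothing to the gradient. Truncating to finitely many edges and summing (using dominated convergence in the $L^2(\mu)$-norm) yields an $H^1(\G)$ function within any prescribed $L^2(\G)$-distance of $u$. A cleaner alternative, which I would probably prefer to write up, is to observe that $C^2_c(\bbR^2)\cap\Pi$ — or even just functions of the form $\chi(H)$ with $\chi\in C^\infty_c$ composed with cutoffs adapted to the edges — already separates points of $\G$ well enough, and to invoke a Stone–Weierstrass / monotone-class argument on $\G$ to get $L^2(\G)$-density.

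The main obstacle is the measure-theoretic care needed in the first step: making precise, via the coarea formula, that an $L^2(\mu)$-limit of functions constant on connected level sets is again constant (a.e.) on connected level sets, and in particular that the ``bad set'' where $H$ is critical but which carries positive $\mu$-mass (the vertices in $\cV_*$) is handled correctly — this is exactly why Lemma \ref{lem:decomp} is stated just before, and it is what the density argument also leans on. Once that disintegration of $\mu$ over $\G$ is in hand, both closedness and density are routine, and the Hilbert-space structure of $L^2(\G)$ and $H^1(\G)$ follows immediately from that of $L^2(\mu)$ and $H^1(\mu)$ by the isometry $\pi_*$.
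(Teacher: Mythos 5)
Your argument is correct, but it takes a genuinely different and more hands-on route than the paper's. The paper disposes of both claims abstractly: completeness of $L^2(\G)$ and $H^1(\G)$ is asserted directly from the fact that $\pi_*$ is an isometry identifying them with the subspaces $\Pi\cap L^2(\mu)$ and $\Pi\cap H^1(\mu)$, and density is obtained by a two-line orthogonal-complement computation, $H^1(\G)^{\bot_{L^2(\G)}}=\overline{(H^1(\mu)^{\bot}\cup L^2(\G)^{\bot})}\cap L^2(\G)=L^2(\G)^{\bot}\cap L^2(\G)=\acc{0}$, using only that $H^1(\mu)$ is dense in $L^2(\mu)$. You instead prove closedness of $\Pi\cap L^2(\mu)$ and $\Pi\cap H^1(\mu)$ honestly (a.e. limits interpreted through the coarea disintegration of $\mu$ over $\G$), and prove density constructively on the orbit space: smooth compactly supported functions of $m$ on each edge, pulled back as functions of $H$ (hence Lipschitz with compact support, so in $H^1(\mu)\cap\Pi$), plus a separate tapering argument at the atoms $\cV_*$. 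Your route is longer but buys something real: the lattice identity $(A\cap B)^{\bot}=\overline{A^{\bot}+B^{\bot}}$ underlying the paper's computation is a statement about closed subspaces, and in general a dense non-closed subspace intersected with a closed subspace need not be dense in the latter (span a single vector outside $H^1(\mu)$), so the short argument implicitly leans on structural information about $\Pi$ that your construction makes explicit; moreover the disintegration you build is essentially formula \eqref{eq:projmeas}, which the paper needs later anyway. Conversely, the paper's argument is far shorter and avoids all bookkeeping at edges and vertices. Two points to tighten if you write yours up: for closedness, rather than arguing a.e. on individual level sets (which are typically $\mu$-null), it is cleaner to note that the coarea identity turns an $L^2(\mu)$-Cauchy sequence of pulled-back functions into a Cauchy sequence in the weighted $L^2$ space over $\G$ with atoms at $\cV_*$; and at a vertex $O\in\cV_*$ the indicator of $\pi^{-1}(O)$ is generally not in $H^1(\mu)$, so the tapering of $\chi(H)$ over a shrinking band of neighboring levels is genuinely needed there, together with a check that this band has small $\mu$-measure.
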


\begin{proof}
The pull-back $\pi_*$ is continuous since it is an isometry by definition, then $L^2(\G)$ and $H^1(\G)$ are closed  (relatively to $L^2(\mu)$ and $H^1(\mu)$). Therefore $L^2(\G)$ and $H^1(\G)$ are complete (for their respective norms induced by the norms on $L^2(\mu)$ and $H^1(\mu)$). 

To prove that $H^1(\G)$ is dense in $L^2(\G)$, we want to show that
\be
H^1(\G)^{\bot_{L^2(\G)}}=\acc{0}
\ee
where the orthogonal is taken in $L^2(\G)$. Then, by definition
\begin{align}\nonumber
H^1(\G)^{\bot_{L^2(\G)}}&=H^1(\G)^{\bot}\cap L^2(\G)=\overline{(H^1(\mu)^{\bot}\cup L^2(\G)^{\bot})}\cap L^2(\G)\\
&=L^2(\G)^{\bot}\cap L^2(\G)=\acc{0}.
\end{align}
We used that $H^1(\mu)$ (as a subspace of $L^2(\mu)$) is dense in $L^2(\mu)$ i.e. $H^1(\mu)^{\bot}=\acc{0}$.
\end{proof}

The limiting form is defined by projection (Equation \eqref{eq:intro.0}) but may, a priori, depend on $\al$. We denote $\cE_{\al}$ the projection of $E_{\al}$ and define its domain $\ccD(\cE_\al)$: for $u, v$ 
\begin{align}
\cE_{\al}(u,v)
&=E_{\al}(u\circ\pi,v\circ\pi)\\
\ccD(\cE_{\al})
&=\acc{u\in L^2(\G), E_\al^s(u\circ \pi,u\circ\pi)<+\infty}
=\pi_*^{-1}(H^1(\mu))=H^1(\G).
\end{align}

\begin{theorem}\label{th:DF}
The form $\cE_{\al}$ does not depend on $\al$ and defines a Dirichlet form on $L^2(\G)$ with domain $H^1(\G)$, denoted $\cE$. Moreover, $\cE$ is regular and has the local property. We decompose $\cE=\cE^s+\cE^a$ into its symmetric and antisymmetric parts, for all $u,v\in H^1(\G)$
\begin{align}\label{eq:DFprojsym}
\cE^s(u,v)
&=\e\sum_{i\in I}\int_{A_i}\abs{\grad H}^2\barr{\del_1u \del_1v}\dint \mu\\\nonumber
&\quad-\frac12\sum_{i\in I}\int_{A_i} \frac1h \grad\cdot(hF) \barr{uv}\dint \mu
-\frac12\sum_{O\in\cV_*}\int_{\pi^{-1}(O)}\frac1h \grad\cdot(hF) \barr{uv}\dint \mu\\\label{eq:DFprojantisym}
\cE^a(u,v)
&=\frac12\sum_{i\in I}\int_{A_i}F\cdot\grad H\cro{\barr {v\del_1 u}-\barr {u\del_1 v}}\dint\mu.
\end{align}
\end{theorem}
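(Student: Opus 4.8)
The plan is to compute $\cE_\al(u,v) = E_\al(\barr u, \barr v)$ explicitly using the decomposition of $E_\al$ from the preceding proposition together with the orbit-space decomposition of $\bbR^2$ from Lemma \ref{lem:decomp}, and to verify along the way that the result is independent of $\al$. First I would observe that for $u \in H^1(\G)$ the function $\barr u = u\circ\pi$ is constant on connected level sets of $H$, so on each set $A_i$ (where, by Lemma \ref{lem:edge}, $H$ serves as a coordinate transverse to the level sets) we may write $\barr u = u(H,i)$ and hence $\grad \barr u = (\del_1 u)(H,i)\,\grad H$; in particular $\grad \barr u$ is parallel to $\grad H$ on each $A_i$, and $\grad\barr u = 0$ a.e.\ on $\pi^{-1}(\cV)$ (using the last assertion of Lemma \ref{lem:decomp} that $\grad H = 0$ identically on $\pi^{-1}(O)$ for $O\in\cV_*$, and that the complement of $\bigcup_i A_i \cup \bigcup_{O\in\cV_*}\pi^{-1}(O)$ is $\mu$-null by \eqref{eq:integral}).

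Next I would plug these into the formulas \eqref{eq:DFsym.2} and \eqref{eq:DFantisym.2}. For the symmetric part: the gradient term $\e\int \grad\barr u\cdot\grad\barr v\,\dint\mu$ becomes $\e\sum_i\int_{A_i}\abs{\grad H}^2\,\barr{\del_1 u\,\del_1 v}\,\dint\mu$ because the contribution from $\pi^{-1}(\cV)$ vanishes; the zeroth-order term $-\frac12\int \frac1h\grad\cdot(hF)\,\barr{uv}\,\dint\mu$ splits according to \eqref{eq:integral} into the sum over the $A_i$ plus the sum over $O\in\cV_*$, giving exactly \eqref{eq:DFprojsym}. For the antisymmetric part: the term $-\frac1{2\al}\int A\grad H\cdot[\barr v\grad\barr u - \barr u\grad\barr v]\,\dint\mu$ vanishes identically, since on each $A_i$ we have $\barr v\grad\barr u - \barr u\grad\barr v = (\barr{v\,\del_1 u} - \barr{u\,\del_1 v})\grad H$ and $A\grad H\cdot\grad H = 0$ (the symplectic gradient is orthogonal to the gradient), while off $\bigcup A_i$ the integrand is again supported on a null set or has $\grad H = 0$; this is precisely where the $\al$-dependence disappears. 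The remaining term $\frac12\int F\cdot[\grad\barr u\,\barr v - \grad\barr v\,\barr u]\,\dint\mu$ collapses, by the same substitution $\grad\barr u = (\del_1 u)\grad H$ on $A_i$ and vanishing elsewhere, to $\frac12\sum_i\int_{A_i} F\cdot\grad H\,[\barr{v\,\del_1 u} - \barr{u\,\del_1 v}]\,\dint\mu$, which is \eqref{eq:DFprojantisym}.

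Having shown $\cE_\al = \cE$ is $\al$-independent, it remains to check that $(\cE, H^1(\G))$ is a Dirichlet form on $L^2(\G)$ that is regular and local. Density of the domain and the Hilbert-space structure of $L^2(\G)$ and $H^1(\G)$ are already supplied by Lemma \ref{lem:hil}. Closedness of $\cE$ with respect to $\cE^{s,1}$: since $\pi_*$ is an isometry of $H^1(\G)$ onto the closed subspace $H^1(\mu)\cap\Pi$ of $H^1(\mu)$, and $E_\al^{s,1}$ restricted to this subspace is equivalent to the $H^1(\mu)$-norm (by Assumption \ref{ass.2}, as already used in the preceding proposition), $\cE^{s,1}$ is equivalent to the $H^1(\G)$-norm, hence complete. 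Coercivity (the sector condition) is inherited: $\abs{\cE^1(u,v)} = \abs{E_\al^1(\barr u,\barr v)} \le K\,E_\al^{s,1}(\barr u,\barr u)^{1/2}E_\al^{s,1}(\barr v,\barr v)^{1/2} = K\,\cE^{s,1}(u,u)^{1/2}\cE^{s,1}(v,v)^{1/2}$ with the same constant. Positivity of $\cE^{s,1}$ follows from that of $E_\al^{s,1}$. The contraction properties \eqref{eq:contractions} transfer because normal contractions of $u$ pull back to normal contractions of $\barr u$ (e.g.\ $\min(u_+,1)\circ\pi = \min((\barr u)_+,1)$, which is again constant on connected level sets), so the inequalities for $\cE$ reduce to those already known for $E_\al$. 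Regularity: one checks $C_c(\G)\cap H^1(\G)$ is dense in $(H^1(\G),\cE^{s,1})$ and in $C_c(\G)$ uniformly, using the structure of $\G$ as a locally finite graph and the norm equivalence; locality is immediate from \eqref{eq:DFprojsym}--\eqref{eq:DFprojantisym}, since disjoint supports in $\G$ pull back to disjoint supports in $\bbR^2$ and $E_\al$ is local.

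The main obstacle I anticipate is the careful justification that the various integrands are genuinely supported (up to $\mu$-null sets) on $\bigcup_i A_i \cup \bigcup_{O\in\cV_*}\pi^{-1}(O)$ and that $\grad\barr u = (\del_1 u)(H,i)\grad H$ holds in the weak (Sobolev) sense for $u\in H^1(\G)$ rather than merely for smooth $u$ — this requires an approximation argument combined with the coarea structure of Lemma \ref{lem:edge}, and is the place where the hypothesis that $\mu(\pi^{-1}(O)) = 0$ for $O\in\cV_0$ and the identity $\grad H\equiv 0$ on $\pi^{-1}(\cV_*)$ are doing real work. The rest is bookkeeping of the symmetric/antisymmetric split and transfer of the abstract Dirichlet-form axioms through the isometry $\pi_*$.
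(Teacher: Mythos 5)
Your proposal is correct and follows essentially the same route as the paper: plug the pullbacks into \eqref{eq:DFsym.2}--\eqref{eq:DFantisym.2}, use Lemma \ref{lem:decomp} together with $\grad\barr u=\barr{\del_1 u}\,\grad H$ on each $A_i$ and $A\grad H\cdot\grad H=0$ to eliminate the $\al$-dependent term, and then transfer the Dirichlet-form axioms (closedness, coercivity, contraction, locality) through the isometry $\pi_*$. The only step the paper treats more explicitly is regularity, which it settles by an orthogonal-complement density argument in $H^1(\G)$ plus Stone--Weierstrass for uniform density in $C_c(\G)$, whereas you only gesture at the graph structure of $\G$.
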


Using this theorem, we can associate to $\cE_{\al}$ a continuous Hunt process (\cite{ma.rockner92} Theorem 
IV 3.5, section IV 4. a), Theorem V 1.5). Let us denote by $(\Om, \cF, (Z_t)_t)$ this process.

\begin{proof}[Proof of Theorem \ref{th:DF}]
We compute the projection of the antisymmetric part of $\cE_\al$. Let us recall that for a function $u$ on $\G$, $\barr u=u\circ \pi$ denotes its pull-back on $\bbR^2$. 
For $u,v \in H^1(\G)$, we have, from Equation \eqref{eq:DFantisym.2} and Lemma \ref{lem:decomp}
\begin{align}
\int_{\bbR^2}A\grad H\cdot&\cro{\barr v\grad \barr u-\barr u\grad \barr v}\dint \mu
=\sum_{i\in I}\int_{A_i}A\grad H\cdot\cro{v(H,i)\grad u(H,i)-u(H,i)\grad v(H,i)}\dint \mu\\\nonumber
&+\sum_{O\in\cV_*}\int_{\pi^{-1}(O)}A\grad H\cdot\cro{v(H,i)\grad u(H,i)-u(H,i)\grad v(H,i)}\dint \mu
\end{align}
Then, from Lemma \ref{lem:decomp}, $\grad H=0$ on $\pi^{-1}(O)$ for $O\in\cV_*$. We obtain that
\begin{align}
\int_{\bbR^2}A\grad H\cdot\cro{\barr v\grad \barr u-\barr u\grad \barr v}\dint \mu
&=\sum_{i\in I}\int_{A_i}A\grad H\cdot\grad H\cro{\barr{v\del_1 u}-\barr{u\del_1 v}}\dint \mu=0.
\end{align}
We also have
\begin{align}
\int F\cdot \cro{\barr v\grad \barr u-\barr u\grad \barr v}\dint \mu
=\sum_{i\in I}\int_{A_i}F\cdot\grad H\cro{\barr {v\del_1 u}-\barr {u\del_1 v}}\dint \mu.
\end{align}
Therefore, we have
\begin{align}
\cE^a_{\al}(u,v)
=\frac12\sum_{i\in I}\int_{A_i}F\cdot\grad H\cro{\barr {v\del_1 u}-\barr {u\del_1 v}}\dint\mu.
\end{align}
Note that for the symmetric part $\cE^s_\al$ from \eqref{eq:DFsym.2}, the same calculation holds
\begin{align}
\cE^s_{\al}(u,v)
&=\e\sum_{i\in I}\int_{A_i}\abs{\grad H}^2\barr{\del_1u \del_1v}\dint \mu\\\nonumber
&\quad-\frac12\sum_{i\in I}\int_{A_i} \frac1h \grad\cdot(hF) \barr{uv}\dint \mu
-\frac12\sum_{O\in\cV_*}\int_{\pi^{-1}(O)}\frac1h \grad\cdot(hF) \barr{uv}\dint \mu.
\end{align}

Thus $\cE_{\al}$ is a bilinear form defined on $H^1(\G)$ and does not depend on $\al$ (thus denoted $\cE$). We have to prove that $(\cE, H^1(\G))$ is a Dirichlet form following the definition recalled before Assumption \ref{ass.2}. Let us denote $\cE^1$ and $\cE^{s,1}$ the forms $E^1_\al$ and $E^{s,1}_\al$ (defined by Equations \eqref{eq:N1} and \eqref{eq:N2}) projected on $H^1(\G)$.

Since $E^s_\al$ is positive definite on $H^1(\mu)$, $\cE^s$ is also positive definite on $H^1(\G)$. From Lemma \ref{lem:hil}, we know that $H^1(\G)$ is dense in $L^2(\G)$. $(\cE, H^1(\G))$ is closed on $L^2(\G)$ since $(\cE^{s,1})^{1/2}$ is a norm equivalent to $\norm{\cdot}_{H^1(\G)}$. The coercivity of $\cE$ is also inherited from the coercivity of $E_\al$. The contraction property is satisfied since it is satisfied by $E_{\al}$ and that, for $u\in H^1(\G)$
\be
\min(u_+,1)\circ \pi=\min((u\circ \pi)_+,1).
\ee

The local property is also trivially satisfied since $E_{\al}$ has the local property and if $u,v \in 
H^1(\G)$ are such that $\supp u\cap\supp v=\vide$, then
\be
\supp (u\circ \pi)\cap\supp (v\circ \pi)\subset \pi^{-1}(\supp u\cap\supp v)=\vide.
\ee

It now remains to show the regularity of the Dirichlet form.
Since $C_c\cap H^1(\mu)$ is dense in $H^1(\mu)$, we have $(C_c\cap H^1(\mu))^{\bot}=0$
where the orthogonal is taken with respect to the $H^1$-scalar product.
Then we have:
\begin{align}\nonumber
(C_c(\G)\cap H^1(\G))^{\bot_{H^1(\G)}}
&=(C_c\cap H^1(\mu)\cap H^1(\G))^{\bot}\cap H^1(\G)\\\nonumber
&=\overline{((C_c\cap H^1(\mu))^{\bot}\cup H^1(\G)^{\bot})}\cap H^1(\G)\\
&=H^1(\G)^{\bot}\cap H^1(\G)=0
\end{align}
which prove the density of $C_c(\G)\cap H^1(\G)$ in $H^1(\G)$.
The fact that $C_c(\G)\cap H^1(\G)$ is dense in $C_c(\G)$ (for the uniform norm) is a consequence of the Stone-Weierstrass theorem.
\end{proof}

\subsection{Mosco-convergence}

We would want to prove the averaging principle by the Mosco convergence of the Dirichlet form $(E_{\al}, H^1(\mu))$ on $L^2(\mu)$ to $(\cE, H^1(\G))$ defined on $L^2(\G)$ as $\al\to0$.
We then define (following \cite{hino98}) weak and strong convergence for elements in $L^2(\mu)$ to elements in $L^2(\G)$. 

\begin{definition}[Convergence]
The sequence $(u_n)$ in $L^2(\mu)$ strongly converge to $u\in L^2(\G)$, noted $u_n\conver{n}{+\infty}{L^2(\G)}u$, if 
\be
\norm{u_n-u\circ\pi}_{L^2(\mu)}\conv{n}{+\infty}0.
\ee
The sequence $(u_n)$ in $L^2(\mu)$ weakly converge to $u\in L^2(\G)$, noted $u_n\wconver{n}{+\infty}{L^2(\G)}u$, if 
\be
\left\{
\begin{aligned}
&\sup_n\norm{u_n}_{L^2(\mu)}<+\infty\\
&\bra{u_n, v\circ\pi}_{L^2(\mu)}\conv{n}{+\infty}\bra{u\circ\pi,v\circ\pi}_{L^2(\mu)}=\bra{u,v}_{L^2(\G)}, \forall v\in L^2(\G).
\end{aligned}
\right.
\ee
\end{definition}

We have the following proposition.
\begin{proposition}
We have the equivalence:
\be
u_n\conver{n}{+\infty}{L^2(\G)}u
\Longleftrightarrow
\left\{
\begin{aligned}
&u_n\wconver{n}{+\infty}{L^2(\G)}u\\
&\norm{u_n}_{L^2(\mu)}\conv{n}{+\infty}\norm{u\circ\pi}_{L^2(\mu)}.
\end{aligned}
\right.
\ee
\end{proposition}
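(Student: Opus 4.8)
The plan is to recover, in this slightly non-standard setting where the limit lives in the isometrically embedded subspace $\pi_*(L^2(\G))\subset L^2(\mu)$, the classical Hilbert-space characterization of strong convergence as weak convergence together with convergence of the norms (the Radon--Riesz property). Everything reduces to expanding $\norm{u_n-u\circ\pi}_{L^2(\mu)}^2$ and using that, because $u\in L^2(\G)$, the function $u\circ\pi$ is itself an admissible test element in the definition of weak convergence.

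For the forward implication I would start from $\norm{u_n-u\circ\pi}_{L^2(\mu)}\conv{n}{+\infty}0$. The reverse triangle inequality then gives $\norm{u_n}_{L^2(\mu)}\conv{n}{+\infty}\norm{u\circ\pi}_{L^2(\mu)}$, and in particular $\sup_n\norm{u_n}_{L^2(\mu)}<+\infty$. For the pairing condition, fix $v\in L^2(\G)$ and write
\[
\abs{\bra{u_n,v\circ\pi}_{L^2(\mu)}-\bra{u,v}_{L^2(\G)}}=\abs{\bra{u_n-u\circ\pi,\,v\circ\pi}_{L^2(\mu)}}\ieg\norm{u_n-u\circ\pi}_{L^2(\mu)}\,\norm{v\circ\pi}_{L^2(\mu)},
\]
where the isometry property of $\pi_*$ is used to identify $\bra{u\circ\pi,v\circ\pi}_{L^2(\mu)}$ with $\bra{u,v}_{L^2(\G)}$; Cauchy--Schwarz makes the right-hand side vanish, so $u_n\wconver{n}{+\infty}{L^2(\G)}u$.

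For the reverse implication I would expand the square in $L^2(\mu)$:
\[
\norm{u_n-u\circ\pi}_{L^2(\mu)}^2=\norm{u_n}_{L^2(\mu)}^2-2\bra{u_n,u\circ\pi}_{L^2(\mu)}+\norm{u\circ\pi}_{L^2(\mu)}^2.
\]
Since $u\in L^2(\G)$, the element $u\circ\pi$ is a legitimate test function in the definition of weak convergence; applying it with $v=u$ yields $\bra{u_n,u\circ\pi}_{L^2(\mu)}\conv{n}{+\infty}\bra{u,u}_{L^2(\G)}=\norm{u\circ\pi}_{L^2(\mu)}^2$. Combined with the assumed convergence $\norm{u_n}_{L^2(\mu)}^2\to\norm{u\circ\pi}_{L^2(\mu)}^2$, the right-hand side of the identity converges to $\norm{u\circ\pi}^2-2\norm{u\circ\pi}^2+\norm{u\circ\pi}^2=0$, which is exactly the strong convergence $u_n\conver{n}{+\infty}{L^2(\G)}u$.

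I do not expect a genuine obstacle here: the content is entirely the Radon--Riesz computation above, and the only points deserving a word of care are that $\pi_*$ is an isometry (so that norms and inner products transfer correctly between $L^2(\G)$ and its image in $L^2(\mu)$, which is how these spaces were defined) and that $u\circ\pi$ is an admissible test function for weak convergence --- both built into the definitions. Completeness of $L^2(\G)$, which makes the statement meaningful, is Lemma~\ref{lem:hil}.
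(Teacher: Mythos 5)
Your proof is correct and matches the paper's intent: the paper's own proof is a one-line remark that the statement is just the transposition of the usual equivalence (strong convergence $\Leftrightarrow$ weak convergence plus convergence of norms) in $L^2(\mu)$, and your argument simply writes out that standard Radon--Riesz computation, transferred through the isometry $\pi_*$ exactly as the paper intends. No gap and no genuinely different route --- you have merely supplied the details the paper leaves implicit.
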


\begin{proof}
This proposition is the transposition of the usual equivalence in $L^2(\mu)$.
\end{proof}

\begin{definition}[Mosco-convergence \cite{hino98}]\label{def:mosco}
For a sequence $(\al_n)$ converging to $0$, $E_{\al_n}$ Mos\-co-con\-ver\-ges to $\cE$ if the two following conditions hold:
\begin{enumerate}
\item for any sequence $(u_n)$ in $L^2(\mu)$ weakly converging to $u\in L^2(\G)$ and such that
\be
\sup_n E_{\al_n}^{s,1}(u_n)<+\infty
\ee
then $u$ is in $H^1(\G)$.
\item for all $v\in H^1(\G)$, and all sequences $(u_n)\in H^1(\mu)$ weakly converging in $L^2(\mu)$ to $u\in H^1(\G)$, there exists a sequence $(v_n)$ strongly converging to $v$, such that
\be
E_{\al_n}(u_n,v_n)\conv{n}{+\infty}\cE(u,v)
\ee
\end{enumerate}
The second condition could however be replaced by a weaker condition (see \cite{hino98} Section 3, Condition (F'2)): we ask that for all subsequences $(\al_{n_k})$, all sequences $(u_k)$ weakly convergent in $L^2(\mu)$ to $u\in H^1(\G)$ such that
\be
\sup_n E_{\al_{n_k}}^{s,1}(u_{k})<+\infty
\ee
and for all $v\in H^1(\G)$, there exists a sequence $(v_k)$ strongly converging to $v$ such that 
\be\label{eq:liminf}
\liminf_k E_{\al_{n_k}}(u_k,v_k)\ieg\cE(u,v)
\ee
\end{definition}

\begin{theorem}
For all sequence $(\al_n)$ converging to $0$, $E_{\al_n}$ Mosco-converges to $\cE$.
\end{theorem}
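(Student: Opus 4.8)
The plan is to verify the two conditions of Definition \ref{def:mosco}, taking advantage of the crucial structural fact, already established in Theorem \ref{th:DF}, that the symmetric part $E^s_\al=\cE^s$ actually does not depend on $\al$, and that the only $\al$-dependence in $E_\al$ sits in the antisymmetric term $-\frac1{2\al}\int A\grad H\cdot[g\grad f-f\grad g]\,\dint\mu$. I would first record the elementary but essential observation that for $f=\barr u=u\circ\pi$ with $u\in H^1(\G)$ one has $A\grad H\cdot\grad\barr u=0$ pointwise (since $\grad\barr u$ is parallel to $\grad H$ on each $A_i$ and $\grad H=0$ elsewhere), so that the divergent $\frac1\al$ term kills any test function pulled back from $\G$. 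This is what makes the limit form finite.

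For condition (1), I would start from a weakly convergent sequence $(u_n)$ with $\sup_n E^{s,1}_{\al_n}(u_n)<+\infty$. Since $E^{s,1}_{\al}=\cE^{s,1}$ is, by the Proposition preceding Section \ref{sec:2d} (and Assumption \ref{ass.2}), a norm equivalent to $\norm{\cdot}_{H^1(\mu)}$, the sequence $(u_n)$ is bounded in $H^1(\mu)$; extract a subsequence converging weakly in $H^1(\mu)$ to some $w$. Weak $L^2(\mu)$-convergence to $u\circ\pi$ forces $w=u\circ\pi$, hence $u\circ\pi\in H^1(\mu)$, i.e.\ $u\in H^1(\G)$. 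The only point requiring a word is that weak $H^1(\mu)$-limits of arbitrary sequences need not lie in $\Pi$ — but here we are told the $L^2$-limit is already of the form $u\circ\pi$, so the membership $u\in H^1(\G)=H^1(\mu)\cap\Pi$ is immediate.

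For condition (2) (in the weaker $\liminf$ form (F'2), which is all that is required), given $v\in H^1(\G)$ I would simply take the \emph{constant recovery sequence} $v_n:=\barr v=v\circ\pi$ for all $n$; this strongly converges to $v$ trivially. Then $E_{\al_n}(u_n,v_n)=E_{\al_n}(u_n,\barr v)$, and because $A\grad H\cdot\grad\barr v=0$, the $\frac1{\al_n}$-term reduces to $-\frac1{2\al_n}\int A\grad H\cdot\barr v\,\grad u_n\,\dint\mu$, which we must show tends to zero (or at least does not spoil the $\liminf$). Here I would integrate by parts to move the gradient off $u_n$: using \eqref{eq:int.2} with $G=A\grad H$ and the identity $\grad\cdot(hA\grad H)=0$ (the Remark after Lemma \ref{lem:decomp}), this term becomes $\frac1{2\al_n}\int \frac1h\grad\cdot(h\,\barr v\,A\grad H)u_n\,\dint\mu=\frac1{2\al_n}\int (A\grad H\cdot\grad\barr v)\,u_n/1\cdot\dots$, and the integrand's leading piece again carries the factor $A\grad H\cdot\grad\barr v=0$, so the whole $\frac1{\al_n}$-contribution vanishes identically, not just in the limit. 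The remaining part of $E_{\al_n}(u_n,\barr v)$ equals $E^s_\al(u_n,\barr v)+\frac12\int F\cdot[\grad u_n\,\barr v-\grad\barr v\,u_n]\dint\mu$, all of whose terms are continuous under weak $H^1(\mu)$-convergence of $u_n$ to $u\circ\pi$ (the bilinear pairing of a weakly convergent sequence against the fixed element $\barr v$), hence converge to the corresponding terms of $\cE(u,v)$; invoking the $\liminf$ inequality is then trivial since we have actual convergence.

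The main obstacle is the bookkeeping in the integration-by-parts step for the $\frac1{\al_n}$-term: one must be careful that $\barr v\,A\grad H$ is a vector field to which \eqref{eq:int.2} applies (it is, being the product of a bounded function pulled back from $\G$ with the bounded field $A\grad H$ — using the boundedness hypotheses in Assumptions \ref{ass.1}–\ref{ass.2}), and that no boundary terms appear, which is why one should first do the computation for $v\in C_c(\G)\cap H^1(\G)$ and then pass to general $v\in H^1(\G)$ by the density established in Theorem \ref{th:DF} together with the norm-equivalence $\cE^{s,1}\asymp\norm{\cdot}^2_{H^1(\G)}$. Everything else is a routine consequence of the $\al$-independence of $E^s_\al$ and of the standard weak/strong convergence calculus in $L^2(\mu)$.
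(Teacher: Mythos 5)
Your proposal is correct and follows essentially the same route as the paper's proof: condition (1) via the $\al$-independence of $E^{s}_\al$ and its equivalence with the $H^1(\mu)$-norm plus weak compactness, and condition (2) in the weaker form \eqref{eq:liminf} with the constant recovery sequence $v_n=v\circ\pi$, killing the $\frac1{\al_n}$-term through $A\grad H\cdot\grad\barr v=0$ together with the integration by parts \eqref{eq:int.2} and $\grad\cdot(hA\grad H)=0$, and passing to the limit in the remaining terms by weak $H^1(\mu)$-convergence against the fixed element $\barr v$. The extra care you take about boundary terms and density is harmless but not needed beyond what the paper already invokes.
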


\begin{proof}
We have to check that the sequence $E_{\al_n}$ satisfies the condition of the definition \ref{def:mosco}.

Let us check the first condition. Let $(u_n)$ be a sequence in $L^2(\mu)$ weakly converging to $u$ in $L^2(\G)$ and such that $E^{s,1}_{\al_n}(u_n)$ is bounded. Then since $E^{s,1}_\al$ does not depend on $\al$ (Equation \eqref{eq:DFsym.2}) and is equivalent to the norm in $H^1$, the sequence $(u_n)$ is bounded in $H^1(\mu)$. Therefore there exists a subsequence converging weakly in $H^1(\mu)$, thus converging weakly in $L^2(\mu)$. By uniqueness of the limit, we deduce that $u\circ\pi$ is in $H^1(\mu)$.

For the second condition, we prove the alternate form (Equation \eqref{eq:liminf}). We fix a sequence $(u_n)$ weakly convergent in $L^2(\bbR^2)$ to $u\in H^1(\G)$ and bounded in $H^1(\bbR^2)$. Let us consider the constant sequence $v_n=v\circ\pi$, then the Condition \eqref{eq:liminf} becomes
\be
\liminf_k E_{\al_{n_k}}(u_k,v\circ\pi)\ieg\cE(u,v).
\ee
Since $(u_k)$ is bounded in $H^1(\mu)$, it is weakly precompact in $H^1(\mu)$. Let $w\in H^1(\mu)$ be a weak accumulation point of $(u_k)$ in $H^1(\mu)$, then it is also a weak accumulation point in $L^2(\mu)$, thus $u\circ\pi=w$. Therefore $u_k$ weakly converge in $H^1(\mu)$ to $u\circ\pi$. 

Since by Assumption \ref{ass.2}, $h^{-1}\grad\cdot(hF)$ is bounded, we have the convergence of the symmetric part \eqref{eq:DFsym.2}. The convergence of the antisymmetric part \eqref{eq:DFantisym.2} comes, for the term with $F$, from the fact that $F$ is assumed to be bounded (Assumption \ref{ass.2}). For the other term, with $A\grad H$, using \eqref{eq:int.2}, we have that
\begin{align}\nonumber
\frac1{2\al_{n_k}}\int A\grad H\cdot (u_{k}\grad \barr v-\barr v\grad u_{k})\dint \mu
&=\frac1{\al_{n_k}}\int (A\grad H\cdot \grad \barr v) u_{k}\dint \mu\\
&+\frac1{2\al_{n_k}}\int\frac1h\grad \cdot (hA\grad H)u_k\barr v\dint \mu.
\end{align}
The first term vanishes because $\grad\barr v=\grad H\barr{\del_1 v}$. The second vanishes since $\grad h\cdot A\grad H=0$ by Assumption \ref{ass.2}.
\end{proof}

\subsection{Convergence of finite dimensional marginals}

Let us denote $Y^{\al}$ the process defined by Equation \eqref{eq:diff.2}. It is associated to the Dirichlet forms $E_{\al}$.

An important consequence of the Mosco convergence of the Dirichlet form is the strong convergence of the semigroup, resolvent and generator associated to the form. 

\begin{definition}
Let $(B_n)_n$ be a sequence of bounded operators on $L^2(\mu)$ and $B$ a bounded operator on $L^2(\G)$, then:
\begin{itemize}
\item $(B_n)$ strongly converge to $B$ if for every sequence $(u_n)$ strongly converging to $u$, $(B_nu_n)$ strongly converge to $Bu$,
\item $(B_n)$ weakly converge to $B$ if for every sequence $(u_n)$ weakly converging to $u$, $(B_nu_n)$ weakly converge to $Bu$.
\end{itemize}
\end{definition}

The following Theorem (\cite{hino98}, Theorem 3.5, \cite{toelle06}, Theorem 2.53) gives us the convergence of the $C_0$-contraction semigroups and resolvants associated to $E_{\al_n}$ and $\cE$. 
\begin{theorem}
Let $T^n$, $T$ be the $C_0$-semigroups and $G^n_{\la}$, $G_{\la}$ be the $C_0$-resolvents associated to $E_{\al_n}$ and $\cE$. We have the following equivalence:
\begin{enumerate}
\item $T^n_t$ strongly converges to $T_t$ for all $t\seg0$;
\item $G^n_{\la}$ strongly converges to $G_{\la}$ for all $\la\seg0$;
\item $E_{\al_n}$ Mosco-converges to $\cE$.
\end{enumerate}
\end{theorem}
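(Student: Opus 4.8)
The plan is to recognize this as the standard Mosco-convergence plus Trotter--Kato package for coercive closed (possibly non-symmetric) forms in a mildly variable-space setting, essentially Theorem 3.5 of \cite{hino98} and Theorem 2.53 of \cite{toelle06}, so that the task reduces to recalling why the three conditions are equivalent and checking that our bookkeeping is covered. The key simplifying observation I would make first is that, since the pull-back $\pi_*\colon L^2(\G)\hookrightarrow L^2(\mu)$ is an isometry onto a closed subspace, everything takes place inside the \emph{fixed} Hilbert space $L^2(\mu)$: strong (resp.\ weak) convergence $u_n\to u$ just means $u_n\to u\circ\pi$ in norm (resp.\ weakly) in $L^2(\mu)$, so the connecting maps of the general variable-space formalism are trivial, and the only residue of non-symmetry is that the sector-condition constant $K$ from coercivity stands in for the quadratic form in the symmetric arguments.

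For (3)$\Rightarrow$(2) --- the implication actually used in the sequel --- I would fix $\la>0$ and $f\in L^2(\G)$, put $u_n=G^n_\la(f\circ\pi)$, and extract the a priori bound from the variational identity $E_{\al_n}(u_n,w)+\la\bra{u_n,w}_{L^2(\mu)}=\bra{f\circ\pi,w}_{L^2(\mu)}$ tested at $w=u_n$: the antisymmetric part vanishes on the diagonal, and since $E^{s,1}_{\al_n}$ is independent of $n$ and equivalent to $\norm{\cdot}_{H^1(\mu)}$, the sequence $(u_n)$ is bounded in $H^1(\mu)$, in particular $\sup_n E^{s,1}_{\al_n}(u_n)<\infty$. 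Along a weakly convergent subsequence, Mosco condition (1) gives that the limit $u$ lies in $H^1(\G)$; applying the weaker ($\liminf$) form of condition (2) to $v$ and to $-v$, and noting that for any recovery sequence $v_n\to v$ strongly one has, since $u_n\to u$ weakly, $E_{\al_n}(u_n,v_n)=\bra{f\circ\pi,v_n}_{L^2(\mu)}-\la\bra{u_n,v_n}_{L^2(\mu)}\to\bra{f,v}_{L^2(\G)}-\la\bra{u,v}_{L^2(\G)}$, I would conclude $\cE(u,v)+\la\bra{u,v}_{L^2(\G)}=\bra{f,v}_{L^2(\G)}$ for all $v\in H^1(\G)$, i.e.\ $u=G_\la f$. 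Uniqueness of the limit upgrades this to weak convergence of the whole sequence $u_n\to G_\la f$, and the energy identity above together with the same identity for $G_\la f$ and the weak lower semicontinuity of $E^s_{\al_n}$ (again immediate from its $n$-independence) pins $\norm{u_n}_{L^2(\mu)}\to\norm{G_\la f\circ\pi}_{L^2(\mu)}$, hence strong convergence; the resolvent identity propagates this to every $\la>0$ (and to $\la=0$ in the transient case).

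For (2)$\Rightarrow$(3) I would get Mosco condition (1) for free --- a sequence with $\sup_n E^{s,1}_{\al_n}$ bounded is $H^1(\mu)$-bounded, so its already-known $L^2$-limit lies in $H^1(\G)$ --- and build the recovery sequences of condition (2) from the Yosida approximations $v_n=\la_0 G^n_{\la_0}(v\circ\pi)$, rewriting the form values through the resolvent equation as convergent $L^2$-pairings and then letting $\la_0\to\infty$ along a diagonal. Finally, (1)$\Leftrightarrow$(2) is Trotter--Kato for $C_0$-contraction semigroups on $L^2(\mu)$: strong convergence of $G^n_\la$ at one (hence every) $\la>0$ is equivalent to strong convergence of $T^n_t$ to $T_t$, locally uniformly in $t$; see \cite{hino98, kuwae.shioya03}. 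I expect the genuinely delicate point to be only the weak-to-strong upgrade of the resolvent convergence via the energy identity --- the rest is routine, and is made especially light here by the structural fact from Theorem \ref{th:DF} that $E^{s,1}_{\al_n}$ does not depend on $n$, which removes most of the variable-form difficulty present in the abstract statement.
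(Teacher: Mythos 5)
Your proposal is correct and matches the paper's approach: the paper gives no proof of its own but simply invokes Theorem 3.5 of \cite{hino98} and Theorem 2.53 of \cite{toelle06}, exactly the results you identify, with the same observation that the isometric embedding $\pi_*\colon L^2(\G)\to L^2(\mu)$ reduces the variable-space setting to a fixed Hilbert space. Your additional sketch of the standard resolvent/Trotter--Kato argument (made lighter by the $n$-independence of $E^{s,1}_{\al_n}$) is sound, but it is supplementary to what the paper actually does.
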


\begin{remark}
Let us consider $f_n$ strongly converging to $\barr f$, the strong convergence of $T^n_tf_n$ implies that 
\be\label{eq:convsg}
\norm{T^n_tf_n-\barr{T_tf}}_{L^2(\mu)}\conv{n}{+\infty}0.
\ee
\end{remark}

We now consider the convergence of finite dimensional distributions of the processes $Z^{\al_n}=(\pi(Y^{\al_n}))_n$. We suppose that the law of the initial condition $\nu_n=\ccL(Y^{\al_n}_0)$ has a density $g_n$ with respect to $\mu$ and $g_n$ converges weakly in $L^2(\mu)$ to $g\in L^2(\G)$ with $\bra{\barr g, \1}_{L^2(\mu)}=1$. Then $\nu_n$ converges weakly to the probability measure $\barr g\dint \mu$ which defines a probability measure $\wt\nu$ on $\G$. In particular, it means that for all $f_n\in C_{b}(\bbR^2)$ converging strongly uniformly to $\barr f$
\be
\int_{\bbR^2}f_n\dint \nu_n\conv{n}{+\infty}\int_{\bbR^2}\barr f \barr g\dint \mu=\int_{\G}f\dint \wt\nu.
\ee

We consider $Z$ the Markov process on $\G$ associated to the Dirichlet form $\cE$ with initial law $\nu$.

\begin{proposition}\label{prop:fdimcv}
We consider $(Y^{\al_n})$ the processes with initial distribution $\nu_n$ and the process $Z$ with initial distribution $\nu$. Under the condition that $\nu_n$ converges weakly to $\nu$, the finite dimensional distributions of $(Z^{\al_n})=(\pi(Y^{\al_n}))$ converge as $n$ goes to infinity to the finite dimensional distributions of $Z$.
\end{proposition}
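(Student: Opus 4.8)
The plan is to reduce the statement to the strong convergence of the transition semigroups---which is already available from the Mosco-convergence of Section~\ref{sec:conv} together with the equivalence theorem recalled above and the convergence \eqref{eq:convsg}---and then to propagate it through the Markov property. It suffices to fix times $0\ieg t_1<\dots<t_k$ and bounded continuous functions $f_1,\dots,f_k$ on $\G$ with compact support and to prove
\[
\bbE_{\nu_n}\Big[\prod_{j=1}^{k}f_j\big(Z^{\al_n}_{t_j}\big)\Big]\conv{n}{+\infty}\bbE_{\nu}\Big[\prod_{j=1}^{k}f_j\big(Z_{t_j}\big)\Big].
\]
Passing from $C_c(\G)$ to $C_b(\G)$, hence to genuine weak convergence of the laws on $\G^{k}$, will then be automatic: the limiting finite-dimensional laws are those of the Markov process $Z$, hence probability measures, so that vague convergence upgrades to weak convergence.

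First I would rewrite both sides by means of the Markov property. Since $Z^{\al_n}=\pi(Y^{\al_n})$ and $f_j\circ\pi=\barr{f_j}$, iterating the Markov property of $Y^{\al_n}$ gives, with $T^{n}$ the transition semigroup of $Y^{\al_n}$ (the one associated to $E_{\al_n}$) and with the convention $T^n_0=\Id$,
\[
\bbE_{\nu_n}\Big[\prod_{j=1}^{k}f_j\big(Z^{\al_n}_{t_j}\big)\Big]=\bra{T^{n}_{t_1}\phi^{n}_1,\,g_n}_{L^2(\mu)},\qquad \phi^{n}_j=\barr{f_j}\cdot T^{n}_{t_{j+1}-t_j}\phi^{n}_{j+1},\quad \phi^{n}_k=\barr{f_k},
\]
using $\nu_n=g_n\,\mu$; likewise $\bbE_{\nu}[\prod_j f_j(Z_{t_j})]=\bra{T_{t_1}\psi_1,g}_{L^2(\G)}$ with $\psi_j=f_j\cdot T_{t_{j+1}-t_j}\psi_{j+1}$, $\psi_k=f_k$, where $T$ is the semigroup of $\cE$. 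Here one uses the elementary observation that, since $H$ is bounded below with compact level sets (Assumption~\ref{ass.1}) and $h$ is continuous, $\pi^{-1}(K)$ has finite $\mu$-measure for every compact $K\subset\G$; hence $\barr{f_j}\in L^2(\mu)\cap L^\infty$, and since $T^{n}$ is both an $L^2(\mu)$-contraction and an $L^\infty$-contraction, all the $\phi^{n}_j$ lie in $L^2(\mu)\cap L^\infty$, so the inner products above make sense.

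The core step is then a downward induction on $j$ showing that $\phi^{n}_j$ converges strongly to $\barr{\psi_j}$ in the sense introduced just before Definition~\ref{def:mosco}, i.e.\ $\norm{\phi^{n}_j-\psi_j\circ\pi}_{L^2(\mu)}\conv{n}{+\infty}0$. The case $j=k$ is trivial, $\phi^{n}_k=\barr{f_k}$ being a constant sequence. For the inductive step, assuming $\phi^{n}_{j+1}\conv{n}{+\infty}\barr{\psi_{j+1}}$ strongly, the semigroup convergence theorem recalled above (applied exactly as in \eqref{eq:convsg}) yields $T^{n}_{t_{j+1}-t_j}\phi^{n}_{j+1}\conv{n}{+\infty}\barr{T_{t_{j+1}-t_j}\psi_{j+1}}$ strongly in $L^2(\mu)$; multiplying by the fixed bounded function $\barr{f_j}$ preserves strong convergence, and since the pull-back $\pi_*$ is multiplicative, $\barr{f_j}\cdot\barr{T_{t_{j+1}-t_j}\psi_{j+1}}=\barr{\psi_j}$, which closes the induction.

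Finally, applying this with $j=1$ and then $T^{n}_{t_1}$ gives $T^{n}_{t_1}\phi^{n}_1\conv{n}{+\infty}\barr{T_{t_1}\psi_1}$ strongly in $L^2(\mu)$, while $g_n$ is bounded in $L^2(\mu)$ and converges to $g$ weakly in the sense introduced before Definition~\ref{def:mosco}. Splitting $\bra{T^{n}_{t_1}\phi^{n}_1,g_n}=\bra{T^{n}_{t_1}\phi^{n}_1-\barr{T_{t_1}\psi_1},g_n}+\bra{\barr{T_{t_1}\psi_1},g_n}$, the first term is bounded by $\norm{T^{n}_{t_1}\phi^{n}_1-\barr{T_{t_1}\psi_1}}_{L^2(\mu)}\sup_n\norm{g_n}_{L^2(\mu)}\to0$ and the second converges to $\bra{\barr{T_{t_1}\psi_1},\barr g}_{L^2(\mu)}=\bra{T_{t_1}\psi_1,g}_{L^2(\G)}$ because $\barr{T_{t_1}\psi_1}$ is the pull-back of an $L^2(\G)$ function; this is precisely the desired right-hand side. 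I expect the only real obstacle to be bookkeeping: tracking the two state spaces, keeping every function produced by the nested semigroups inside $L^2(\mu)$ (handled by the finiteness of $\mu$ on $\pi^{-1}(K)$), and checking that strong convergence genuinely passes both through the semigroups $T^{n}$ (the content of the Mosco-convergence machinery) and through multiplication by the test functions. The non-symmetry of the forms plays no role here, since one never needs to move $T^{n}$ onto $g_n$.
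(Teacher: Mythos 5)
Your argument is essentially the paper's own proof: the paper likewise writes the finite-dimensional expectation via the Markov property as a nested semigroup expression $\int T^{n}_{t_1}(\barr{f_1}T^{n}_{t_2-t_1}\barr{f_2}\cdots)g_n\,\dint\mu$ (done there for $N=2$, with the general case by "repeated application" of \eqref{eq:convsg}), and concludes from the strong convergence of the semigroups supplied by the Mosco-convergence together with the weak $L^2$-convergence of $g_n$. Your version is just a slightly more careful rendering of the same route --- the explicit downward induction for general $k$, the reduction to compactly supported $f_j$ with the $L^\infty$-contraction to keep everything in $L^2(\mu)$ --- which tidies up integrability points the paper passes over silently.
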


By this proposition, we mean the following: for all $N>0$, $0<t_1<t_2<\cdots<t_N$, and $f_1,\cdots f_N \in  C_{b}(\G)$ then 
\be
\bbE_{\nu_n}[f_1(Z^{\al_n}_{t_1})f_2(Z^{\al_n}_{t_2})\cdots f_N(Z^{\al_n}_{t_N})]\conv{n}{+\infty}
\bbE_{\nu}[f_1(Z_{t_1})f_2(Z_{t_2})\cdots f_N(Z_{t_N})].
\ee

\begin{proof}
Let us prove the proposition for $N=2$, for the sake of simplicity.
We use the Markov property of the process $Y^{\al_n}$ for all $n$. Then 
\begin{align}\nonumber
\bbE_{\nu_n}[f_1(Z^{\al_n}_{t_1})f_2(Z^{\al_n}_{t_2})]
&=\bbE_{\nu_n}\cro{\barr{f_1}(Y^{\al_n}_{t_1})\barr{f_2}(Y^{\al_n}_{t_2})}
=\bbE_{\nu_n}\cro{\barr{f_1}(Y^{\al_n}_{t_1})\bbE_{Y^{\al_n}_{t_1}}[\barr{f_2}(Y^{\al_n}_{t_2})]}\\\nonumber
&=\bbE_{\nu_n}\cro{\barr{f_1}(Y^{\al_n}_{t_1})T^n_{t_2-t_1}\barr{f_2}(Y^{\al_n}_{t_1})}
=\int_{\bbR^2}T^n_{t_1}(\barr{f_1}T^n_{t_2-t_1}\barr{f_2})g_n\dint \mu\\\nonumber
&\conv{n}{+\infty}\int_{\bbR^2}\barr{T_{t_1}(f_1T_{t_2-t_1}f_2)}\barr{g}\dint \mu
=\int_{\G}T_{t_1}(f_1T_{t_2-t_1}f_2)\dint \nu\\
&\phantom{\conv{n}{+\infty}}=\bbE_{\nu}[f_1(Z_{t_1})f_2(Z_{t_2})].
\end{align}
The convergence comes from the fact that $g_n$ weakly converge to $\barr g$ and the fact that 
\be
T^n_{t_1}(\barr{f_1}T^n_{t_2-t_1}\barr{f_2})\conv{n}{+\infty}\barr{T_{t_1}(f_1T_{t_2-t_1}f_2)}
\ee
strongly in $L^2(\mu)$ by a repeated application of Equation \eqref{eq:convsg}.
\end{proof}

\subsection{Convergence in law}

The tightness of the law of $(Z^{\al_n})_{n}$  follows from the Lemma 3.2 of Chapter 8 of \cite{freidlin.wentzell12} for the case $e=0$. With Proposition \ref{prop:fdimcv} it gives the weak convergence of the law of the processes as $\al_n$ goes to $0$.

\begin{proposition}[Tightness, Freidlin Wentzell Lemma 3.2 Chapter 8]\label{prop:tight}
The family of distributions of $(Z^{\al_n})_{n}$ in the space $C(\bbR^+; \G)$ is tight.
\end{proposition}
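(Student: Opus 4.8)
The plan is to follow the argument of Freidlin and Wentzell (\cite[Chapter~8, Lemma~3.2]{freidlin.wentzell12}) and to note that the friction term $e$, being bounded by Assumption~\ref{ass.1}, enters only as a bounded drift and therefore does not affect any of the estimates. Since the topology of $\G$ is by definition the one making $\pi$ continuous, each $Z^{\al_n}=\pi(Y^{\al_n})$ has continuous paths, and tightness in $C(\bbR^+;\G)$ reduces to two standard requirements: \textbf{(i) compact containment} --- for every $T,\eta>0$ there is a compact $K\subset\G$ with $\inf_n\bbP\pare{Z^{\al_n}_t\in K\ \text{for all }t\le T}\seg 1-\eta$ --- and \textbf{(ii) control of oscillations} --- for every $T,\eta>0$, $\lim_{\de\to0}\sup_n\bbP\pare{w_T(Z^{\al_n},\de)\seg\eta}=0$, where $w_T(\cdot,\de)$ is the modulus of continuity on $[0,T]$ with respect to a fixed metric $d_\G$ generating the topology of $\G$ (one may take for $d_\G$ the geodesic distance on the tree $\G$, which is finite on each bounded part).

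The decisive observation is that the real-valued process $\xi^{\al_n}_t:=H(Y^{\al_n}_t)$ has $\al$-independent coefficients:
\be
\xi^{\al_n}_t=\xi^{\al_n}_0+\int_0^t\pare{-\grad H(Y^{\al_n}_s)\cdot e(Y^{\al_n}_s)+\e\D H(Y^{\al_n}_s)}\ds+\int_0^t\grad H(Y^{\al_n}_s)\cdot\dint B_s.
\ee
Writing $H_0$ for a lower bound of $H$, the assumption that $H$ has bounded Hessian gives the elementary inequality $\abs{\grad H}^2\ieg 2M_2(H-H_0)$; hence, using also that $e$ and $\D H$ are bounded, the drift of $\xi^{\al_n}$ is dominated by $C(1+\xi^{\al_n}_s-H_0)$ and its quadratic-variation density by $2M_2(\xi^{\al_n}_s-H_0)$. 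A Gronwall argument then produces, uniformly in $n$, control of $\sup_{t\le T}\xi^{\al_n}_t$ and a Kolmogorov-type bound $\bbE\cro{\abs{\xi^{\al_n}_{t}-\xi^{\al_n}_{s}}^4}\le C(T)\abs{t-s}^2$ for $s,t\le T$ (the contribution of the initial data being absorbed using that $\nu_n$, being weakly convergent, is tight). By Chebyshev's inequality the first bound yields $\sup_n\bbP\pare{\sup_{t\le T}\xi^{\al_n}_t\seg M}\to0$ as $M\to\infty$, whence compact containment of $Z^{\al_n}$ in the compact set $K_M:=\pi(\{H_0\le H\le M\})\subset\G$, i.e.\ (i); the Kolmogorov bound gives, by the classical criterion, $\lim_{\de\to0}\sup_n\bbP\pare{w_T(\xi^{\al_n},\de)\seg\e}=0$ for every $\e>0$.

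It remains to transfer the last estimate from $\xi^{\al_n}$ to $Z^{\al_n}$ by using the geometry of $\G$. On the $H$-range $[H_0,M]$ there are only finitely many critical values $c_1<\dots<c_p$; fixing $\e_0>0$ smaller than $\min_j(c_{j+1}-c_j)$, any interval $[a,a+\e]$ with $\e\le\e_0$ meets at most one critical value and hence the $\pi$-images of at most $N$ vertices, $N$ being a finite bound (on $[H_0,M]$) for the number of connected level sets attached to a single critical value. On the event $\{Z^{\al_n}([0,T])\subset K_M\}$, if $\xi^{\al_n}$ has oscillation at most $\e\le\e_0$ on a subinterval $[s,t]$, then $Z^{\al_n}([s,t])$, being a connected subset of the tree $\G$ contained in a strip $\{x:H(x)\in[a,a+\e]\}$, is a subtree, and the geodesic between any two of its points crosses at most $N$ vertices, all with $H$-value in that strip; hence its $d_\G$-diameter is at most $(N+1)\e$. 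Therefore $w_T(Z^{\al_n},\de)\le(N+1)\,w_T(\xi^{\al_n},\de)$ on $\{Z^{\al_n}([0,T])\subset K_M\}\cap\{w_T(\xi^{\al_n},\de)\le\e_0\}$, and (ii) follows by first choosing $M$ large enough that $K_M$ contains the path on $[0,T]$ with probability arbitrarily close to $1$ uniformly in $n$, and then letting $\de\to0$. The only genuinely delicate step is this geometric transfer --- the quantitative statement that the projected process cannot travel a large $d_\G$-distance without a comparable excursion of the conserved quantity $H$; the probabilistic ingredients are routine consequences of the $\al$-independence of the law of $\xi^{\al_n}=H(Y^{\al_n})$ together with the boundedness of $e$ and of the second derivatives of $H$, and reduce for $e=0$ precisely to \cite[Chapter~8, Lemma~3.2]{freidlin.wentzell12}.
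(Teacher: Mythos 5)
Your route is genuinely different from the paper's. The paper simply invokes Freidlin--Wentzell (\cite{freidlin.wentzell12}, Lemma 3.2 of Chapter 8): compact containment is obtained from a bound on $\max_{0\ieg t\ieg T}H(Y^{\al_n}_t)$, and equicontinuity is obtained not by a modulus-of-continuity estimate for $H(Y^{\al_n})$ but by the Stroock--Varadhan submartingale criterion \cite{stroock.varadhan79}: for each $a$ in a compact of $\G$ and each small $\de$ one constructs $f^a_\de$ on $\G$ with $f^a_\de(a)=1$, $f^a_\de=0$ outside the $\rho$-ball of radius $\de$, such that $f^a_\de(Z^{\al_n}_t)+A_\de t$ is a submartingale with $A_\de$ uniform in $n$; the uniformity in $n$ is automatic because $f^a_\de\circ\pi$ is constant on connected level sets, so the $\al^{-1}A\grad H$ term is killed by the generator. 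Your proposal replaces this by a Kolmogorov-type moment bound for $\xi^{\al_n}=H(Y^{\al_n})$ together with a geometric lemma transferring the oscillation of $\xi^{\al_n}$ to the $\rho$-displacement of $Z^{\al_n}$ on the tree; this is more explicit and quantitative, but it needs more structure on $\G$ than the submartingale route does.

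Two points need attention. First, the geometric transfer silently assumes that the graph is finite in each bounded $H$-range: finitely many critical values in $[H_0,M]$, a uniform bound $N$ on the number of vertices lying over one critical value, and a positive gap $\e_0$ between consecutive critical values. None of this follows from Assumption \ref{ass.1} ($C^2$, bounded Hessian, compact level sets); the paper only asserts that the set of edges is \emph{countable}, and a $C^2$ Hamiltonian may have critical values accumulating in $[H_0,M]$, in which case $\e_0$ and $N$ do not exist and the inequality $w_T(Z^{\al_n},\de)\ieg(N+1)\,w_T(\xi^{\al_n},\de)$ breaks down; the same finiteness is used when you declare $K_M=\pi(\acc{H_0\ieg H\ieg M})$ compact in $\G$. (The paper's appeal to Freidlin--Wentzell carries a comparable implicit genericity hypothesis, but if your proof relies on it you must state it.) Second, the probabilistic step is overstated: the law of $\xi^{\al_n}$ is \emph{not} $\al$-independent --- only its drift and diffusion coefficients, as functions of $Y^{\al_n}$, are --- and weak $L^2(\mu)$-convergence of the densities $g_n$ yields tightness of the initial laws but no moments of $H(Y^{\al_n}_0)$, so the bounds $\bbE\cro{\abs{\xi^{\al_n}_t-\xi^{\al_n}_s}^4}\ieg C(T)\abs{t-s}^2$ are not available as written; you must localize, e.g.\ restrict to $\acc{H(Y^{\al_n}_0)\ieg R}$ (high probability uniformly in $n$ by tightness of $\nu_n$) and stop the process at the exit from $\acc{H\ieg M}$, where the coefficients of $\xi^{\al_n}$ are bounded uniformly in $\al$, and then let $M,R\to\infty$. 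With these repairs, and the finiteness hypothesis made explicit, your argument goes through; without the finiteness hypothesis the geometric transfer is the step that genuinely fails.
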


The proof follows usual ideas to prove the tightness, and comes ultimately from Ascoli-Arz\'ela Theorem. The first argument is given by: for all $T>0$ and $\de>0$, there are $H_0$ and $n_0$, such that:
\be
n\seg n_0 \Longrightarrow \bbP_x[\max_{0\ieg t\ieg T}H(Y^{\al_n,\e}_t)\seg H_0]\ieg\de.
\ee

The equicontinuity condition comes from Stroock-Varadhan \cite{stroock.varadhan79}. Let us first precise the metric used on $\G$: the distance $\rho(y,y')$ is the minimum distance of the paths on $\G$. If $y=(H_0,i_0)$ and  $y'=(H'_0,i'_0)$, if $\g$ is a path from $y$ to $y'$  passing through the vertices $O_1, O_2\dots O_l$, we denote $\wt\rho$ the length of this path:
\be
\wt\rho(\g)=\abs{H_0-H(O_1)}+\sum_{i=1}^{l-1}\abs{H(O_i)-H(O_{i+1})}+\abs{H'_0-H(O_l)}.
\ee
Then $\rho(y,y')$ is simply the minimum of the lengths of all such paths.

The equicontinuity then follows from this: for all compact $K$ of $\G$ and all $\de$ sufficiently small there exists a constant $A_\de$ such that for every $a\in K$ there exists a function $f_{\de}^a$ with $f_\de^a(a)=1, f_\de^a(z)=0$ for $\rho(z,a)>\de$, and $0\ieg f^a_{\de}\ieg1$, such that, for all $n$, $f_{\de}^a(Z^{\al_n,\e}_t)+A_\de t$ is a submartingale. 

Note that their proof use the fact that $H$ has bounded second derivatives. Using our Assumption \ref{ass.1} on $e$, the generalization is straightforward.

We can then conclude by the convergence of the law of the processes $(Z^{\al_n})_n$ to the law of the process $Z$.
\begin{theorem}\label{th:conv}
For all $g_n$ weakly converging in $L^2$ to $g\circ\pi$, and such that $g_n$ and $g\circ\pi$ are densities (with respect to $\mu$) of probability laws in $\bbR^2$, then the processes $(\pi(Y^{\al_n,\e}))$ where $Y^{\al_n,\e}_0$ is distributed as $g_n\dint\mu$, converge in law to the process $Z^\e$ with initial law given by $g\dint(\pi_*\mu)$.
\end{theorem}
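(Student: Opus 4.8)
The plan is to assemble Theorem \ref{th:conv} from the two ingredients already proved: the convergence of finite dimensional distributions (Proposition \ref{prop:fdimcv}) and the tightness of the laws (Proposition \ref{prop:tight}). This is the standard route to convergence in law of stochastic processes: tightness gives relative compactness of the family of laws in the space $C(\bbR^+;\G)$ equipped with the topology of uniform convergence on compact time intervals, and convergence of finite dimensional distributions identifies the unique possible limit point.

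First I would make precise the setup: by hypothesis $g_n$ converges weakly in $L^2(\mu)$ to $\barr g=g\circ\pi$, both being densities of probability measures with respect to $\mu$, so $\bra{g_n,\1}_{L^2(\mu)}=\bra{\barr g,\1}_{L^2(\mu)}=1$. As explained in the paragraph preceding Proposition \ref{prop:fdimcv}, this implies that $\nu_n=\ccL(Y^{\al_n,\e}_0)$ converges weakly to the probability measure $\barr g\dint\mu$ on $\bbR^2$, hence $\ccL(Z^{\al_n,\e}_0)=(\pi)_*\nu_n$ converges weakly to $\wt\nu=g\dint(\pi_*\mu)$ on $\G$. Thus the hypotheses of Proposition \ref{prop:fdimcv} are met, and the finite dimensional marginals of $(Z^{\al_n,\e})=(\pi(Y^{\al_n,\e}))$ converge to those of the process $Z^\e$ associated to the Dirichlet form $\cE$ with initial law $\wt\nu$.

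Next I would invoke Proposition \ref{prop:tight}: the family of laws of $(Z^{\al_n,\e})_n$ in $C(\bbR^+;\G)$ is tight, hence by Prokhorov's theorem relatively compact. Let $Q$ be any subsequential weak limit along some subsequence $\al_{n_k}$. Along that subsequence the finite dimensional distributions still converge to those of $Z^\e$, and since weak convergence in $C(\bbR^+;\G)$ implies convergence of finite dimensional distributions (evaluation maps at finitely many times being continuous), the finite dimensional distributions of $Q$ coincide with those of $Z^\e$. As a probability measure on the path space $C(\bbR^+;\G)$ is determined by its finite dimensional distributions, $Q=\ccL(Z^\e)$. Since every subsequential limit equals $\ccL(Z^\e)$ and the family is relatively compact, the whole sequence $\ccL(Z^{\al_n,\e})$ converges weakly to $\ccL(Z^\e)$, which is the claim.

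The only genuinely delicate point is the passage from convergence of finite dimensional distributions of $Z^{\al_n,\e}$ to those of the process $Z^\e$ built from $\cE$; but this is exactly the content of Proposition \ref{prop:fdimcv}, which itself rests on the Mosco-convergence of $E_{\al_n}$ to $\cE$ and the resulting strong convergence of the semigroups $T^n_t\to T_t$. So in this final theorem there is essentially no remaining obstacle: it is a clean combination of tightness plus identification of the limit via finite dimensional distributions, and I would state it as such, perhaps with a one-line reminder that a measure on $C(\bbR^+;\G)$ is determined by its finite dimensional distributions and that tightness upgrades finite-dimensional convergence to convergence in law.
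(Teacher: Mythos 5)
Your proposal is correct and follows exactly the paper's route: the paper proves Theorem \ref{th:conv} in one line by combining Proposition \ref{prop:fdimcv} (convergence of finite dimensional marginals, itself resting on Mosco-convergence) with Proposition \ref{prop:tight} (tightness). Your write-up merely spells out the standard Prokhorov-plus-identification argument that the paper leaves implicit, so there is no substantive difference.
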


\begin{proof}
The theorem follows from Proposition \ref{prop:fdimcv} and Proposition \ref{prop:tight}.
\end{proof}

\section{Identification of the limiting process}\label{sec:iden}

We want to identify the limiting process on the space $\G$ by identifying the infinitesimal operator and its domain associated to the Dirichlet form $\cE$.
The domain of the operator $\cL$ associated to the Dirichlet form could be defined as (\cite{ma.rockner92} Proposition I 2.16 p.23):
\be\label{eq:domain}
\ccD(\cL)=\acc{u\in H^1(\G), v\mapsto\cE(u,v) \text{ is continuous w.r.t. $\norm{\cdot}_{L^2(\G)}$ on } H^1(\G)}.
\ee
Accordingly, the infinitesimal generator applied to a function $u\in \ccD(\cL)$ is given as $\cL u=w$ where $w\in L^2(\G)$ is the unique function such that, for all $v\in H^1(\mu)$
\be\label{eq:geninf}
\cE(u,v)=-\bra{w,v}_{L^2(\G)}.
\ee
In fact, we need to write the Dirichlet form as an integral with respect to the projected measure.

%

We denote $\tht$ the function defined on $\G$ such that $h=\tht\circ\pi$.
%

For all $i$, for all $m\in\mathring I_i$, we define 
\begin{align}
T(m,i)&=\oint_{C_i(m)}\frac{\dl}{\abs{\grad H}}&
\dint \om_{(m,i)}&=\frac1{T(m,i)}\frac{\dl}{\abs{\grad H}}.
\end{align}
Note that $T(m,i)$ is the period of the orbit of the Hamiltonian flow along the orbit $C_i(m)$ and $\om_{(m,i)}$ is the invariant measure of mass $1$ for this flow on $C_i(m)$, we would call it the normalized Liouville measure.
Let us also define
\begin{align}
S^2(m,i)
&=\oint_{C_i(m)}\abs{\grad H}^2\dint\om_{(m,i)}
=\frac1{T(m,i)}\oint_{C_i(m)}\abs{\grad H}\dl\\
B^0(m,i)
&=-\oint_{C_i(m)}e\cdot\grad H\dint \om_{(m,i)}
=-\frac1{T(m,i)}\oint_{C_i(m)}e\cdot\grad H\frac{\dl}{\abs{\grad H}}\\
B^1(m,i)
&=\oint_{C_i(m)}\D H\dint\om_{(m,i)}
=\frac1{T(m,i)}\oint_{C_i(m)}\D H\frac{\dl}{\abs{\grad H}}.
\end{align}

We define, for all vertices $O$ and all edges $I_i$ incident to $O$,
\begin{align}
\al_i(O)&=\lim_{m\to H(O), m\in I_i}\int_{C_i(m)}\abs{\grad H}\dl,&
\g(O)&=
-\int_{\pi^{-1}(O)}\grad\cdot e\dx.
\end{align}
Note that due to Assumption \ref{ass.2}, $\g(O)$ is positive. We consider $J(O)$ the set of edges incident to $O$.
We define the partition of $J(O)$ into two sets
\begin{align}
J_+(O)&=\acc{i|i\in J(O), m\seg H(O), \forall m\in I_i},\\
J_-(O)&=\acc{i|i\in J(O), m\ieg H(O), \forall m\in I_i}.
\end{align}

For a function $u$ on $\G$, we denote $u_i=u_{|I_i}$ its restriction on the edge $I_i$ (which is a real interval).
We have the main theorem of this section.
\begin{theorem}\label{th:gendom}
The domain $\ccD(\cL)\subset H^1(\G)$ is the set of real-valued functions $u$ on $\G$ such that:
\begin{enumerate}
\item $u$ is continuous on $\G$, for all $i$, $u_i$ is in $H^2(I_i)$,
\item for all $(m,i)$ in the interior of an edge (i.e. $m\in \mathring{I_i}$), the differential operator
\begin{align}\label{eq:opgraph}
\cL^iu_i(m)
=\e S^2_iu''_i
+(B^0_i+\e B^1_i)u'_i
\end{align}
defines a continuous function on $\mathring I_i$,
\item these functions $\cL_i u_i$ have a common limit at a common vertex $O$, denoted $\cL u(O)$,
\item at a vertex $O$, we have the relation
\begin{multline}\label{eq:gluecond}
\g(O)u(O)+\e\pare{\sum_{i\in J_+(O)}\al_i(O)D_iu(O)-\sum_{i\in J_-(O)}\al_i(O)D_iu(O)}\\=-\abs{\pi^{-1}(O)}\cL u(O),
\end{multline}
where $\abs{\pi^{-1}(O)}$ is the Lebesgue measure (area) of $\pi^{-1}(O)$
\item $\cL u$ is in $L^2(\G)$.
\end{enumerate}
\end{theorem}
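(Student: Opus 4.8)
The plan is to characterize $\ccD(\cL)$ via \eqref{eq:domain}--\eqref{eq:geninf}, by starting from the explicit expression of $\cE$ in Theorem~\ref{th:DF} and rewriting every term as an integral on $\G$ against the pushforward measure $\pi_*\mu$ using the coarea formula, then performing integration by parts on each edge and collecting the boundary contributions at the vertices.

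\emph{Step 1: coarea and the pushforward measure.} First I would apply the coarea formula along the flow constructed in Lemma~\ref{lem:edge} to write, for $f\in L^1(\mu)$ supported away from the vertices,
\[
\int_{A_i} f\,\dint\mu=\int_{\mathring I_i}\pare{\oint_{C_i(m)}f\,\frac{\dl}{\abs{\grad H}}}\tht(m,i)\,\dm
=\int_{\mathring I_i} T(m,i)\,\tht(m,i)\pare{\oint_{C_i(m)}f\,\dint\om_{(m,i)}}\dm,
\]
so that $\pi_*\mu$ restricted to the edge $I_i$ has density $m\mapsto T(m,i)\tht(m,i)$ (against Lebesgue $\dm$), while on each $O\in\cV_*$ it carries the atom $\mu(\pi^{-1}(O))=\abs{\pi^{-1}(O)}$ (the last equality since $h$ is constant on $\pi^{-1}(O)$ — one should track the factor $\tht(O)$, which we may normalize). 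Using $\abs{\grad\barr u}^2=\abs{\grad H}^2\,\barr{\abs{u'}^2}$ and the definitions of $S^2$, the leading term of $\cE^s$ becomes $\e\sum_i\int_{\mathring I_i} S^2(m,i)\,u_i'v_i'\,T\tht\,\dm$. The zeroth-order term of $\cE^s$ and the antisymmetric term $\cE^a$ are handled the same way: using $h^{-1}\grad\cdot(hF)=\grad\cdot e+\e h^{-1}\D h+\dots$ on the interior of each $A_i$ (where $h=\psi_i(H)$) one identifies, after a further integration by parts in $m$ absorbing the $\D H$ terms, the drift coefficient $B^0_i+\e B^1_i$; the vertex pieces of $\cE^s$ contribute the $\grad\cdot e$ integral over $\pi^{-1}(O)$, i.e.\ $\g(O)$.

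\emph{Step 2: integration by parts on edges and the gluing condition.} Having written $\cE(u,v)=\e\sum_i\int_{\mathring I_i} S^2_i u_i'v_i'\,T\tht\,\dm+(\text{lower order})-\frac12\sum_{O\in\cV_*}\int_{\pi^{-1}(O)}\tht^{-1}\grad\cdot(hF)\,\barr{uv}$, I would integrate by parts in $m$ on each edge. The requirement in \eqref{eq:geninf} that $v\mapsto\cE(u,v)$ be $L^2(\G)$-continuous forces the boundary terms at each \emph{interior} edge point to cancel (giving $u_i\in H^2(I_i)$ and the differential operator $\cL^i$ of \eqref{eq:opgraph}, after dividing by the density $T\tht$) and forces the remaining boundary terms \emph{at each vertex} $O$ — which by continuity of $v$ on $\G$ all multiply the single value $v(O)$ — to sum, together with the atomic part, to $-\,\abs{\pi^{-1}(O)}\,\cL u(O)\,v(O)$. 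Writing the edge boundary term as $\pm\e\,\alpha_i(O)\,D_iu(O)$ (the sign according to whether $i\in J_+(O)$ or $J_-(O)$, since $S^2_i T\tht\to$ a multiple of $\alpha_i(O)=\lim\oint_{C_i(m)}\abs{\grad H}\dl$) and the atomic contribution as $\g(O)u(O)$, one gets exactly \eqref{eq:gluecond}; conversely, any $u$ satisfying (1)--(5) makes $v\mapsto\cE(u,v)=-\int_\G(\cL u)v\,\dint(\pi_*\mu)$, so $u\in\ccD(\cL)$ with $\cL u$ as stated. Continuity of $u$ on $\G$ itself is already built into membership in $H^1(\G)=\pi_*^{-1}(H^1(\mu))$ together with the one-dimensional Sobolev embedding on each edge plus the matching of traces forced by $\barr u\in H^1(\mu)$ across the vertices.

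\emph{Main obstacle.} The delicate point is the analysis \emph{at the vertices}, in two respects. First, one must justify that the edge boundary terms have genuine limits as $m\to H(O)$: this is where the finiteness of $\alpha_i(O)$ and a careful use of the bounded-second-derivative hypothesis on $H$ (Assumption~\ref{ass.1}) enter, controlling $\oint\abs{\grad H}\dl$ and $\oint\D H\,\dl/\abs{\grad H}$ as the orbit degenerates onto the critical level set; near a nondegenerate saddle the periods $T(m,i)$ blow up logarithmically while $S^2_i T\tht$ stays bounded, and these asymptotics must be reconciled. Second, one must show the vertex relation is not only necessary but captures \emph{all} the constraint — i.e.\ that test functions $v\in H^1(\G)$ separate the vertices and edges sufficiently to read off \eqref{eq:opgraph} and \eqref{eq:gluecond} independently; this uses the density/regularity of $C_c(\G)\cap H^1(\G)$ from Theorem~\ref{th:DF} and localization. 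The interior integration-by-parts and coarea bookkeeping are routine by comparison.
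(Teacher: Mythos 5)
Your overall route is the same as the paper's (coarea formula to push $\cE$ and $\mu$ down to $\G$, integration by parts in $m$ on each edge, collection of vertex terms, identification of the coefficients with $S^2$, $B^0$, $B^1$), but there is a concrete gap in your vertex bookkeeping. After the coarea step, the antisymmetric part of the form reads $\frac12\sum_i\int_{I_i} b_i\cro{v_iu_i'-u_iv_i'}\dm$ with $b_i(m)=\oint_{C_i(m)}hF\cdot\grad H\,\frac{\dl}{\abs{\grad H}}$, and the term containing $v_i'$ must itself be integrated by parts; this produces, at every vertex $O$, boundary terms $\bt_i(O)\,u(O)v(O)$ with $\bt_i(O)=\lim_{m\to H(O)}b(m,i)$, which are in general nonzero edge by edge. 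Your proposal never mentions these terms: you attribute the $\g(O)u(O)$ contribution entirely to the atomic piece of $\cE^s$ over $\pi^{-1}(O)$, but that piece carries a factor $\frac12$ and yields only part of the coefficient, and the leftover $\bt_i(O)$ terms would pollute the gluing relation. The paper resolves this by applying the divergence theorem to $hF$ on a shrinking neighborhood $\Om_\de$ of $\pi^{-1}(O)$, which identifies the signed sum $\sum_{i\in J_+(O)}\bt_i(O)-\sum_{i\in J_-(O)}\bt_i(O)$ with $\int_{\pi^{-1}(O)}\grad\cdot(hF)\dx$, i.e.\ with the same quantity $\tht(O)\g(O)$ (up to sign) appearing in the atomic part; only after this identification do the drift boundary terms and the atomic term combine to give exactly \eqref{eq:gluecond}. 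Without this step the gluing condition you would obtain is wrong, so this is a missing idea rather than routine bookkeeping.

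A secondary, lesser point: on the edges you assert that the zeroth-order terms disappear and that the drift comes out as $B^0_i+\e B^1_i$ ``after a further integration by parts absorbing the $\D H$ terms,'' but the mechanism is the differentiation formula for level-set integrals (Lemma \ref{lem:derive}), which gives $a_i'(m)=\oint_{C_i(m)}\cro{\grad h\cdot\grad H+h\D H}\frac{\dl}{\abs{\grad H}}$ and $b_i'=c_i$; the second identity is precisely what kills the potential zeroth-order (killing) term in \eqref{eq:opgraph}, and the first is where $\e B^1_i$ originates. You should state and use this tool explicitly, since the coefficients in \eqref{eq:opgraph} cannot be read off from the coarea formula alone. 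Your ``main obstacle'' discussion (asymptotics of $T(m,i)$ near a saddle) is not where the paper's proof actually spends its effort; the limits needed are only those of $a_i$ and $b_i$, which exist by continuity of the level-set integrals, not of $T$ itself.
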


\begin{remark}
This theorem shows that the underlying process is specified on two different domains : on the edges, we have a diffusion given by the restriction of the infinitesimal generator (Equation \eqref{eq:opgraph}); at each vertex, we have a gluing condition (Equation \eqref{eq:gluecond}), deduced from the domain, defining the properties of the vertex. 

\begin{itemize}
\item Let us notice that the process $Z$ does not depend on the choice of the measure $\mu$. This confirms that $\mu$ is only a convenient tool we use to define the Dirichlet form.
\item The coefficients of the diffusion on each edge are averages, with respect to the measure $\dint \om$, of, respectively, $\abs{\grad H}^2, -e\cdot\grad H$ and $\D H$ on each connected level set $C_i(m)$.
\item This Theorem also gives a way to construct the reference measure $\mu$. In fact on each edge, we have a one dimensional diffusion therefore a natural candidate for a invariant measure which has a density with respect to the Lebesgue measure. Due to the tree-like structure of $\G$, we obtain a invariant measure on $G$ by adding suitable constants to these densities on each edge, then the lift on $\bbR^2$ give us a suitable candidate for $\mu$.
\end{itemize}
\end{remark}

First we derive from the Dirichlet form, the infinitesimal generator and its domain. Then we deduce the diffusion process on the edges and its behavior when it reaches a vertex.

\subsection{Proof of Theorem \ref{th:gendom}}

In order to compute the different coefficients of the generator, we need a lemma.
\begin{lemma}\label{lem:derive}
For a vector field $G$ of class $C^1$ on $A_i$, we have
\be\label{eq:derive.1}
\frac{\dint}{\dm}\oint_{C_i(m)}G\cdot\frac{\grad H}{\abs{\grad H}}\dl
=\oint_{C_i(m)}\grad\cdot G\frac{\dl}{\abs{\grad H}}.
\ee
For a function $g$, of class $C^1$ on $A_i$, we have
\be\label{eq:derive.2}
\frac{\dint}{\dm}\oint_{C_i(m)}g\abs{\grad H}\dl
=\oint_{C_i(m)}\cro{\grad g\cdot\grad H+g\D H}\frac{\dl}{\abs{\grad H}}.
\ee
\end{lemma}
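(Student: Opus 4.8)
The plan is to prove both identities by the coarea formula together with the fundamental theorem of calculus, using the level-set flow $\phi_t$ introduced in the proof of Lemma \ref{lem:edge}. Recall that on $A_i$ the flow $\phi_t$ of $\dot y = \grad H/|\grad H|^2$ satisfies $H(\phi_t(y)) = H(y) + t$ and maps $C_i(m)$ onto $C_i(m+t)$. The key observation is that differentiating an integral over $C_i(m)$ with respect to $m$ amounts to differentiating an integral over the fixed curve $C_i(m_0)$ of a pulled-back integrand along the flow, so the $m$-derivative becomes a Lie derivative along the vector field $N := \grad H/|\grad H|^2$.

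First I would set up the change of variables. For a fixed reference value $m_0 \in \mathring I_i$, the map $(s,y) \mapsto \phi_{s}(y)$ parametrizes a neighbourhood of $C_i(m_0)$ in $A_i$ by $(\mathring I_i - m_0) \times C_i(m_0)$, and under this diffeomorphism the domain $\{H \in [m_0, m]\}$ corresponds to $[0, m-m_0] \times C_i(m_0)$. The coarea formula gives, for any integrable $\Phi$,
\be
\int_{\{H \in [m_0,m]\}} \Phi \, \dx = \int_{m_0}^{m} \pare{\oint_{C_i(s)} \Phi \, \frac{\dl}{\abs{\grad H}}} \ds,
\ee
so that $\frac{\dint}{\dm} \oint_{C_i(m)} \Phi \frac{\dl}{|\grad H|}$ equals $\oint_{C_i(m)} \cL_N \Phi \, \frac{\dl}{|\grad H|}$ where $\cL_N$ is the Lie derivative along $N$ of the relevant density (i.e.\ of the $1$-form $\Phi \, \dl / |\grad H|$ viewed as the flux form, or equivalently $\Phi \, \dx$ as a $2$-form). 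For \eqref{eq:derive.1}, I write $G \cdot \frac{\grad H}{|\grad H|} \dl$ as the flux of $G$ through $C_i(m)$; the Lie derivative of the flux $2$-form $\omega_G := \iota_G(\dx)$ along $N$ is $\cL_N \omega_G = \dint(\iota_N \iota_G \dx) + \iota_N \dint \omega_G = \dint(\iota_N\iota_G\dx) + (\grad\cdot G)\,\iota_N \dx$; integrating the exact term over the closed curve $C_i(m)$ kills it by Stokes, and $\iota_N \dx$ restricted to $C_i(m)$ is exactly $\dl/|\grad H|$, giving \eqref{eq:derive.1}. For \eqref{eq:derive.2}, I apply the same computation to $G = g \grad H$: then $G \cdot \grad H / |\grad H| = g |\grad H|$ and $\grad \cdot G = \grad g \cdot \grad H + g \D H$, which is precisely the claimed formula.

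Alternatively, and perhaps more transparently for the write-up, I would avoid differential-forms language and argue directly: pull back by $\phi_{m-m_0}$ to reduce to a fixed curve, differentiate under the integral sign (justified by the $C^1$ regularity of $G$, $g$, $H$ on $A_i$ and compactness of $C_i(m)$ for $m$ in a compact subinterval), and identify the resulting boundary-free divergence integral via the classical divergence theorem applied to the annular region between two nearby level curves, sending the thickness to zero. The equality $\frac{\dint}{\dm}\oint_{C_i(m)} G \cdot \frac{\grad H}{|\grad H|}\dl = \lim_{\eta\to 0}\frac1\eta \int_{\{m \le H \le m+\eta\}} \grad\cdot G \,\dx$ follows from the divergence theorem once one checks that the outward normal to $\{m \le H \le m+\eta\}$ along $C_i(m+\eta)$ is $\grad H/|\grad H|$ and along $C_i(m)$ is $-\grad H/|\grad H|$, plus the coarea identity for the right-hand volume integral.

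The main obstacle I anticipate is purely technical rather than conceptual: justifying the interchange of $\frac{\dint}{\dm}$ with the curve integral and controlling the geometry of the level curves uniformly for $m$ in a compact subinterval of $\mathring I_i$ (in particular that $|\grad H|$ stays bounded away from $0$ on $\bigcup_{m \in [m_1,m_2]} C_i(m)$, which holds since this set is compact and contains no stationary point by the definition of $A_i$). Once uniform bounds on $|\grad H|$, its derivatives, and the arclength element of $C_i(m)$ are in hand, differentiation under the integral sign and the limiting divergence-theorem argument are routine, and \eqref{eq:derive.1}--\eqref{eq:derive.2} follow.
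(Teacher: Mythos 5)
Your proposal is correct, and its ``direct'' variant is essentially the paper's own proof: apply the divergence theorem to the region between two nearby level curves (with outward normal $\pm\grad H/\abs{\grad H}$), rewrite the volume integral of $\grad\cdot G$ via the coarea formula, and differentiate in $m$; identity \eqref{eq:derive.2} then follows by taking $G=g\grad H$ exactly as you do. The Lie-derivative/Cartan-formula version you sketch first is just an equivalent repackaging of the same computation, so there is no substantive difference from the paper.
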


The second property \eqref{eq:derive.2} is given in \cite{freidlin.wentzell12} (Lemma 1.1 p.265). We formulate a short proof.
\begin{proof}
This lemma can be proved by using the co-area formula and the divergence theorem on a domain $D_i(m_0,m)=\acc{x\in C_i(m'), m'\in[(m_0,m)]}$. Let us suppose that $m_0<m$. 
First, let us remark that the unit vector $n=\frac{\grad H}{\abs{\grad H}}$
is a normal vector to the curve $C_i(m')$ which points towards the exterior of the domain along $C_i(m)$ and inwards along $C_i(m_0)$.
Therefore, using the divergence Theorem on the domain $D_i(m_0,m)$ for the integrand $\grad \cdot G$, we obtain:
\begin{align}\nonumber
\int_{D_i(m_0,m)}\grad\cdot G\dx
&=\int_{\del D_i(m_0,m)}G\cdot\dint n_{ext}\\
&=\oint_{C_i(m)}G\cdot\frac{\grad H}{\abs{\grad H}}\dl-\oint_{C_i(m_0)}G\cdot\frac{\grad H}{\abs{\grad H}}\dl.
\end{align}
Using the co-area formula on the same integral, we have:
\begin{align}
\int_{D_i(m_0,m)}\grad\cdot G\dx
&=\int_{m_0}^m\oint_{C_i(m')}\frac{\grad\cdot G}{\abs{\grad H}}\dl\dm'.
\end{align}
The same holds if $m<m_0$. Thus the integral $\oint_{C_i(m_0)}G\cdot\frac{\grad H}{\abs{\grad H}}\dl$ is differentiable at $m_0$ and we obtain the result.

For \eqref{eq:derive.2}, we remark that
\be
g\abs{\grad H}=g\grad H\cdot\frac{\grad H}{\abs{\grad H}}
\ee
and apply \eqref{eq:derive.1} to $G=g\grad H$.
We obtain the result since $\grad\cdot\cro{g\grad H}=\grad g\cdot\grad H+g\D H$.
\end{proof}

\begin{proof}[Proof of Theorem \ref{th:gendom}]
First, we compute the generator for functions which are in $C^2(\G)$. Let us consider functions $u,v$ in $C^2(\G)$.
For the symmetric part, Equation \eqref{eq:DFprojsym}, we get
\begin{align}\nonumber
\cE^s(u,v)
&=\sum_{i\in I}\int_{A_i}\e\abs{\grad H}^2u'_i(H)v'_i(H)
-\frac1{2h} \grad\cdot(hF) u_i(H)v_i(H)\dint \mu\\
&\qquad-\frac12\sum_{O\in\cV_*}\int_{\pi^{-1}(O)}\frac1{h} \grad\cdot(hF) \barr u\barr{v}\dint \mu.
\end{align}
We use the coarea formula (since $\abs{\grad H}\neq 0$ on each $A_i$), to obtain
\begin{align}\nonumber
\cE^s(u,v)
&=\sum_{i\in I}\int_{I_i}\e u'_i(m)v'_i(m)\oint_{C_i(m)}\abs{\grad H}h\dl\\\nonumber
&\quad-u_i(m)v_i(m)\frac12\oint_{C_i(m)}\frac{\grad\cdot(hF)}{\abs{\grad H}}\dl \dm\\\nonumber
&\quad-\frac12\sum_{O\in\cV_*}u(O)v(O)\int_{\pi^{-1}(O)}\grad\cdot(hF)(x) \dx \\
&=\sum_{i\in I}\int_{I_i}\e a_iu'_iv'_i\dm
-\frac12\int_{I_i}c_iu_iv_i\dm
-\frac12\sum_{O\in\cV_*}\g(O) u(O)v(O).
\end{align}
We have denoted $a$ and $c$ the quantities
\begin{align}\label{eq:ac}
a(m,i)
&=\oint_{C_i(m)}\abs{\grad H}h\dl,&
c(m,i)
&=\oint_{C_i(m)}\frac{\grad\cdot(hF)}{\abs{\grad H}}\dl.
\end{align}

For the antisymmetric part, Equation \eqref{eq:DFprojantisym}, we get, using also the coarea formula
\begin{align}\label{eq:temp1}
\cE^a(u,v)
&=\frac12\sum_{i\in I}\int_{I_i}b_i\cro{v_iu'_i-u_iv'_i}\dm
\end{align}
where $b$ denotes the quantity
\begin{align}\label{eq:b}
b(m,i)
&=\oint_{C_i(m)}hF\cdot\grad H\frac{\dl}{\abs{\grad H}}.
\end{align}

Then, since we assume that $u\in C^2(\G)$, we have for all $i$ 
\begin{align}\label{eq:ipp.1}
\int_{I_i}a_iu_i'v_i'\dm
&=\lim_{m\to m^+_i}(a_iu'_iv_i)(m)-\lim_{m\to m^-_i}(a_iu'_iv_i)(m)-\int_{I_i}\pare{a_iu_i'}'v_i\dm
\end{align}
where we denote $I_i=\cro{m^+_i,m^-_i}$ (from Proposition \ref{prop:orbit}). Then, we get that
\be
\lim_{m\to m^+_i}a_i(m)=\tht(O)\al_i(O)
\ee
where $O$ is the vertex incident to $I_i$ at $m^i_+$. The same holds at $m^-_i$.

By summing the integrals \eqref{eq:ipp.1} over the edges of $\G$, we rewrite the first part as a sum over the vertices.
Since $v$ has a unique value at each vertex we get
\begin{align}\nonumber
\sum_i&\int_{I_i}a_iu_i'v_i'\dm
=-\sum_i\int_{I_i}\pare{a_iu_i'}'v_i\dm\\
&+\sum_{O\in\cV}v(O)\pare{\sum_{i\in J_+(O)}\tht(O)\al_i(O)D_iu(O)
-\sum_{i\in J_-(O)}\tht(O)\al_i(O)D_iu(O)}.
\end{align}
We do the same calculation for the integral \eqref{eq:temp1} and get
\begin{align}\label{eq:ipp.2}\nonumber
\sum_{i\in I}\int_{I_i}b_iu_iv'_i\dm
&=-\sum_i\int_{I_i}\pare{b_iu_i}'v_i\dm
\\&+\sum_{O\in\cV}v(O)u(O)\pare{\sum_{i\in J_+(O)}\bt_i(O)-\sum_{i\in J_-(O)}\bt_i(O)}
\end{align}
where $\bt_i(O)$ is defined as
\be
\bt_i(O)=\lim_{m\to H(O)}b(m,i).
\ee

Without loss of generality, we suppose for a moment that $H(O)=0$. Let us choose $\de_0>0$ such that, for all $\de<\de_0$, the connected domain, denoted $\Om_\de$, of $H^{-1}(]-\de,\de[)$ containing $\pi^{-1}(O)$, satisfies
\be
\grad H(x)\neq 0 \text{ for all $x\in \Om_\de\setminus \pi^{-1}(O)$}.
\ee
Let us apply the divergence formula on $\Om_\de$ to the vector field $hF$:
\begin{align}\label{eq:divergence2}
\int_{\Om_\de}\grad\cdot(hF)\dx
&=\int_{\del\Om_\de}hF\cdot\dint n_{ext}.
\end{align}
Using the same method as in the proof of Lemma \ref{lem:derive}, we have
\begin{align}\nonumber
\int_{\Om_\de}\grad\cdot(hF)\dx
&=\sum_{i\in J_+(O)}\oint_{C_i(\de)}hF\cdot\frac{\grad H}{\abs{\grad H}}\dl-\sum_{i\in J_-(O)}\oint_{C_i(-\de)}hF\cdot\frac{\grad H}{\abs{\grad H}}\dl\\\label{eq:temp.2}
&=\sum_{i\in J_+(O)}b(\de,i)-\sum_{i\in J_-(O)}b(-\de,i).
\end{align}
Since $H$ is $C^1$, we get that 
\be
\int_{\Om_\de}\grad\cdot(hF)\dx\conv{\de}{0}\int_{\pi^{-1}(O)}\grad\cdot(hF)\dx=-\tht(O)\g(O).
\ee
We deduce from \eqref{eq:temp.2}
\begin{align}
\tht(O)\g(O)=\sum_{i\in J_+(O)}\bt_i(O)-\sum_{i\in J_-(O)}\bt_i(O).
\end{align}

Therefore, after simplification, the whole Dirichlet form is
\begin{align}\label{eq:DFproj}
\cE(u,v)
&=-\sum_i\int_{I_i}v_i\cro{\e a_iu''_i
-(b_i-\e a'_i)u'_i+\frac12(c_i-b'_i)u_i}\dm\\\nonumber
&\quad+\sum_{O\in\cV}\tht(O)v(O)\Bigg[\e\bigg(\sum_{i\in J_+(O)}\al_i(O)D_iu(O)-\sum_{i\in J_-(O)}\al_i(O)D_iu(O)\bigg)\\\nonumber
&\quad+u(O)\g(O)\Bigg].
\end{align}

We need to identify the projected measure  $\dint\pi_*\mu$: for $v\in C_c(\G)$, by definition, $\barr v=v\circ\pi$ is continuous with compact support, thus integrable. We get, using also the coarea formula
\begin{align}\nonumber\label{eq:projmeas}
\int_{\G}v\dint \pi_*\mu
&=\int_{\bbR^2}\barr v\dint \mu
=\sum_i\int_{A_i}\barr v h\dx+\sum_{O\in\cV_*}\int_{\pi^{-1}(O)}\barr v h\dx
\\\nonumber
&=\sum_i\int_{I_i}v_i(m)\oint_{C_i(m)}\frac{h}{\abs{\grad H}}\dl\dm
+\sum_{O\in\cV_*}v(O)\mu(\pi^{-1}(O))\\
&=\sum_i\int_{I_i}v_id_i\dm
+\sum_{O\in\cV_*}v(O)\tht(O)\abs{\pi^{-1}(O)}
\end{align}
where $d$ denotes
\be
d(m,i)
=\oint_{C_i(m)}\frac{h}{\abs{\grad H}}\dl=\tht_i(m)T_i(m).
\ee

To identify the operator $\cL$, we solve \eqref{eq:geninf} for any $v\in C^2(\G)$.
Using Equations \eqref{eq:DFproj} and \eqref{eq:projmeas}, which are valid for any test functions $v$, therefore we see that $w$ must satisfy
\begin{align}\label{eq:generator1}
w_id_i
&=\e a_iu''_i
-(b_i-\e a'_i)u'_i+\frac12(c_i-b'_i)u_i\\\label{eq:gluing1}
w(O)\tht(O)\abs{\pi^{-1}(O)}
&=\tht(O)\g(O)u(O)\\\nonumber
&\quad-\e\tht(O)\pare{\sum_{i\in J_+(O)}\al_i(O)D_iu(O)-\sum_{i\in J_-(O)}\al_i(O)D_iu(O)}.
\end{align}

Thus by definition, the domain of the operator is the set of functions $u$ such that, on each edge $I_i$, we can define $w_i=\cL_iu_i$, using Equation \eqref{eq:generator1} 

We deduce that $u\in H^2(I_i)$ for every edge $I_i$, therefore $u$ has at the boundary of each edge limits of the first derivatives.
The value of $w$ on each vertex is given by continuity. Therefore, these limits must coincide and satisfy Condition \eqref{eq:gluing1}. Then we can define $w=\cL u$.
The restriction is that $w$ must be in $L^2(\G)$
\begin{align}
\ccD(\cL)=\acc{u\in H^1(\G), \cL u\in L^2(\G)}.
\end{align}


To finish the proof we need to compute the coefficients of the generator in Equation \eqref{eq:generator1}.
Using Equation \eqref{eq:derive.2} for $a$ (given by \eqref{eq:ac}) and Equation \eqref{eq:derive.1} for $b$ (given by \eqref{eq:b}) from Lemma \ref{lem:derive}, we have
\begin{align}
a'_i(m)
&=\oint_{C_i(m)}\cro{\grad h\cdot\grad H+h\D H}\frac{\dl}{\abs{\grad H}},&
b'_i(m)
&=\oint_{C_i(m)}\grad\cdot(hF)\frac{\dl}{\abs{\grad H}}.
\end{align}
Then we see that $b'_i=c_i$ (given by Equation \eqref{eq:ac}) on each edge $I_i$.

Since $h=\barr{\tht}$, the vector field $F$ is
\be
F=e+\e\frac{\barr{\del_1\tht}}{\barr{\tht}}\grad H.
\ee
Therefore, we have, from Equations \eqref{eq:ac} and \eqref{eq:b}
\begin{align}
a_i(m)
&=\tht_i(m)\oint_{C_i(m)}\abs{\grad H}\dl\\
a'_i(m)
&=\tht_i(m)\oint_{C_i(m)}\D H\frac{\dl}{\abs{\grad H}}+\tht'_i(m)\oint_{C_i(m)}\abs{\grad H}\dl\\
b_i(m)
&=\tht_i(m)\oint_{C_i(m)}e\cdot\grad H\frac{\dl}{\abs{\grad H}}
+\e\tht'_i(m)\oint_{C_i(m)}\abs{\grad H}\dl.
\end{align}

The coefficients of the generator (Equation \eqref{eq:generator1}) could then be written
\begin{align}
\frac{a_i(m)}{d_i(m)}
&=\frac1{T_i(m)}\oint_{C_i(m)}\abs{\grad H}\dl=S_i^2(m)\\
-\frac{1}{d_i(m)}\pare{b_i(m)-\e a'_i(m)}
&=\frac1{T_i(m)}\oint_{C_i(m)}\cro{\e\D H-e\cdot\grad H}\frac{\dl}{\abs{\grad H}}\\\nonumber
&=\e B^1_i(m)+B^0_i(m)
\end{align}
which give us the result.
\end{proof}

\subsection{Local behavior}

Theorem \ref{th:gendom} is sufficient to define and describe the process $Z^\e$. However we would like to give a more intuitive description of the process. 
Note that for any vertex $O$, we have:
\begin{enumerate}
\item if $\abs{\pi^{-1}(O)}=0$ then $\g(O)=0$;
\item the coefficients $\al_i(O)$ are such that
\be\label{eq:flux}
\sum_{i\in J_+(O)}\al_i(O)=\sum_{i\in J_-(O)}\al_i(O).
\ee
\end{enumerate}

From Theorem \ref{th:gendom}, we see that on each edge, the process is a continuous diffusion whose characteristics are explicitly given as averaging along the connected level sets of the Hamiltonian $H$. However at the edges, the gluing conditions are not so clear could give several different behavior. 

Note that at the vertices two issues must be addressed in order to successfully describe the behavior of the process :
\begin{itemize}
\item is the vertex accessible and from which edges ?
\item what happend when the process reach the vertex ?
\end{itemize}

We can determine the different gluing conditions we can have. 
\begin{itemize}
\item For an exterior vertex $O$ (e.g. a vertex with only one incident edge), we get two types of boundary conditions
\begin{enumerate}
\item no gluing conditions if $\pi^{-1}(O)$ is a null measure set (i.e. a single point),
\item or the boundary condition 
\be
\abs{\pi^{-1}(O)}\cL u(0)=\g(O)u(O).
\ee
\end{enumerate}
\item For an interior vertex $O$, we could have again two types of boundary conditions (with the relation \eqref{eq:flux})
\begin{enumerate}
\item purely first order gluing condition if $\pi^{-1}(O)$ is a null measure set
\be
\sum_{i\in J_+(O)}\al_i(O)D_iu(O)-\sum_{i\in J_-(O)}\al_i(O)D_iu(O)=0,
\ee
\item mixed gluing conditions
\be
\abs{\pi^{-1}(O)}\cL u(0)=\g(O)u(O)-\sum_{i\in J_+(O)}\al_i(O)D_iu(O)+\sum_{i\in J_-(O)}\al_i(O)D_iu(O).
\ee
\end{enumerate}
\end{itemize}

Detailed analysis of such process has been conducted by several authors as Feller \cite{feller54} or Mandl \cite{mandl68}. We also refer to \cite{KKVW09} and \cite{KPS12}.

\section{Generalization}\label{sec:general}

In this last section, we present a generalization of our previous results for more general diffusions. However, we only sketch the computation of the generator of the diffusion from the limiting Dirichlet form.

Consider the diffusion defined on $\bbR^n$ by
\be\label{eq:diff.3}
\dint Y_t=\frac1{\al}v(Y_t)\dt+u(Y_t)\dt+\sqrt{2\e}\s(Y_t)\dint B_t.
\ee
The vector field $v$ plays the role of the $A\grad H$ for the $2$-dimensional case and $u$ plays the role of a friction term. Since we do not suppose that $v$ is given by some Hamiltonian, we assume instead existence of some $m$ first integrals $G=(G_1,\cdots,G_m)$ for the flow defined by $v$. 
We wish to derive the convergence in law of the process $G(Y)$ as $\al$ goes to $0$.

In this part, for a vector $Y$ or a matrix $a$, we denote by $Y^*$ or $a^*$ the transposition of these elements. In particular a scalar product between two vectors $X$ and $Y$ could be written
\be
X^*Y=X\cdot Y=\sum_iX_iY_i
\ee
where the usual matrix product takes place in the left-hand side expression. 

Let us denote $a=\s\s^*$ the diffusion matrix which is a symmetric matrix.
For the function $G: \bbR^n\to\bbR^m$, we define the matrix $DG$ by
\be
DG=\pare{
\begin{matrix}
\del_1G_1&\cdots&\del_nG_1\\
\vdots&&\vdots\\
\del_1G_m&\cdots&\del_nG_m
\end{matrix}}.
\ee

We make the following assumptions.
\begin{assumption}\label{ass.4}
We assume that $v, u$ and $\s$ are Lipschitz bounded functions and that $a$ is uniformly elliptic.. The function $G$ satisfies
\be\label{eq:vpreserved}
DG v=0.
\ee
We suppose also that $G$ has compact level sets.
We assume that there exists a $C^1$ function $h$ strictly positive, which satisfies
\begin{align}\label{eq:mupreserved}
\grad ^*[vh]&=0\\\label{eq:supermedian.2}
-\grad^*[uh]+\e\sum_{i,j}\del_{ij}(a_{ij}h)&\ieg 0.
\end{align}
\end{assumption}
We define the measure $\dint \mu=h\dx$.

\begin{remark}
The assumptions on $v$, $u$ and $\s$ ensure the existence of a strong solution to the stochastic differential equation \eqref{eq:diff.2}. Equation \eqref{eq:vpreserved} ensures that $G$ is conserved along orbits of the flow of $v$. The function $h$ is, as in the first part, the density of a measure $\mu$. Equation \eqref{eq:mupreserved} ensures that $\mu$ is also preserved by the flow generated by $v$. To define a proper Dirichlet form, $\mu$ must be supermedian (for the infinitesimal generator of the diffusion \eqref{eq:diff.3} see \cite{ma.rockner92} pp.62 and 98), Equation \eqref{eq:supermedian.2} ensures that.
\end{remark}

\subsection{Dirichlet form}

This first proposition gives the Dirichlet form for the diffusion \eqref{eq:diff.3}.

Let us denote the vector fields $\Phi, \Phi_{\al}$:
\begin{align}
\Phi&=u-\frac{\e}{h}(\grad^*(ah))^*\\
\Phi_{\al}&=\frac1{\al}v+\Phi=\frac1{\al}v+u-\frac{\e}{h}(\grad^*(ah))^*.
\end{align}
The infinitesimal generator, $L_{\al}$, could be written, for $f\in C^{2}_{c}(\bbR^n)$
\begin{align}
L_{\al}f&=\frac1{\al}v^* \grad f+u^* \grad f+\e\sum_{i,j}a_{ij}\del_{ij}f
=\frac1{\al}v^*\grad f+u^*\grad f+\e a:\grad^2 f
\end{align}
where $a:b$ denotes the Frobenius product of the two matrices and $\grad^2 f$ is the matrix of the second derivatives of $f$.

We consider the Dirichlet form $E_{\al}$ associated to $L_{\al}$ in $L^2(\mu)$. For $f,g\in C^2_c$, we have
\begin{align}\label{eq:dir}
E_{\al}(f,g)&=-\bra{L_{\al}f,g}_{\mu}
=-\frac1{\al}\int (v^*\grad f)g\dint\mu-\int (u^*\grad f)g\dint\mu -\e\int (a:\grad^2 f)g\dint\mu.
\end{align}

\begin{proposition}
Under the set of assumptions \ref{ass.4}, $E_{\al}$ is coercive, closable and can be extended to a Dirichlet form $E_{\al}$ on $L^2(\mu)$ with domain $\ccD(E_\al)=H^1(\mu)$ defined as the completion in $L^2(\mu)$ of $\ccD(L_\al)$.
Moreover $E_\al$ is regular and has the local property. The Dirichlet form is then $E_{\al}(f,g)=E^s_{\al}(f,g)+E^a_{\al}(f,g)$
\begin{align}\label{eq:DFsym.3}
E^s_{\al}(f,g)
&=\e\bra{\s^*\grad f,\s^*\grad g}_{\mu}+\frac12\int \frac1{h}\grad^*(\Phi h)fg\dint \mu\\\label{eq:DFantisym.3}
E^a_{\al}(f,g)
&=\frac12\int \Phi_{\al}^*\cro{f\grad g-g\grad f}\dint \mu
=\frac12\int \pare{\frac1{\al}v+\Phi}^*\cro{f\grad g-g\grad f}\dint \mu.
\end{align}
\end{proposition}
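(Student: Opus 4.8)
The plan is to mirror, in the $n$-dimensional setting, the two-step strategy already used for the two-dimensional case: first establish the integration-by-parts identities that rewrite $E_\al(f,g)$ in the claimed symmetric/antisymmetric form on the core $C^2_c(\bbR^n)$, and then invoke \cite{ma.rockner92} to extend this to a Dirichlet form on all of $H^1(\mu)$. For the first step I would start from \eqref{eq:dir} and handle the three terms separately. The diffusion term is treated by integration by parts: $\int (a:\grad^2 f)g\dint\mu = -\int \grad f\cdot \grad^*(agh)\dx$ (with $\grad^*(agh)$ the vector with components $\sum_j\del_j(a_{ij}gh)$), which splits into the term $-\int (\s^*\grad f)\cdot(\s^*\grad g)\dint\mu$ (using $a=\s\s^*$ and symmetry of $a$) plus a first-order term $-\int \frac1h(\grad^*(ah))^*\cdot\grad f \, g\dint\mu$; the latter is absorbed into the drift $u$ to produce $\Phi = u - \frac\e h(\grad^*(ah))^*$. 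The two remaining genuine first-order terms, $-\frac1\al\int (v^*\grad f)g\dint\mu$ and $-\int(u^*\grad f)g\dint\mu$, are handled by the elementary identity $\int G\cdot\grad f\, g\dint\mu = \frac12\int G\cdot(g\grad f - f\grad g)\dint\mu - \frac12\int \frac1h\grad^*(Gh)\, fg\dint\mu$, which is the $n$-dimensional analogue of \eqref{eq:int.2}. Applying this with $G = \frac1\al v$ and $G=u$ (after folding in the $a$-term to get $\Phi_\al = \frac1\al v + \Phi$) yields exactly \eqref{eq:DFsym.3} and \eqref{eq:DFantisym.3}, using crucially that $\grad^*(vh)=0$ (Equation \eqref{eq:mupreserved}) so that $v$ contributes nothing to the symmetric part.

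For the second step, once the decomposition holds on $C^2_c$, I would invoke the standard machinery of \cite{ma.rockner92}: the symmetric part $E^{s,1}_\al$ is positive because of the supermedian condition, which here is \eqref{eq:supermedian.2}, guaranteeing $\frac1h\grad^*(\Phi h)\le 0$ (one should check that \eqref{eq:supermedian.2} is precisely $\grad^*(\Phi h) = -\grad^*(uh) + \e\sum_{i,j}\del_{ij}(a_{ij}h) \le 0$, so this is immediate). Uniform ellipticity of $a$ together with boundedness of $\s$, $u$, $h^{-1}\grad^*(ah)$ and $h^{-1}\grad^*(\Phi h)$ gives that $(E^{s,1}_\al)^{1/2}$ is a norm equivalent to $\|\cdot\|_{H^1(\mu)}$, hence $E_\al$ is closable with closure having domain $H^1(\mu)$, and coercivity (the sector condition) follows from boundedness of $\Phi_\al$ — here I note that $v$ is bounded so $\Phi_\al$ is bounded for each fixed $\al>0$, which suffices (the constant $K$ may depend on $\al$). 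The contraction properties \eqref{eq:contractions} follow from \cite{ma.rockner92} Section II.2.d exactly as in the two-dimensional proposition, the conditions (2.16) there being consequences of \eqref{eq:supermedian.2}. Regularity is clear since $C_c\cap H^1(\mu)$ is dense in both $H^1(\mu)$ and $C_c$, and the local property is immediate from the differential form of $E_\al$.

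The main obstacle, such as it is, is bookkeeping rather than conceptual: keeping track of the matrix-valued integrations by parts (in particular that $\grad^*(agh)$ is the correct divergence and that symmetry of $a$ is what identifies the leading term as $\e\langle\s^*\grad f,\s^*\grad g\rangle_\mu$ rather than something asymmetric), and verifying that the $a$-contribution to the first-order part combines cleanly with $u$ into $\Phi$ and with $\frac1\al v$ into $\Phi_\al$. One subtlety worth flagging is that, unlike the two-dimensional case where the choice of $\mu$ made the symmetric part $\al$-independent via $\grad\cdot(hA\grad H)=0$, here we only have $\grad^*(vh)=0$, which still kills the $v$-contribution to $E^s_\al$ entirely — so $E^s_\al$ is again $\al$-independent, a fact that will be needed later for the Mosco-convergence argument but which already falls out of \eqref{eq:mupreserved} here. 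Since all the PDE estimates are uniform consequences of Assumption \ref{ass.4}, no genuinely new difficulty arises beyond what was already dealt with in Section \ref{sec:2d}.
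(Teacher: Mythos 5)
Your proposal follows essentially the same route as the paper's proof: the same integration by parts on $C^2_c(\bbR^n)$ (the paper splits the second-order term and symmetrizes it separately before collecting terms, while you fold all first-order contributions into $\Phi_\al^*\grad f\,g$ and apply the $n$-dimensional analogue of \eqref{eq:int.2} once --- the same computation in a different order), followed by the same appeal to Ma--R\"ockner (closedness of the completion, sector condition, contraction property via the supermedian condition) and the same norm-equivalence argument, now using uniform ellipticity of $a$, to identify the domain as $H^1(\mu)$. The only slip is a sign: $\grad^*(\Phi h)=\grad^*(uh)-\e\sum_{i,j}\del_{ij}(a_{ij}h)$, so \eqref{eq:supermedian.2} gives $\grad^*(\Phi h)\seg 0$ rather than $\ieg 0$; since the zeroth-order term enters \eqref{eq:DFsym.3} with a plus sign (opposite to the two-dimensional convention with $F$), it is precisely this nonnegativity that makes $E^s_\al$ positive, and with that correction your argument is complete and matches the paper's.
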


\begin{proof}
Let us first consider $f,g\in C_c^2(\bbR^n)$.
The first and second integrals in Equation \eqref{eq:dir} are treated as in \eqref{eq:int.2}. For the third integral, we get
\begin{align}\label{eq:diffusionpart}\nonumber
-\int (a:\grad^2 f)g\dint \mu&=-\sum_{ij}\int (a_{ij}\del_{ij}f)gh
=\sum_{ij}\int \del_{j}f\del_i(a_{ij}gh)\\\nonumber
&=\sum_{ij}\int a_{ij}(\del_{j}f\del_ig)h+\sum_{ij}\int \del_i(h)a_{ij}\del_{j}fg+\sum_{ij}\int \del_{j}f\del_i(a_{ij})gh\\
&=\int (\s^*\grad f)^*(\s^*\grad g)h+\int\cro{(a\grad h)^*\grad f} g+\int \cro{(\grad^* a)\grad f}gh.
\end{align}
Then we can decompose this bilinear form in $(f,g)$ in a symmetric and antisymmetric part using similar computations as in \eqref{eq:int.2}. Thus, Equation \eqref{eq:diffusionpart} becomes
\begin{align}\nonumber
-\int (a:&\grad^2 f)gh=
\bra{\s^*\grad f,\s^*\grad g}_{\mu}+\frac12\int(a\grad h+(\grad^* a)^*h)^*\cro{g\grad f- f\grad g}\\\nonumber
&\qquad-\frac12\int\cro{\grad^*(a\grad h)+h\grad^*((\grad^* a)^*)+(\grad h)^*(\grad^*a)^*}fg\\
&=\bra{\s^*\grad f,\s^*\grad g}_{\mu}+\frac12\int\cro{\grad^* (ah)}\cro{g\grad f- f\grad g}-\frac12\int\cro{\grad^2:(a h)}fg.
\end{align}

Putting all the terms together we obtain the announced Dirichlet form.

The domain $\ccD(E_\al)$ is defined as the completion of $\ccD(L_\al)$ with respect to the positive bilinear form $E_\al^{s,1}$ and it defines a coercive closed bilinear form (\cite{ma.rockner92} Theorem 2.15 p.22). The contraction properties \eqref{eq:contractions} are proved in \cite{ma.rockner92} Section II.2.d. (p.48) under suitable conditions (Equations (2.16) p.48) which come as a consequence of our assumption \eqref{eq:supermedian.2}.

The fact that the domain is $H^1(\mu)$ comes from the fact that, under the assumption \ref{ass.4}, $\Phi_\al$ and $\grad\cdot(h\Phi)$ are bounded and $a$ is uniformly elliptic. Therefore $E_\al^{s,1}$ and $\norm{\cdot}_{H^1(\mu)}$ are equivalent norms. The regularity is obvious from the properties of the sets $H^1(\mu)$ and $C_c$. The local property is also obvious from Equations \eqref{eq:DFsym.3} and \eqref{eq:DFantisym.3}.
\end{proof}

\subsection{Projected Dirichlet form}

We consider the projection $\pi$ associated to the equivalent classes defined by
\be\nonumber
x\sim y \Longleftrightarrow \text{ and $x,y$ are in the same connected component of a level set of $G$}.
\ee
The space $\G=\bbR^n/\sim$ is the new state space. However, a complete description of this state space is quite complicated and not really necessary for the Mosco-convergence.
As in the first part, for all $f$ defined on $\G$, we define $\barr{f}=f\circ \pi$, its lift on $\bbR^n$.

In order to have a Mosco-convergence of the Dirichlet form $E_\al$ as $\al\to0$, we have to make the terms in Equation \eqref{eq:DFantisym.3} depending on $\al$ vanishing, i.e. we want
\begin{align}
v^*\cro{f\grad g-g\grad f}&=0.
\end{align}
This is achieved by the choice of test functions and is equivalent to a projection of the Dirichlet form.
\begin{lemma}
For all $f\in C^1(\G)$, we have
\be
v^*\grad\barr{f}=0.
\ee
\end{lemma}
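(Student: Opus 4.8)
The plan is to observe that this lemma is simply the infinitesimal (pointwise) version of the statement that $\barr f = f\circ\pi$ is constant along the flow of $v$, which holds by construction of $\pi$ via the connected level sets of $G$. First I would recall that $\pi$ identifies two points iff they lie in the same connected component of a level set of $G$, so any function of the form $\barr f = f\circ\pi$ is constant on each connected component of each level set of $G$; in particular $\barr f$ is a (possibly very irregular) function of $G$ alone in the sense that its value at $x$ depends only on the connected level set through $x$.

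Next I would make the computation local and rigorous using the flow of $v$. Let $\varphi_t$ denote the flow generated by the vector field $v$ (which exists and is smooth by the Lipschitz boundedness assumption on $v$ in Assumption \ref{ass.4}). By \eqref{eq:vpreserved}, $DG\,v = 0$, so $\frac{d}{dt}G(\varphi_t(x)) = DG(\varphi_t(x))\,v(\varphi_t(x)) = 0$, hence $G$ is constant along each orbit $t\mapsto\varphi_t(x)$. Since each such orbit is connected and stays within a single level set of $G$, it lies entirely within one connected component of that level set, i.e.\ within one equivalence class for $\sim$. Therefore $\pi(\varphi_t(x)) = \pi(x)$ for all $t$, and consequently $\barr f(\varphi_t(x)) = f(\pi(\varphi_t(x))) = f(\pi(x)) = \barr f(x)$ for all $t$. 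Differentiating this identity at $t=0$ gives
\be
0 = \left.\frac{d}{dt}\right|_{t=0}\barr f(\varphi_t(x)) = \grad\barr f(x)\cdot v(x) = v^*\grad\barr f(x),
\ee
which is exactly the claim, valid at every $x$ where $\barr f$ is differentiable — in particular for $f\in C^1(\G)$, where $\barr f\in C^1(\bbR^n)$ on the region where $\pi$ is a submersion.

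The main obstacle is purely one of regularity and of what "$C^1(\G)$" means: $\G$ is a singular quotient space, so a priori $\barr f$ need not be differentiable across the singular fibers (those $\pi^{-1}(O)$ corresponding to critical values of $G$). I would handle this by noting that the identity $v^*\grad\barr f = 0$ only needs to hold $\mu$-almost everywhere (or on the open set $\{x : DG(x) \text{ has full rank}\}$), since that is all that is used in making the $\al$-dependent term of \eqref{eq:DFantisym.3} vanish under the integral; on that open set $\pi$ is locally a submersion onto a manifold chart of $\G$, $\barr f$ is genuinely $C^1$ there when $f\in C^1(\G)$, and the flow argument above applies verbatim. Alternatively, and perhaps more cleanly, one can argue directly without the flow: on the open set where $DG$ has full rank, $\grad\barr f$ lies in the span of $\{\grad G_1,\dots,\grad G_m\}$ (the row space of $DG$), because $\barr f$ is locally a function of $G$; pairing with $v$ and using $DG\,v = 0$, i.e.\ $\grad G_k\cdot v = 0$ for each $k$, gives $v^*\grad\barr f = 0$ immediately. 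Either route makes the proof a one-line consequence of \eqref{eq:vpreserved}.
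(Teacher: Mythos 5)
Your proof is correct and follows essentially the same route as the paper: the paper's own argument is precisely the flow argument, differentiating $\barr f(\phi_t(x))$ at $t=0$ after noting that $G$ (hence the equivalence class) is preserved along orbits of $v$ by \eqref{eq:vpreserved}. Your additional remarks on connectedness of orbits and on the regularity of $\barr f$ across singular fibers are careful refinements of points the paper passes over, but they do not change the method.
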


\begin{proof}
Let us consider the flow $\phi_t$ generated by the vector field $v$. We define, for all $x,t$
\be
F(x,t)=\barr{f}(\phi_t(x)).
\ee
By definition, we have $G(\phi_t(x))=G(x)$ for all $t$, therefore $\phi_t(x)$ stays in a unique equivalence class for all $t\in\bbR$. Then $F(x,t)$ is a constant function in $t$, thus
\be
\del_t F(x,t)=v^*\grad\barr{f}(\phi_t(x))=0, \text{ for all $t$}.
\ee
Putting $t=0$, we get the lemma.
\end{proof}

We test the Dirichlet form $E_{\al}$ on $\barr{f_1}=f_1\circ \pi, \barr{f_2}=f_2\circ \pi$. Using the previous lemma, we see that $E_{\al}(\barr{f_1},\barr{f_2})$ defines a bilinear form, for functions in $C^1_c(\G)$ which does not depend on $\al$.
We denote it $\cE$:
\begin{align}
\cE(f_1,f_2)
&=\e\int_{\bbR^n}(\grad \barr{f_1})^* a\grad \barr{f_2}\dint \mu
\\\nonumber
&\quad+\frac12\int \Phi^*\cro{\barr{f_1}\grad \barr{f_2}-\barr{f_2}\grad \barr{f_1}}\dint \mu
+\frac12\int \frac1{h}\grad^*(\Phi h)\barr{f_1}\barr{f_2}\dint \mu.
\end{align}

We have the following Proposition.
\begin{proposition}\label{prop:proDF}
The form $\cE$ define a Dirichlet form on $L^2(\G)$ with domain $H^1(\G)$. Moreover, $\cE$ is regular and has the local property. 
\end{proposition}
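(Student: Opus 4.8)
The plan is to mimic the argument given in the two-dimensional case (Theorem \ref{th:DF}), transporting every structural property of $E_\al$ on $L^2(\mu)$ through the pull-back isometry $\pi_*$ to $\cE$ on $L^2(\G)$. First I would fix the functional setup exactly as in Section \ref{sec:conv}: define $L^2(\G)=\pi_*^{-1}(L^2(\mu))$ and $H^1(\G)=\pi_*^{-1}(H^1(\mu))$ with the induced scalar products, so that $\pi_*$ is an isometric embedding of $L^2(\G)$ onto $L^2(\mu)\cap\Pi$ (and of $H^1(\G)$ onto $H^1(\mu)\cap\Pi$), where $\Pi$ is the space of functions constant on connected level sets of $G$. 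The analogue of Lemma \ref{lem:hil} — that $L^2(\G)$ and $H^1(\G)$ are Hilbert spaces with $H^1(\G)$ dense in $L^2(\G)$ — goes through verbatim: closedness comes from $\pi_*$ being an isometry, and the density argument $H^1(\G)^{\bot_{L^2(\G)}}=\overline{(H^1(\mu)^\bot\cup L^2(\G)^\bot)}\cap L^2(\G)=\{0\}$ uses only that $H^1(\mu)$ is dense in $L^2(\mu)$. This already supplies the domain $H^1(\G)$ and its density.

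Next I would check each clause of the definition of a Dirichlet form recalled before Assumption \ref{ass.2}, for $(\cE,H^1(\G))$, each time reducing to the corresponding property of $(E_\al,H^1(\mu))$ which was established in the Proposition following Assumption \ref{ass.4}. Writing $\cE^{s,1}$ and $\cE^1$ for the restrictions of $E_\al^{s,1}$, $E_\al^1$ to pull-backs: positivity of $\cE^s$ on $H^1(\G)$ is inherited from positivity of $E^s_\al$ on $H^1(\mu)$ (which uses \eqref{eq:supermedian.2}); closedness of $(\cE,H^1(\G))$ follows because $(\cE^{s,1})^{1/2}$ is equivalent to $\norm{\cdot}_{H^1(\G)}$, itself inherited from the norm equivalence $E^{s,1}_\al\sim\norm{\cdot}_{H^1(\mu)}$ granted by boundedness of $\Phi_\al$, of $\grad^*(h\Phi)$, and uniform ellipticity of $a$; coercivity of $\cE$ is inherited from coercivity of $E_\al$ by restricting the inequality $|E^1_\al(f,g)|\le K E^{s,1}_\al(f)^{1/2}E^{s,1}_\al(g)^{1/2}$ to $f=\barr u$, $g=\barr v$. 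For the contraction properties \eqref{eq:contractions} the key observation, exactly as in the proof of Theorem \ref{th:DF}, is the identity $\min(u_+,1)\circ\pi=\min((u\circ\pi)_+,1)$, so that the normal contraction of $u$ lifts to the normal contraction of $\barr u$, and the inequalities for $E_\al$ transfer directly. For the local property, if $u,v\in H^1(\G)$ have disjoint supports then $\supp\barr u\cap\supp\barr v\subset\pi^{-1}(\supp u\cap\supp v)=\vide$, so $\cE(u,v)=E_\al(\barr u,\barr v)=0$ by the local property of $E_\al$. For regularity one argues, as in Theorem \ref{th:DF}, that $C_c(\G)\cap H^1(\G)$ is $H^1(\G)$-dense — using $(C_c\cap H^1(\mu))^\bot=0$ and the same orthogonal-complement computation — and that it is uniformly dense in $C_c(\G)$ by Stone--Weierstrass (the lifted functions separate points of $\G$ since $G$ and the discrete edge-label do).

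The one genuinely new point, and the step I expect to need the most care, is establishing that $\cE$ is well defined, i.e. that $E_\al(\barr{f_1},\barr{f_2})$ really is independent of $\al$ on all of $H^1(\G)$ and not merely on $C^1_c(\G)$. On smooth compactly supported test functions this is the content of the Lemma just proved, $v^*\grad\barr f=0$, which kills the $\al^{-1}v$ term in \eqref{eq:DFantisym.3}; but to pass to $u\in H^1(\G)$ one must argue that $v^*\grad\barr u=0$ holds $\mu$-a.e. for every $u\in H^1(\G)$, not just for $C^1$ functions. I would do this by approximation: take $u_k\in C^1_c(\G)$ with $\barr{u_k}\to\barr u$ in $H^1(\mu)$ (possible by the regularity just established), note $v^*\grad\barr{u_k}=0$, and pass to the limit using boundedness of $v$ to get $v^*\grad\barr u=0$ in $L^2(\mu)$; consequently the $\al$-dependent term in $E_\al^a(\barr u,\barr v)$ vanishes identically and $\cE$ is $\al$-independent on $H^1(\G)$. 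Once this is in hand, all the inherited properties above combine to give the statement.
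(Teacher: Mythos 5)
Your proposal is correct and takes essentially the same route as the paper, whose entire proof of Proposition \ref{prop:proDF} is the single line that it ``follows exactly the proof of Theorem \ref{th:DF}''; you have simply spelled out that transfer of all the properties of $(E_\al,H^1(\mu))$ through the pull-back isometry. Your additional care about the $\al$-independence of $E_\al(\barr u,\barr v)$ on all of $H^1(\G)$ rather than just on $C^1_c(\G)$ addresses a point the paper leaves implicit, and your approximation argument using boundedness of $v$ handles it adequately.
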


\begin{remark}
This proposition allows us to define a process $Z$ on $\G$ associated to the Dirichlet form, $\cE$.
\end{remark}

\begin{proof}
The proof follows exactly the proof of Theorem \ref{th:DF}.
\end{proof}



\subsection{Mosco-convergence and convergence in law}

Let us denote $Y^{\al}$ the process solution of the diffusion equation \eqref{eq:diff.3} with $\al>0$. We denote $Z$ the process associated to the Dirichlet form $\cE$.

We have the following Theorem.
\begin{theorem}\label{th:conv.2}
For all sequence $\al_n$ converging to $0$, for all $g_n$ weakly converging in $L^2$ to $g\circ\pi$, and such that $g_n$ and $g\circ\pi$ are densities (with respect to $\mu$) of probability laws in $\bbR^n$, the sequence of processes $Z^{\al_n}=\pi(Y^{\al_n})$ where $Y^{\al_n}_0$ is distributed as $g_n\dint\mu$, converges in law to the process $Z$ with initial law given by $g\dint(\pi_*\mu)$.
\end{theorem}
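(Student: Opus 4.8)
The plan is to follow exactly the three-step scheme of Section~\ref{sec:conv}: establish the Mosco-convergence $E_{\al_n}\to\cE$, deduce from it the convergence of the finite dimensional marginals of $Z^{\al_n}=\pi(Y^{\al_n})$, and then combine this with tightness of the laws in $C(\bbR^+;\G)$ to obtain convergence in law. All three steps transpose the arguments of the two-dimensional Hamiltonian case, the only genuinely new point being the control of the geometry of $\G$ in the tightness step.

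For the Mosco-convergence I would check the two conditions of Definition~\ref{def:mosco}. By \eqref{eq:DFsym.3} the symmetric part $E^s_\al$ does not depend on $\al$, and under Assumption~\ref{ass.4} (boundedness of $h^{-1}\grad^*(\Phi h)$ together with the uniform ellipticity and boundedness of $a$) the form $E^{s,1}_\al$ is a norm equivalent to $\norm{\cdot}_{H^1(\mu)}$. Hence, if $(u_n)$ converges weakly to $u$ in $L^2(\G)$ with $\sup_n E^{s,1}_{\al_n}(u_n)<+\infty$, then $(u_n)$ is bounded in $H^1(\mu)$, a weakly convergent subsequence has limit $\barr u$, so $\barr u\in H^1(\mu)$ and $u\in H^1(\G)$; this is condition~(1). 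For condition~(2) I would use the weaker form \eqref{eq:liminf} with the constant recovery sequence $v_n=\barr v$, $v\in H^1(\G)$. As in Section~\ref{sec:conv}, a sequence $(u_k)$ bounded in $H^1(\mu)$ converging weakly in $L^2(\G)$ to $u\in H^1(\G)$ actually converges weakly to $\barr u$ in $H^1(\mu)$. Then $E^s_{\al_n}(u_k,\barr v)\to\cE^s(u,v)$ since $a$ and $h^{-1}\grad^*(\Phi h)$ are bounded; in the antisymmetric part \eqref{eq:DFantisym.3} the $\Phi$-term converges because $\Phi$ is bounded, while the $\al$-dependent term is handled by integration by parts as in \eqref{eq:int.2}:
\[
\frac1{2\al_n}\int v^*\cro{u_k\grad\barr v-\barr v\grad u_k}\dint\mu
=\frac1{\al_n}\int\pare{v^*\grad\barr v}u_k\dint\mu
+\frac1{2\al_n}\int\frac1h\grad^*(hv)\,u_k\barr v\dint\mu,
\]
which vanishes identically because $v^*\grad\barr v=0$ (the lemma on lifted functions, applied to $\barr v$) and $\grad^*(vh)=0$ by \eqref{eq:mupreserved}. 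Thus $E_{\al_n}(u_k,\barr v)\to\cE(u,v)$, so \eqref{eq:liminf} holds with equality.

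The second step is then immediate: by the theorem of Hino and T\"olle (\cite{hino98,toelle06}) quoted in Section~\ref{sec:conv}, Mosco-convergence gives strong convergence of the $C_0$-semigroups, $T^n_tf_n\to\barr{T_tf}$ in $L^2(\mu)$ whenever $f_n\to\barr f$ strongly, and one repeats the computation of Proposition~\ref{prop:fdimcv}: for $0<t_1<\dots<t_N$ and $f_1,\dots,f_N\in C_b(\G)$ the Markov property of $Y^{\al_n}$ gives
\[
\bbE_{\nu_n}\cro{f_1(Z^{\al_n}_{t_1})\cdots f_N(Z^{\al_n}_{t_N})}
=\int_{\bbR^n}T^n_{t_1}\pare{\barr{f_1}\,T^n_{t_2-t_1}\pare{\barr{f_2}\cdots}}g_n\dint\mu,
\]
and since $g_n$ converges weakly in $L^2(\mu)$ to $\barr g$ while the integrand converges strongly in $L^2(\mu)$ to $\barr{T_{t_1}(f_1T_{t_2-t_1}(f_2\cdots))}$ by iterating the strong semigroup convergence, the right-hand side tends to $\bbE_\nu[f_1(Z_{t_1})\cdots f_N(Z_{t_N})]$ with $\nu=g\dint(\pi_*\mu)$; that $\nu_n\to\nu$ weakly on $\G$ follows from the weak $L^2$-convergence of $g_n$ to $\barr g$ and the fact that both are probability densities.

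Finally I would establish tightness of the laws of $(Z^{\al_n})$ in $C(\bbR^+;\G)$, where $\G$ carries the quotient topology metrized by some $\rho$, and conclude by Prokhorov together with the second step: every subsequential limit has the finite dimensional distributions of $Z$, which---$Z$ being a continuous process---determine its law, so the whole sequence converges in law to $Z$. For tightness, the crucial observation is $DG\,v=0$, which makes It\^o's formula for $G_k(Y^{\al_n})$ produce an equation whose drift and diffusion coefficients do not involve $\al$; combined with the compactness of the level sets of $G$ and the boundedness of $u,\s$, this gives, for every $T,\de>0$, a compact $K\subset\G$ and an $n_0$ with $\bbP[Z^{\al_n}_t\in K\text{ for all }t\ieg T]\seg1-\de$ whenever $n\seg n_0$. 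The equicontinuity estimate would be obtained, as in \cite{stroock.varadhan79} and Chapter~8 of \cite{freidlin.wentzell12}, by exhibiting for each $a$ in a compact of $\G$ and each small $\de$ a function $f^a_\de$ on $\G$ with $f^a_\de(a)=1$, $f^a_\de\equiv0$ outside the $\de$-ball around $a$, $0\ieg f^a_\de\ieg1$, such that $f^a_\de(Z^{\al_n}_t)+A_\de t$ is a submartingale uniformly in $n$, the point being that the lift $\barr{f^a_\de}$ can be chosen so that $L_{\al_n}\barr{f^a_\de}$ is bounded below uniformly in $\al$ (again because $DG\,v=0$ suppresses the fast drift). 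The main obstacle is exactly this last step: since $\G$ is no longer the explicit graph of the two-dimensional case, the construction of $f^a_\de$ and the uniform lower bound on $L_{\al_n}\barr{f^a_\de}$ near the singular strata of $\G$ require non-degeneracy hypotheses on $G$ whose precise formulation is beyond the scope of this paper, which is why the argument is only sketched here.
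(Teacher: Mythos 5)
Your first two steps coincide with the paper's own argument: Proposition \ref{prop:Mconv} is indeed proved by transposing the two-dimensional Mosco proof exactly as you do (the $\al$-dependent term of \eqref{eq:DFantisym.3} is annihilated by $v^*\grad\barr v=0$ together with $\grad^*(vh)=0$ from \eqref{eq:mupreserved}), and the convergence of finite dimensional marginals is obtained by repeating the semigroup computation of Proposition \ref{prop:fdimcv} verbatim, as in your second paragraph.

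The divergence, and the one genuine gap, is in the tightness step. You attempt to run the Stroock--Varadhan scheme directly on $\G$, constructing barrier functions $f^a_\de$ on the stratified orbit space, and you rightly observe that near the singular strata this would require additional non-degeneracy hypotheses on $G$; as written, your proof is therefore incomplete at exactly that point. The paper (Proposition \ref{prop:tight.2}) sidesteps this entirely by a componentwise reduction: each real-valued process $G_i(Y^{\al_n})$ is tight by the same Freidlin--Wentzell argument as in Proposition \ref{prop:tight} (It\^o's formula for $G_i(Y^{\al_n})$ has coefficients independent of $\al$ because $DG\,v=0$, and these are bounded under Assumption \ref{ass.4}); tightness of the $\bbR^m$-valued process $G(Y^{\al_n})$ then follows from tightness of its components via Corollary 3.33 p.317 of \cite{jacod.shiryaev}; finally tightness of $Z^{\al_n}=\pi(Y^{\al_n})$ in $C(\bbR^+;\G)$ is deduced from the fact that every bounded continuous function on $\G$ can be decomposed as a countable sum of bounded continuous functions depending only on $G$. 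In other words, no barrier function on $\G$ and no analysis near the singular strata is needed: replacing your last paragraph by this reduction closes the gap and brings your argument in line with the paper (whose own transfer-to-$\G$ step is, admittedly, also only sketched).
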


The proof follows exactly as in the first part (Theorem \ref{th:conv}) by proving two facts: the convergence of the finite dimensional marginals of the processes $(Z^{\al_n})$ (which follows itself form the convergence of the associated Dirichlet forms), and the tightness.

\begin{proposition}\label{prop:Mconv}
For all sequence $(\al_n)$ converging to $0$, $E_{\al_n}$ Mosco-converges to $\cE$.
\end{proposition}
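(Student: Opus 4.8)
The plan is to mimic the proof of the Mosco-convergence theorem from the two-dimensional case, checking the two conditions of Definition \ref{def:mosco} for the sequence $E_{\al_n}$ on $L^2(\mu)$ converging to $\cE$ on $L^2(\G)$. The key structural facts that make this work are: (i) the symmetric part $E^{s,1}_{\al}$ does not depend on $\al$, because by Assumption \ref{ass.4} (Equation \eqref{eq:mupreserved}) the term $\tfrac1{2\al}\int h^{-1}\grad^*(vh)fg\dint\mu$ in the symmetric decomposition vanishes identically; and (ii) under Assumption \ref{ass.4}, since $a$ is uniformly elliptic and $\Phi_\al$, $h^{-1}\grad^*(h\Phi)$ are bounded, $E^{s,1}_{\al}$ is a norm equivalent to $\norm{\cdot}_{H^1(\mu)}$. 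These are exactly the ingredients used in the $\bbR^2$ case.

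For the first condition, I would take a sequence $(w_n)$ in $L^2(\mu)$ weakly converging to $w\in L^2(\G)$ with $\sup_n E^{s,1}_{\al_n}(w_n)<+\infty$. By the norm equivalence in (ii), $(w_n)$ is bounded in $H^1(\mu)$, so a subsequence converges weakly in $H^1(\mu)$; the weak limit in $H^1(\mu)$ must agree with the weak $L^2(\mu)$-limit $\barr w=w\circ\pi$, hence $\barr w\in H^1(\mu)$, i.e. $w\in H^1(\G)$. This argument is verbatim the one in the two-dimensional case and uses only reflexivity of $H^1(\mu)$ and uniqueness of weak limits.

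For the second condition, I would verify the weaker liminf-form (Equation \eqref{eq:liminf}): given a subsequence $(\al_{n_k})$, a sequence $(u_k)$ weakly convergent in $L^2(\mu)$ to $u\in H^1(\G)$ with $\sup_k E^{s,1}_{\al_{n_k}}(u_k)<+\infty$, and any $v\in H^1(\G)$, take the constant recovery sequence $v_k=\barr v=v\circ\pi$. As above, boundedness in $H^1(\mu)$ forces $u_k\rightharpoonup\barr u$ weakly in $H^1(\mu)$. The symmetric part $E^{s}_{\al_{n_k}}(u_k,\barr v)\to\cE^s(u,v)$ since $E^s$ is $\al$-independent and $h^{-1}\grad^*(h\Phi)$ is bounded (this is a bilinear form continuous for weak $H^1$-convergence in the first slot against a fixed element). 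For the antisymmetric part, the only $\al$-dependent term is $\tfrac1{2\al_{n_k}}\int v^*(u_k\grad\barr v-\barr v\grad u_k)\dint\mu$; using the integration-by-parts identity analogous to \eqref{eq:int.2}, this equals $\tfrac1{\al_{n_k}}\int (v^*\grad\barr v)\,u_k\dint\mu+\tfrac1{2\al_{n_k}}\int h^{-1}\grad^*(vh)\,u_k\barr v\dint\mu$, and both terms vanish: the first because $v^*\grad\barr v=0$ by the Lemma just proved ($\grad\barr v$ is annihilated by $v$), the second because $\grad^*(vh)=0$ by \eqref{eq:mupreserved}. The remaining antisymmetric term $\tfrac12\int\Phi^*(u_k\grad\barr v-\barr v\grad u_k)\dint\mu$ converges to its $\cE$-counterpart by weak $H^1$-convergence of $u_k$ and boundedness of $\Phi$. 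Hence $E_{\al_{n_k}}(u_k,\barr v)\to\cE(u,v)$, which gives the required liminf (in fact an equality).

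The main obstacle, such as it is, lies in the antisymmetric part: one must be careful that the $\tfrac1\al$-singular term genuinely disappears and not merely stays bounded. The two cancellations $v^*\grad\barr v=0$ and $\grad^*(vh)=0$ are precisely Assumption \ref{ass.4}'s Equations \eqref{eq:vpreserved} (via the projection lemma) and \eqref{eq:mupreserved}; once these are invoked the singular term is identically zero, not just convergent, so no delicate compensation is needed. Everything else is a transcription of the two-dimensional argument, with $A\grad H$ replaced by $v$, $F$ replaced by $\Phi$, and $\abs{\grad H}^2\,\del_1 u\,\del_1 v$ replaced by $(\grad\barr u)^*a\grad\barr v$.
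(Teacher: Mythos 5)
Your proposal is correct and is essentially the paper's own argument: the paper proves this proposition simply by invoking the two-dimensional Mosco-convergence proof, and your write-up is exactly that transposition, with the two key cancellations of the $\tfrac1{\al}$-term coming from $v^*\grad\barr v=0$ (the projection lemma, i.e.\ \eqref{eq:vpreserved}) and $\grad^*(vh)=0$ (Equation \eqref{eq:mupreserved}), together with the $\al$-independence and $H^1(\mu)$-equivalence of $E^{s,1}_\al$. You in fact supply more detail than the paper does, but the route is the same.
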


\begin{proof}
The proof follows exactly the proof of Theorem \ref{th:DF}.
\end{proof}

We consider $Z$ the Markov process on $\G$ associated to the Dirichlet form $\cE$ with initial law $\nu$.

\begin{proposition}[Tightness]\label{prop:tight.2}
The family of distributions of $(Z^{\al_n})_{n}$ is tight in the space $C(\bbR^+; \G)$.
\end{proposition}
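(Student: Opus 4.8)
The plan is to reproduce, in the present setting, the argument behind Proposition~\ref{prop:tight}, with the $\bbR^m$-valued map $G$ playing the role that $H$ played in the two-dimensional case. The structural fact that makes this work is Equation~\eqref{eq:vpreserved}: since $DG\,v=0$, Itô's formula applied to $G(Y^{\al_n}_t)$ annihilates the fast drift $\tfrac1{\al_n}v$ and gives
\be\label{eq:itoG}
\dint G(Y^{\al_n}_t)=\pare{DG\,u}(Y^{\al_n}_t)\dt+\e\sum_{i,j}\pare{a_{ij}\del_{ij}G}(Y^{\al_n}_t)\dt+\sqrt{2\e}\pare{DG\,\s}(Y^{\al_n}_t)\dint B_t,
\ee
an SDE whose coefficients are bounded uniformly in $n$ (using Assumption~\ref{ass.4}, together with $C^2$-regularity of $G$ with bounded second derivatives, exactly as was assumed for $H$ in Section~\ref{sec:2d}). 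So the processes $G(Y^{\al_n})$ are Itô diffusions in $\bbR^m$ with coefficients bounded independently of $n$, and it remains to transfer their tightness to $C(\bbR^+;\G)$ through $\pi$.

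Two ingredients suffice, exactly as in the Freidlin--Wentzell lemma quoted in Proposition~\ref{prop:tight}. \textbf{Compact containment.} The sets $K_R=\pi\pare{\acc{x\in\bbR^n:\ \abs{G(x)}\ieg R}}$ are compact in $\G$, since $G$ has compact level sets (Assumption~\ref{ass.4}) and $\pi$ is continuous, and they exhaust $\G$. Applying Itô's formula to $\abs{G(Y^{\al_n}_t)}^2$ in \eqref{eq:itoG} and combining Doob's maximal inequality with Gronwall's lemma yields, uniformly in $n$ and in the starting point over compact sets,
\be\nonumber
\bbP_x\cro{\max_{0\ieg t\ieg T}\abs{G(Y^{\al_n}_t)}\seg R}\ieg\frac{C(T)\pare{1+\abs{G(x)}^2}}{R^2}.
\ee
Since the initial laws $\nu_n$ converge weakly, they are tight, so for every $T,\de>0$ there is a compact $K\subset\G$ with $\bbP_{\nu_n}\cro{Z^{\al_n}_t\in K\ \text{for all}\ t\ieg T}\seg1-\de$ for every $n$.

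\textbf{Equicontinuity.} Here one follows Stroock--Varadhan verbatim, as recalled after Proposition~\ref{prop:tight}: for a compact $K\subset\G$ and $\de>0$ small, and each $a\in K$, construct $f^a_\de:\G\to[0,1]$ with $f^a_\de(a)=1$, $f^a_\de\equiv0$ off the $\rho$-ball of radius $\de$ about $a$, chosen so that its lift $\barr{f^a_\de}=f^a_\de\circ\pi$ is $C^2$ on $\bbR^n$ and $L_{\al_n}\barr{f^a_\de}\seg-A_\de$ for all $n$; the $\tfrac1{\al_n}v$ contribution to $L_{\al_n}\barr{f^a_\de}$ vanishes by the lemma preceding Proposition~\ref{prop:proDF}, because $\barr{f^a_\de}$ is constant on connected level sets of $G$, so $A_\de$ does not depend on $n$. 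Consequently $f^a_\de(Z^{\al_n}_t)+A_\de t$ is a submartingale for every $n$, and the usual optional-stopping estimate controls the probability that $Z^{\al_n}$ leaves the $\rho$-ball about $a$ in a short time interval; together with compact containment and the Ascoli--Arz\'ela criterion this gives tightness of $(Z^{\al_n})$ in $C(\bbR^+;\G)$.

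\textbf{Main obstacle.} The one genuinely delicate point is the construction of the cut-off functions $f^a_\de$ directly on the quotient $\G$ together with the lower bound on $L_{\al_n}\barr{f^a_\de}$ uniform in $n$. Near a vertex --- a connected level set of $G$ through a critical point, which may carry positive Lebesgue measure and have a complicated geometry --- the lift $\barr{f^a_\de}$ must remain $C^2$ across $\pi^{-1}(O)$ with second derivatives controlled uniformly, and this is precisely the detailed structural information on $\G$ that the paper deliberately does not develop in the general $n$-dimensional case. As in Section~\ref{sec:2d}, where the bounded-second-derivative hypothesis on $H$ was used for exactly this purpose, one must either impose analogous regularity on $G$ near its critical set or invoke the corresponding Freidlin--Wentzell estimate; this is why the argument here is only sketched.
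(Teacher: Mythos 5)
Your opening observation is correct and is indeed the key structural fact: since $DG\,v=0$, It\^o's formula applied to $G(Y^{\al_n})$ eliminates the fast drift, so the coordinate processes $G_i(Y^{\al_n})$ are It\^o diffusions with coefficients bounded uniformly in $n$. But your equicontinuity step is a genuine gap, and you flag it yourself: to run the Stroock--Varadhan argument directly on the quotient you need a metric $\rho$ on $\G$ (which is not defined in the general setting, where $\G$ carries only the quotient topology) and, for each $a$, cut-off functions $f^a_\de$ whose lifts $f^a_\de\circ\pi$ are $C^2$ on $\bbR^n$ with $L_{\al_n}(f^a_\de\circ\pi)$ bounded below uniformly in $n$. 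Near a set $\pi^{-1}(O)$ containing critical points of $G$ this construction requires exactly the detailed local structure of $\G$ that is unavailable here, and an argument whose central construction is declared out of reach is not a proof of the proposition.

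The paper's proof avoids the quotient entirely and is the route you should take. One first proves, by the same argument as in Proposition \ref{prop:tight}, that each real-valued family $G_i(Y^{\al_n})$ is tight in $C(\bbR^+;\bbR)$ (this uses only the uniform boundedness of the coefficients you derived, not any geometry of $\G$); then Corollary 3.33, p.~317 of \cite{jacod.shiryaev} upgrades componentwise tightness to tightness of the vector-valued family $G(Y^{\al_n})$ in $C(\bbR^+;\bbR^m)$; finally, tightness of $Z^{\al_n}=\pi(Y^{\al_n})$ in $C(\bbR^+;\G)$ follows because every bounded continuous function on $\G$ can be decomposed as a countable sum of bounded continuous functions depending only on $G$, so the laws of $Z^{\al_n}$ are determined by, and inherit tightness from, those of $G(Y^{\al_n})$. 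This reduction needs no metric on $\G$, no cut-off functions on the quotient, and no analysis near the vertices --- precisely the ingredients your version cannot supply. What your sketch buys instead (a direct Freidlin--Wentzell argument on $\G$) would only become available under additional structural hypotheses on $G$ near its critical set, which the paper deliberately does not impose.
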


\begin{proof}
By the same proof of Proposition \ref{prop:tight}, we prove that each family of processes $G_i(Y^{\al_n})$ is tight, thus, using Corollary 3.33 p.317 in \cite{jacod.shiryaev} the family of processes $G(Y^{\al_n})$ is tight. Therefore $Z^{\al_n}$ is also tight since all bounded continuous function on $\G$ could be decomposed as a countable sum of bounded continuous functions depending only on $G$.
\end{proof}

\begin{proof}[Proof of Theorem \ref{th:conv.2}]
From Proposition \ref{prop:Mconv}, we deduce as in the proof of Proposition \ref{prop:fdimcv} the convergence of the finite dimensional marginals.
The theorem follows from the tightness proved in Proposition \ref{prop:tight.2}.
\end{proof}

\subsection{Identification of the limiting process}

The process $Z$ is only defined through the Dirichlet form $\cE$. In order to obtain a more intuitive representation of this process, through a stochastic diffusion equation for example or its infinitesimal generator, $\cL_\e$, one should write the Dirichlet form as a scalar product in $L^2(\G)$. In this last section, we would like to expose what one should expect and how computations could be made.

The equation is therefore the following, we wish to find $w\in L^2(\G)$ such that, for all $v\in H^1(\G)$
\begin{align}\label{eq:DF.25}\nonumber
\cE(u,v)&=\e\int_{\bbR^n}(\grad \barr{u})^* a\grad \barr{v}\dint \mu
+\frac12\int \Phi^*\cro{\barr{u}\grad \barr{v}-\barr{v}\grad \barr{u}}\dint \mu
+\frac12\int \frac1{h}\grad^*(\Phi h)\barr{u}\barr{v}\dint \mu\\
&=-\int_{\G}wv\dint \pi_*\mu.
\end{align}
In order to do this identification, one has to do several things:
\begin{itemize}
\item write the Dirichlet form $\cE$ as an integral on $\G$;
\item write the image measure of $\mu$ via the projection $\pi$ on $\G$.
\end{itemize}

\subsubsection{Orbit Space}
As in the two dimensional case, $\G$ is decomposed into several connected submanifolds of $\bbR^m$. This decomposition is done by using the minimal rank of the Jacobian $JG$ along a connected level set of $G$. This rank is the dimension of the submanifold. Each submanifold of dimension higher than one has a boundary made of a union of submanifolds of dimension strictly lower. We describe $\G$ by this collection of manifolds ordered along their dimensions and by the relations describing their boundaries.

We denote $I^k_i$ the collection of submanifolds of dimension $k$ and $R^k_i$ the preimage associated to the submanifold $I^k_i$.

\subsubsection{Image measure}

On each submanifold, we need to do a change of variable to transfer an integral on the preimage of a submanifold $R^k_i$ (which is a domain in $\bbR^n$) to a integral on this submanifold $I^k_i$ (a domain in $\bbR^m$).

We denote $\dint\cH_k$ for $k\seg 0$, the $k$-dimensional Hausdorff measure.


For $k=m$, this change of variable is just the coarea formula.
The integrals on subsets of rank strictly lower than $m$ must be treated carefully. In these cases, we need a coarea formula from a domain $R^k_i$ to $I_i^k$, a submanifold in $\bbR^m$ of dimension $k$. 

We introduce a $k$-dimensional Jacobian $J_kG$ (\cite{krantz.parks08})
\be
J_kG(y)=\sup\acc{\frac{\cH_k(DG(y)P)}{\cH_k(P)}, \text{ for $P$ a $k$-dimensional parallelepiped in $\bbR^n$}}.
\ee
Remark that for the usual Jacobian we have $JG=J_mG$.
We obtain
\begin{align}
\int_{R^k_i}f(x)\dx
=\int_{I_i^k}\dint \cH_k(g)\int_{G^{-1}(g)\cap R_i^k}\frac{f(y)}{J_kG(y)}\dint \cH_{n-k}(y).
\end{align}

This allows us to make the change of variable (and also identify the image measure $\pi_*\mu$ on $\G$). Let us choose a function $f$ integrable (w.r.t. $\mu$) on $\bbR^n$ and then, for $p=(g,k,i)$ we have
\begin{align}
\int_{\bbR^n}f(x)\dx=\sum_{k=0}^m\sum_i\int_{I^k_i}\bar f(p)\dint\cH_k(p).
\end{align}
The average function $\bar f$ is defined on $\G$, for $g\in R^k_i$, by 
\be
\bar f(g,k,i)=\int_{G^{-1}(g)\cap R^k_i}\frac{f(x)}{J_kG(x)}\dint \cH_{n-k}(x).
\ee


\subsubsection{Projected Dirichlet form}

We have to decompose all the integrals of the left-hand side of Equation \eqref{eq:DF.25} and apply the coarea formula, in order to obtain integrals on the components of $\G$.
For the first integral of Equation \eqref{eq:DF.25}, we obtain
\begin{align}\label{eq:inte.1}
\int_{\bbR^n}\grad \barr{u}^* a\grad \barr{v}\dint \mu
&=\sum_{k\ieg m}\sum_i\int_{I_i^k}\dint \cH_k(g)\int_{G^{-1}(g)\cap R^{k}_i}\grad \barr{u}(x)^* a(x)\grad \barr{v}(x)h(x)\frac{\dint \cH_{n-k}(x)}{J_kG(x)}.
\end{align}
On the set $R^{k}_i$, we have $\barr u(x)=u(G(x),k,i)$, then let us denote $D_gu$ the derivative of $u$ with respect to the first variable. Note that this derivative has value in the respective tangent space of the submanifold i.e. $\bbR^k$. This is a derivative along the submanifold $I_i^k$ in $\bbR^m$.
If we omit the constant quantities $(k,i)$, we get
\begin{align}
(\grad \barr u)^* a\grad \barr v
&=D_gu^*(DGaDG^*)D_gv
\end{align}
Note that since $a=\s\s^*$, the matrix $DGaDG^*=(DG\s)(DG\s)^*$ is also symmetric and non-negative.
Let us also define the average quantity $a_G$ defined on $\G$
\be
a_G((g,k,i))=\int_{G^{-1}(g)\cap R^{k}_i}\frac{(DGaDG^*)h}{J_kG}\dint \cH_{n-k}.
\ee
The integral \eqref{eq:inte.1} becomes
\begin{align}
\int_{\bbR^n}(\grad \barr{u})^* a\grad \barr{v}\dint \mu
&=\sum_{k\ieg m}\sum_i\int_{I_i^k}(D_gu)^* a_GD_gv\dint \cH_k
=\int_{\G}(D_gu)^* a_GD_gv.
\end{align}

The second and third integrals in \eqref{eq:DF.25} are treated accordingly. We denote, for $p=(g,k,i)$
\begin{align}
\Phi_{G}(p)&=\int_{G^{-1}(g)\cap R^{k}_i}\frac{DG\Phi h}{J_kG}\dint \cH_{n-k}\\
F_{G}(p)&=\int_{G^{-1}(g)\cap R^{k}_i}\frac{\grad^*(\Phi h)}{J_kG}\dint \cH_{n-k}.
\end{align}

Therefore, we get the following formulation for the projected Dirichlet form
\begin{align}\label{eq:DF.26}
\cE(u,v)
&=\e\int_{\G}(D_gu)^* a_G D_gv
+\frac12\int_{\G} \Phi_G^*\cro{uD_gv-vD_gu}
+\frac12\int_\G F_{G}uv.
\end{align}

\subsubsection{Identification of the infinitesimal generator}

In order to solve Equation \eqref{eq:DF.25}, we need to write the integrals in Equation \eqref{eq:DF.26} where $v$ does not have any derivative. This can be obtained, at least formally, via some integrations by parts .

%
%

We need to identify the integrals for each $I^k_i$. We only have boundary terms if $k<m$, therefore let us first give the generator on the sets $I^m_j$. From Equation \eqref{eq:DF.26}, 
we obtain on each $I^m_j$,
\be\label{eq:gene}
\cL_\e u=\frac1{d}
\cro{\e D^*_g(a_G D_g u)+(\Phi_{G})^*D_gu+\frac12\pare{D^*_g\Phi_{G}-F_G}u}.
\ee
Note that since all sets $I_i^k$, for $k<m$, are contained in the boundary of a domain $I_j^m$.

Collecting the boundary term along every submanifold gives gluing conditions, this is the same process as the $2$-dimensional case. Heuristically, these gluing conditions connect the value of the generator restricted on a submanifold $I^k_i$ to the limiting values of the generator at the boundary of the submanifolds $I^{k+1}_j$ such that $\del I^{k+1}_j\subset I^k_i$.

\begin{remark}
For an explicit computation of the generator, defined by Equation \eqref{eq:gene}, one would need a lemma analogous to the Lemma \ref{lem:derive} to compute the derivatives of the  averaging $a_G$ and $\Phi_G$.
\end{remark}

\end{document}